\numberwithin{equation}{section}
\newtheorem{theorem}{Theorem}[section]
\newtheorem{lemma}[theorem]{Lemma}
\newtheorem{corollary}[theorem]{Corollary}
\newtheorem{proposition}[theorem]{Proposition}
\newtheorem{remark}[theorem]{Remark}
\theoremstyle{definition}
\newtheorem{definition}[theorem]{Definition}
\newtheorem*{fact*}{Fact}
\newtheorem*{fact1}{Fact 1}
\newtheorem*{fact2}{Fact 2}
\newtheorem*{claim*}{Claim}
\theoremstyle{remark}
\newtheorem*{remark*}{Remark}
\newtheorem*{remarks*}{Remarks}
\newtheorem*{rem1}{Remark 1}
\newtheorem*{rem2}{Remark 2}
\newtheorem*{example*}{Example}
\newtheorem*{ex1}{Example 1}
\newtheorem*{ex2}{Example 2}
\newcommand{\veps}{\varepsilon}
\newcommand{\ov}[1]{\overline{#1}}
\def\RR{\mathbb R}
\def\NN{\mathbb N}
\def\TT{\mathbb T}
\def\DD{\mathbb D}
\def\CC{\mathbb C}
\def\ZZ{\mathbb Z}
\def\diam{\operatorname{diam}}
\def\supp{\operatorname{supp}}
\begin{document}
\title[Mixing operators and small subsets of the circle]{Mixing operators\\ and small subsets of the circle}

\author{Fr\'ed\'eric Bayart}
\author{\'Etienne Matheron}
\address{Clermont Universit\'e, Universit\'e Blaise Pascal, Laboratoire de Math\'ematiques, BP 10448, F-63000 Clermont-Ferrand -
CNRS, UMR 6620, Laboratoire de Math\'ematiques, F-63177 Aubi\`ere.}
\email{Frederic.Bayart@math.univ-bpclermont.fr}
\address{Laboratoire de Math\'ematiques de Lens, Universit\'e d'Artois, Rue Jean Souvraz S. P. 18, 62307 Lens.}
\email{etienne.matheron@euler.univ-artois.fr}
\subjclass[2000]{47A16, 37A25}
\keywords{Hypercyclic operators, Gaussian measures, mixing, unimodular eigenvalues, countable sets, sets of extended uniqueness, cotype 2}

\begin{abstract} We provide complete characterizations, on Banach spaces with cotype 2, of those linear operators which happen to be weakly mixing or strongly mixing transformations 
with respect to some nondegenerate Gaussian measure. These characterizations involve two families of small subsets of the circle: the countable sets, and the 
so-called \emph{sets of uniqueness} for Fourier-Stieltjes series. The most interesting part, \mbox{i.e.} the sufficient conditions for weak and strong mixing, is valid on an arbitrary 
(complex, separable) Fr\'echet space.
\end{abstract}

\maketitle
\section{Introduction}

A basic problem in topological dynamics is to determine whether a given continuous map $T:X\to X$ acting on a topological space $X$ admits an ergodic probability measure. One may also ask for stronger ergodicity properties 
such as weak mixing or strong mixing,  
and put additional constraints on the measure $\mu$, for example that $\mu$ should have no discrete part, or that it should belong to some natural class of measures related to the structure of the underlying space $X$. 
Especially significant is the requirement that $\mu$ should have \emph{full support} (\mbox{i.e.} $\mu (V)>0$ for every open set $V\neq\emptyset$) since in this case 
any ergodicity property implies its topological counterpart. There is, of course, a huge literature on these matters since the classical work of Oxtoby and Ulam (\cite{OU}).

\smallskip
 In recent years, the above problem has received a lot of attention in the specific setting of \emph{linear} dynamics, 
\mbox{i.e.} when the transformation $T$ is a continuous linear operator acting on a topological vector space $X$ 
(\cite{F}, \cite{BG2}, \cite{BG3}, \cite{BoGE}, \cite{Sophie}). The main reason is that people working in linear dynamics 
are mostly interested in studying \emph{hypercyclic} operators, \mbox{i.e.} operators having dense orbits. When the space $X$ 
is second-countable, it is very easy to see that if a continuous map $T:X\to X$ happens to be ergodic 
with respect to some Borel probability measure $\mu$ with full support, then almost every $x\in X$ (relative to $\mu$) has a dense $T$-orbit. (In fact, one can say more: it follows 
from Birkhoff's ergodic theorem that almost all $T$-orbits visit every non-empty open set along a set of integers having positive lower density. In the linear setting, an operator 
having at least one orbit with that property is said to be 
\emph{frequently hypercyclic}. This notion was introduced in \cite{BG3} and extensively studied since then; see \mbox{e.g.} the books \cite{BM} and 
\cite{GP} for more information). 
Hence, to find an ergodic measure with full support is an efficient way of showing that a given operator is hypercyclic, which comes as a measure-theoretic
counterpart to the more traditional Baire category approach. 

\smallskip
Throughout the paper, we shall restrict ourselves to the best understood infinite-dimensional measures, the so-called \emph{Gaussian} measures. Moreover, the underlying topological vector space $X$ will always be
a complex separable Fr\'echet space. (The reason for considering \emph{complex} spaces only will become clear in the next few lines). In this setting, a Borel probability measure on $X$ is Gaussian if and only if 
it is the distribution of an almost surely convergent random series of the form $\xi=\sum_0^\infty g_n x_n$, where $(x_n)\subset X$ and $(g_n)$ is a sequence of independent, standard complex Gaussian variables. 
  Given any property (P) relative to measure-preserving transformations, we shall say 
that an operator $T\in\mathfrak L(X)$ has property (P) \emph{in the Gaussian sense} if there exists some Gaussian probability measure $\mu$ on $X$ with full support with respect to which 
$T$ has (P). 

\smallskip
The problem of determining which operators are ergodic in the Gaussian sense was investigated by E. Flytzanis (\cite{F}), in a Hilbert space setting. The fundamental idea of \cite{F} is that one has to look at the
\emph{$\TT$-eigenvectors} of the operator, \mbox{i.e.} the eigenvectors associated with eigenvalues of modulus $1$: roughly speaking, ergodicity is equivalent to the existence of ``sufficiently many $\TT$-eigenvectors 
and eigenvalues".  This is of course to be compared with the now classical eigenvalue criterion for hypercyclicity found by G. Godefroy and J. Shapiro (\cite{GS}), which says in essence that an operator having 
enough eigenvalues inside and outside the unit circle must be hypercyclic. 

The importance of the $\TT$-eigenvectors is easy to explain. Indeed, it is almost trivial that if $T\in\mathfrak L(X)$ is an operator whose $\TT$-eigenvectors span a dense subspace of $X$, then $T$ admits an invariant Gaussian measure with full support: choose a sequence of $\TT$-eigenvectors $(x_n)_{n\geq 0}$ (say $T(x_n)=\lambda_n x_n$) with dense linear span such that $\sum_0^\infty\Vert x_n\Vert<\infty$ for every continuous semi-norm $\Vert\,\cdot\,\Vert$ on $X$, and let $\mu$ be the distribution of the random variable $\xi=\sum_0^\infty g_n x_n$. That $\mu$ is $T$-invariant follows from the linearity of $T$ and the rotational invariance of the Gaussian variables $g_n$ ($\mu\circ T^{-1}\sim\sum_0^\infty g_n T(x_n)=\sum_0^\infty (\lambda_ng_n)\, x_n\sim\sum_0^\infty g_n x_n=\mu$). However, this particular measure $\mu$ cannot be ergodic (\cite{Sophie}).

\smallskip
Building on Flytzanis' ideas, the first named author and S. Grivaux came rather close to characterizing 
 the weak and strong mixing properties for Banach space operators
in terms of the $\TT$-eigenvectors (\cite{BG3}, \cite{BG2}). However, this was not quite the end of the story because the sufficient conditions for weak or strong mixing found in \cite{BG3} and \cite{BG2} depend 
on some geometrical property of the underlying Banach space, 
or on some ``regularity" property of the $\TT$-eigenvectors (see the remark just after Corollary \ref{eigvectfield}). 

\smallskip
In the present paper, our aim is to show that in fact, these assumptions can be completely removed. Thus, we intend to establish ``optimal" sufficient conditions for weak and strong mixing 
in terms of the $\TT$-eigenvectors which are valid on an arbitrary Fr\'echet space. These conditions turn out to be also necessary when the underlying space $X$ is a Banach space with \emph{cotype 2}, and hence we get complete characterizations 
of weak and strong mixing in this case. We shall in fact consider 
some more general notions of ``mixing", but our main concerns are really the weak and strong mixing properties. 

\smallskip
At this point, we should recall the definitions. A measure-preserving transformation $T:(X,\mathfrak B,\mu)\to(X,\mathfrak B,\mu)$ is \emph{weakly mixing} (with respect to $\mu$) if
$$\frac{1}{N}\sum_{n=0}^{N-1} \vert \mu(A\cap T^{-n}(B))-\mu (A)\mu(B)\vert\xrightarrow{N\to\infty} 0
$$
for any measurable sets $A,B\subset X$; and $T$ is \emph{strongly mixing} if
$$\mu(A\cap T^{-n}(B))\xrightarrow{n\to\infty} \mu(A)\mu(B)$$
for any $A,B\in\mathfrak B$. (Ergodicity can be defined exactly as weak mixing, but removing the absolute value in the Ces\`aro mean).

\smallskip
According to the ``spectral viewpoint" on ergodic theory, weakly mixing transformations are closely related to \emph{continuous} measures on the circle $\TT$, and strongly mixing transformations 
are related to \emph{Rajchman} measures, i.e. measures whose Fourier coefficients vanish at infinity. Without going into any detail at this point, we just recall that, by a classical result of Wiener 
(see \mbox{e.g.} \cite{Ktz}), continuous measures on $\TT$ are characterized by the behaviour of their Fourier coefficients: 
a measure $\sigma$ is continuous if and only if 
$$\frac{1}{N}\sum_{n=0}^{N-1} \vert \widehat\sigma (n)\vert\xrightarrow{N\to\infty} 0\, .$$
Wiener's lemma is usually stated with symmetric Ces\`aro means, but this turns out to be equivalent. Likewise, by the so-called \emph{Rajchman's lemma}, a measure $\sigma$ is Rajchman if and only if $\widehat\sigma (n)\to 0$ as $n\to +\infty$ (that is, a one-sided limit 
is enough). 

\smallskip
Especially important for us will be the corresponding families of ``small" sets of the circle; that is,
the sets which are annihilated by every positive measure in the family under consideration (continuous measures, or Rajchman measures). Obviously, a Borel set $D\subset\TT$ is small for continuous measures if and only if it is countable. The 
small sets for Rajchman measures are the so-called \emph{sets of extended uniqueness} or \emph{sets of uniqueness for Fourier-Stieltjes series}, which have been extensively studied since the beginning of the 20th century (see \cite{KL}). 
The family of all sets of extended uniqueness 
is usually denoted by $\mathcal U_0$.

\medskip
Our main results can now be summarized as follows. 

\begin{theorem}\label{WS} Let $X$ be a complex separable Fr\'echet space, and let $T\in\mathfrak L(X)$.
\begin{enumerate}
\item[\rm (1)] Assume that the $\TT$-eigenvectors are \emph{perfectly spanning}, in the following sense: for any countable set
$D\subset \TT$, the linear span of $\bigcup_{\lambda\in\TT\setminus D}\ker (T-\lambda)$ is dense in $X$. Then $T$ is weakly mixing in the Gaussian sense.
\item[\rm (2)] Assume that the $\TT$-eigenvectors are \emph{$\mathcal U_0$-perfectly spanning}, in the following sense:  for any Borel set of extended uniqueness 
$D\subset \TT$, the linear span of $\bigcup_{\lambda\in\TT\setminus D}\ker (T-\lambda)$ is dense in $X$. Then $T$ is strongly mixing in the Gaussian sense.
\item[\rm (3)] In {\rm (1)} and {\rm (2)}, the converse implications are true if $X$ is a Banach space with {cotype 2}.
\end{enumerate}
\end{theorem}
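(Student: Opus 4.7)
The guiding principle is the classical spectral correspondence: for a $T$-invariant Gaussian measure $\mu$ on $X$, each continuous linear functional $x^*\in X^*$ has a scalar spectral measure $\sigma_{x^*}$ on $\TT$ with Fourier coefficients $\widehat{\sigma_{x^*}}(n)=\int x^*(T^n x)\overline{x^*(x)}\,d\mu(x)$, and by Wiener's and Rajchman's lemmas, $T$ is weakly (resp.\ strongly) mixing with respect to $\mu$ iff every $\sigma_{x^*}$ is continuous (resp.\ Rajchman). Parts (1) and (2) therefore amount to building a Gaussian $\mu$ of full support whose spectral measures enjoy these properties; the two arguments run in parallel, with ``continuous'' replaced by ``Rajchman''.

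My plan for (1) and (2) is to build $\mu$ from measurable \emph{$\TT$-eigenvector fields}, \mbox{i.e.}\ measurable maps $E:\TT\to X$ with $TE(\lambda)=\lambda E(\lambda)$. Given such an $E$ and a Borel probability $\sigma$ on $\TT$, the formal stochastic integral $\xi_E=\int_\TT E(\lambda)\,dW_\sigma(\lambda)$ against a complex Gaussian white noise of intensity $\sigma$ should be an $X$-valued Gaussian vector whose distribution is $T$-invariant with spectral measures absolutely continuous with respect to $\sigma$. The main technical lemma to prove states: under perfect spanning (resp.\ $\U_0$-perfect spanning), there exist sequences $(E_k)_{k\in\NN}$ of eigenvector fields and $(\sigma_k)$ of continuous (resp.\ Rajchman) Borel probabilities on $\TT$ such that the integrals $\xi_{E_k}$ make sense in $X$ and the values $\{E_k(\lambda):k\in\NN,\,\lambda\in\supp\sigma_k\}$ span a dense subspace. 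Then $\mu$ is the distribution of $\sum_k 2^{-k}\xi_k$, with $\xi_k$ independent copies of $\xi_{E_k}$; full support comes from density of spans, and continuity (or the Rajchman property) is preserved under convex combinations of spectral measures.

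The main obstacle is giving rigorous meaning to $\xi_E$ on a general Fr\'echet space, where type 2 is unavailable and the previous works \cite{BG2,BG3} had to impose extra geometry. I would handle this by discretization: partition $\TT$ into small arcs $I_i$, select $\lambda_i\in I_i$, and approximate $\xi_E$ by the finite Gaussian sum $\sum_i\sqrt{\sigma(I_i)}\,g_iE(\lambda_i)$. Convergence as the partition refines must be arranged by careful \emph{choice} of the $E_k$ rather than by any ambient property of $X$, using the flexibility that perfect spanning provides: we can discard any countable (or $\U_0$) set of ``bad'' eigenvalues while keeping the remaining eigenvectors dense, which lets us select approximating families whose continuous seminorms are summable in $X$. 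Arranging these choices uniformly in $k$ and showing that the resulting $\mu$ has full support is where most of the work will lie.

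For part (3), cotype 2 is used to reverse the construction. Starting from a mixing Gaussian $\mu$ of full support, the spectral theorem applied to the unitary $U_T:f\mapsto f\circ T$ on the first Gaussian chaos $\mathcal H_1(\mu)\subset L^2(\mu)$ yields a direct integral decomposition over a continuous (resp.\ Rajchman) measure on $\TT$. The role of cotype 2 is that any convergent Gaussian series $\sum g_n x_n$ in $X$ satisfies $\sum\Vert x_n\Vert^2<\infty$ for every continuous seminorm $\Vert\,\cdot\,\Vert$; this characterization is exactly what allows the abstract $L^2$-spectral decomposition to be transported back to a dense family of actual $X$-valued $\TT$-eigenvectors whose eigenvalues avoid any prescribed countable set (resp.\ set of extended uniqueness), yielding the required perfect spanning condition.
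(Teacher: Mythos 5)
Your overall architecture matches the paper's: reduce mixing to continuity/Rajchman-ness of the spectral measures, build the Gaussian measure from $\TT$-eigenvector fields integrated against white noise (equivalently, from gamma-radonifying operators $K_E$ intertwining $T$ with a unitary multiplication operator), and use cotype~2 in part~(3) to write the covariance operator of a given mixing Gaussian measure as a $K_E$ and read off the spanning property. Part~(3) as you sketch it is essentially the paper's argument, and the reduction of mixing to the first chaos (your claim that it suffices to test $\sigma_{x^*}$ on linear functionals) is a known fact for Gaussian measures, though it does require the Wiener-chaos/Hermite computation that the paper carries out.

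The genuine gap is in the central construction for (1) and (2). You correctly identify the obstacle --- making $\xi_{E}=\int E\,dW_\sigma$ converge on a Fr\'echet space without type~2 --- but the proposed resolution (``discard bad eigenvalues \ldots select approximating families whose continuous seminorms are summable'') is not a mechanism; it is a restatement of what must be proved. Discarding a countable or $\mathcal U_0$ set of eigenvalues does nothing by itself to produce summability, and continuity of $E$ on a subset of $\TT$ does not make $K_E$ gamma-radonifying in general. The paper's solution is to abandon $\TT$ as the parameter space: it builds, by a Cantor-type induction, an eigenfield $(E,\phi)$ on a Cantor group (or a product space $\Omega(\mathbf q)$), arranging at each stage that the diameters of the $E$-images of the cells shrink \emph{faster} than the reciprocal of the number of cells (the ``super-Lipschitz'' condition, i.e.\ H\"older exponent $>1$ for the ultrametric), which forces $\sum_\gamma\Vert\widehat E(\gamma)\Vert<\infty$ and hence gamma-radonification --- something impossible for nonconstant maps on an arc, which is exactly why earlier works needed geometric hypotheses. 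Your discretized sums $\sum_i\sqrt{\sigma(I_i)}\,g_iE(\lambda_i)$ would need precisely this kind of quantitative coupling between the partition, the measure of the cells, and the oscillation of $E$, and your sketch supplies none. A second, independent gap concerns the strong mixing case: in any such inductive construction the spectral measure $\sigma$ is obtained as a weak-star limit of discrete approximations, and nothing in your outline guarantees the limit is Rajchman (a weak-star limit of Rajchman measures need not be Rajchman). Controlling this is the most delicate point of the paper (the bookkeeping with the $c_0$-like seminorms $\Phi_k$ in the proof of Lemma~\ref{superkey}), and it cannot be waved away by ``preservation under convex combinations,'' which only handles countable mixtures of already-constructed Rajchman measures, not the limit of the construction itself.
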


\smallskip
Some remarks are in order regarding the scope and the ``history" of these results.
\begin{remarks*}
\begin{enumerate}[\rm (i)]
\item When $X$ is a Hilbert space, (1) is stated in \cite{F} (with some additional assumptions 
on the operator $T$) and a detailed proof is given in \cite{BG3} (without these additional assumptions). The definition of ``perfectly spanning" used in \cite{BG3} is formally stronger than the above one, 
but the two notions are in fact equivalent (\cite{Sophie}).
\item It is shown in \cite{Sophie} that under the assumption of (1), the operator $T$ is frequently hypercyclic. The proof is rather complicated, and it is not clear that it could be modified to get weak mixing in the Gaussian sense. However, some of the ideas of \cite{Sophie} will be crucial for us. 
In particular, sub-section \ref{Sophiesection} owes a lot to \cite{Sophie}.
\item In the weak mixing case, (3) is proved in \cite[Theorem 4.1]{BG2}.
\item It seems unnecessary to recall here the definition of cotype (see any book on Banach space theory, \mbox{e.g.} \cite{AK}). 
Suffices it to say that this is a geometrical property of the space, and that Hilbert space has cotype $2$  
as well as $L^p$ spaces for $p\in [1,2]$ (but \emph{not} for $p>2$).
\item As observed in \cite[Example 4.2]{BG2}, (3) does not hold on an arbitrary Banach space $X$.  Indeed, let $X:=\mathcal C_0([0,2\pi])=\{ f\in\mathcal C([0,2\pi]);\; f(0)=0\}$ and let $V:L^2(0,2\pi)\to X$ be the Volterra operator, $Vf(t)=\int_0^t f(s)\, ds$. There is a unique operator $T:X\to X$ such that $TV=VM_\phi$, where $M_\phi:L^2(0,2\pi)\to L^2(0,2\pi)$ is the multiplication operator associated with the function $\phi (t)=e^{it}$. The operator $T$ is given by the formula 
\begin{equation}\label{Kal}
Tf(t)=\phi(t)f(t)-\int_0^t \phi'(s)f(s)\, ds\, .
\end{equation}
It is easy to check that $T$ has no eigenvalues. On the other hand, $T$ is strongly mixing with respect to the Wiener measure on $\mathcal C_0([0,2\pi])$.
\end{enumerate}
\end{remarks*}

\smallskip
As it turns out, ergodicity and weak mixing in the Gaussian sense are in fact equivalent (see e.g. \cite{G}, or \cite[Theorem 4.1]{BG2}). Hence, from Theorem \ref{WS} we immediately get the following result. 
(A Gaussian measure $\mu$ is \emph{nontrivial} if $\mu\neq\delta_0$).

\begin{corollary}\label{characexistergod} For a  linear operator $T$ acting on a Banach space $X$ with cotype 2, the following are equivalent:
\begin{itemize}
\item[\rm (a)] $T$ admits a nontrivial ergodic Gaussian measure;
\item[\rm (b)] there exists a closed, $T$-invariant subspace $Z\neq \{ 0\}$ such that $$\overline{\rm span}\, \bigcup_{\lambda\in\TT\setminus D}\ker (T_{\vert Z}-\lambda)=Z$$ for every countable set 
$D\subset\TT$.
\end{itemize}
In this case, $T$ admits an ergodic Gaussian measure with support $Z$, for any such subspace $Z$.
\end{corollary}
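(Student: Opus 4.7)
The plan is to derive Corollary \ref{characexistergod} directly from Theorem \ref{WS}, using the stated equivalence between ergodicity and weak mixing for Gaussian measures.

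For (b) $\Rightarrow$ (a), suppose that $Z$ is as in (b). Being a closed subspace of the cotype~$2$ space $X$, $Z$ itself has cotype~$2$. Theorem \ref{WS}\,(1) applied to $T_{|Z}\in\mathfrak L(Z)$ produces a Gaussian measure $\mu$ on $Z$, with full support in $Z$, with respect to which $T_{|Z}$ is weakly mixing, and therefore ergodic. Pushing $\mu$ forward along the inclusion $Z\hookrightarrow X$ yields a nontrivial ergodic Gaussian measure for $T$ on $X$ whose support is exactly $Z$; this also establishes the final sentence of the corollary.

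For (a) $\Rightarrow$ (b), let $\mu$ be a nontrivial ergodic Gaussian measure for $T$ and set $Z:=\supp(\mu)$. Writing $\mu$ as the distribution of an almost surely convergent series $\sum_n g_n x_n$, one sees that $Z$ coincides with $\overline{\mathrm{span}}\,\{x_n:n\in\NN\}$, and hence is a nonzero closed linear subspace of $X$. Continuity of $T$ together with $T$-invariance of $\mu$ gives $\overline{T(Z)}=\supp(\mu\circ T^{-1})=\supp(\mu)=Z$, so $Z$ is $T$-invariant. Viewing $\mu$ as a Gaussian measure on $Z$ with full support, the operator $T_{|Z}$ is ergodic, hence weakly mixing, with respect to $\mu$. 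Since $Z$ inherits cotype~$2$, Theorem \ref{WS}\,(3) applied in the weak-mixing direction to $T_{|Z}$ yields that the $\TT$-eigenvectors of $T_{|Z}$ are perfectly spanning in $Z$.

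No real obstacle is to be expected: the argument is essentially an assembly of Theorem \ref{WS} with the ergodicity–weak-mixing equivalence. The only items requiring a brief comment are the standard Banach-space facts that the support of a centered Gaussian measure on a Fr\'echet space is a closed linear subspace, and that cotype~$2$ is inherited by closed subspaces.
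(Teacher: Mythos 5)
Your argument is correct and is essentially the paper's own proof: take $Z=\supp(\mu)$ and apply Theorem \ref{WS}\,(3) for (a)$\Rightarrow$(b), and apply Theorem \ref{WS}\,(1) to $T_{|Z}$ together with the ergodicity/weak-mixing equivalence for (b)$\Rightarrow$(a). The extra details you supply (cotype 2 passing to closed subspaces, the support of a Gaussian measure being a closed linear subspace, $T$-invariance of the support) are exactly the standard facts the paper leaves implicit.
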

\begin{proof} If $T$ admits an ergodic Gaussian measure $\mu\neq\delta_0$, then $Z:={\rm supp}(\mu)$ is a non-zero $T$-invariant subspace, and $Z$ satisfies (b) by Theorem \ref{WS} (3). The converse follows from Theorem 
\ref{WS} (1).
\end{proof}

\smallskip
For concrete applications, it is useful to formulate Theorem \ref{WS} in terms of  \emph{$\TT$-eigenvector fields} for the operator $T$. A $\TT$-eigenvector field for $T$ is a map $E:\Lambda\to X$ defined on some set $\Lambda\subset\TT$, such that $$TE(\lambda)=\lambda E(\lambda)$$ for every $\lambda\in\Lambda$. (The terminology is not perfectly accurate: strictly speaking, 
$E(\lambda)$ is perhaps not a $\TT$-eigenvector because it is allowed to be $0$). Recall also that a closed set $\Lambda\subset\TT$ is \emph{perfect} if it has no isolated points or, equivalently, if $V\cap\Lambda$ is uncountable 
for any open set $V\subset\TT$ such that $V\cap\Lambda\neq\emptyset$.
 Analogously, a closed set $\Lambda\subset \TT$ is said to be \emph{$\mathcal U_0$-perfect} if $V\cap \Lambda$ is not a set of extended uniqueness for any open set $V$ such that 
$V\cap\Lambda\neq\emptyset$. 
 (For example, any nontrivial closed arc is $\mathcal U_0$-perfect). 
 
\begin{corollary}\label{eigvectfield} Let $X$ be a separable complex Fr\'echet space, and let $T\in\mathfrak L(X)$. Assume that one has at hand a family of \emph{continuous} $\TT$-eigenvector fields $(E_i)_{i\in I}$ for $T$, where $E_i:\Lambda_i\to X$ is defined 
on some closed set $\Lambda_i\subset\TT$, such that 
${\rm span}\left(\bigcup_{i\in I} E_i(\Lambda_i)\right)$ is dense in $X$. 

\smallskip
\begin{enumerate}
\item[\rm (i)] If each $\Lambda_i$ is a perfect set, then $T$ is weakly mixing in the Gaussian sense.
\item[\rm (ii)] If each $\Lambda_i$ is $\mathcal U_0$-perfect, then $T$ is strongly mixing in the Gaussian sense.
\end{enumerate}
\end{corollary}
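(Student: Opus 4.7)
The plan is to deduce both parts directly from Theorem~\ref{WS} by verifying the perfectly spanning (resp.\ $\mathcal U_0$-perfectly spanning) hypothesis from the data given by the continuous eigenvector fields. In both cases the underlying mechanism is the same: approximate eigenvectors whose eigenvalues fall in the ``bad'' set $D$ by eigenvectors with eigenvalues in $\TT\setminus D$, using continuity of $E_i$.

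For (i), I would fix an arbitrary countable set $D\subset\TT$ and check that $\overline{\mathrm{span}}\bigcup_{\lambda\in\TT\setminus D}\ker(T-\lambda)=X$. Since the vectors $E_i(\lambda)$, $\lambda\in\Lambda_i$, $i\in I$, already have dense linear span by hypothesis, it suffices to show each $E_i(\lambda_0)$ lies in this closure. If $\lambda_0\in\Lambda_i\setminus D$ there is nothing to do; if $\lambda_0\in \Lambda_i\cap D$, the key point is that perfectness of $\Lambda_i$ makes every non-empty relatively open piece $V\cap\Lambda_i$ uncountable and hence not contained in the countable set $D$, so $\Lambda_i\setminus D$ is dense in $\Lambda_i$. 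Then one picks $\lambda_n\in\Lambda_i\setminus D$ with $\lambda_n\to\lambda_0$, uses continuity of $E_i$ to get $E_i(\lambda_n)\to E_i(\lambda_0)$, and observes that each $E_i(\lambda_n)\in\ker(T-\lambda_n)$ with $\lambda_n\notin D$. Theorem~\ref{WS}(1) then delivers weak mixing in the Gaussian sense.

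Part (ii) follows the identical template, with ``countable'' systematically replaced by ``Borel set of extended uniqueness''. The only step that deserves a separate comment is the density of $\Lambda_i\setminus D$ in $\Lambda_i$: if $V\subset\TT$ is open with $V\cap\Lambda_i\neq\emptyset$, then $\mathcal U_0$-perfectness of $\Lambda_i$ says that the Borel set $V\cap\Lambda_i$ does not belong to $\mathcal U_0$, whereas $V\cap\Lambda_i\cap D$ does, because $\mathcal U_0$ is hereditary on Borel sets. Hence $V\cap\Lambda_i\not\subset D$, giving the required density, and the rest of the argument transfers verbatim to apply Theorem~\ref{WS}(2).

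There is no substantial obstacle here; the corollary is essentially a packaging of Theorem~\ref{WS} in terms of continuous eigenvector fields. The only non-trivial ingredient invoked externally is the fact that the class of Borel sets of extended uniqueness is a $\sigma$-ideal (in particular closed under Borel subsets), which is classical; see \cite{KL}.
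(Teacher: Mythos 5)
Your proposal is correct and follows essentially the same route as the paper: the paper's proof likewise reduces to observing that perfectness (resp.\ $\mathcal U_0$-perfectness) of $\Lambda_i$ forces $\Lambda_i\setminus D$ to be dense in $\Lambda_i$, and then uses continuity of $E_i$ to conclude that the $\TT$-eigenvectors are perfectly (resp.\ $\mathcal U_0$-perfectly) spanning before invoking Theorem~\ref{WS}. Your write-up merely spells out the approximation step in more detail.
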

\begin{proof} This follows immediately from Theorem \ref{WS}. Indeed, if $\Lambda\subset\TT$ is a perfect set then $\Lambda\setminus D$ is dense in $\Lambda$ for any countable set $D$, whereas if $\Lambda$ is $\mathcal U_0$-perfect then 
$\Lambda\setminus D$ is dense in $\Lambda$ for any $\mathcal U_0$-set $D$. Since the $\TT$-eigenvector fields $E_i$ are assumed to be continuous, it follows that the $\TT$-eigenvectors of $T$ are perfectly spanning in case (i), and $\mathcal U_0$-perfectly spanning in case (ii).
\end{proof}

\begin{remark*} Several results of this kind are proved in \cite{BG2} and in \cite[Chapter 5]{BM}, all of them being based on an interplay between the geometry of the (Banach) space $X$ and the regularity of the $\TT$-eigenvector fields $E_i$. For example, it is shown that if $X$ has \emph{type 2}, then 
continuity of the $E_i$ is enough, whereas if the $E_i$ are Lipschitz and defined on (nontrivial) closed arcs then no assumption on $X$ is needed. ``Intermediate" cases involve the {type} of the Banach space $X$ and H\" older conditions on the $E_i$. What Corollary \ref{eigvectfield} says is that continuity of the $E_i$ is \emph{always} enough, regardless of the underlying space $X$.

We also point out that the assumption in (i), i.e. the existence of $\TT$-eigenvector fields with the required spanning property defined on perfect sets, is in fact equivalent to the perfect spanning property (\cite{Sophie}). Likewise, the assumption in (ii) is equivalent to the $\mathcal U_0$-perfect spanning property (see Proposition \ref{perfect}). 
\end{remark*}

\smallskip
In order to illustrate our results, two examples are worth presenting immediately. Other examples will be reviewed in section \ref{final}.

\begin{ex1} Let $\mathbf w=(w_n)_{n\geq 1}$ be a bounded sequence of nonzero complex numbers, and let $B_{\bf w}$ be the associated \emph{weighted backward shift} acting on $X_p=\ell^p(\NN)$, $1\leq p<\infty$ 
or $X_\infty=c_0(\NN)$; that is, 
$B_{\bf w}(x_0,x_1,x_2,\dots )=(w_1x_1,w_2x_2,\dots ).$ Solving the equation $B_{\mathbf w}(x)=\lambda x$, it is easy to check that $B_{\mathbf w}$ has eigenvalues of modulus 1 if and only if 
\begin{equation}\label{shift} 
\hbox{the sequence $\displaystyle\left(\frac{1}{w_0\cdots w_n}\right)_{n\geq 0}$ is in $X_p$}
\end{equation}
 (we have put $w_0:=1$). In this case the formula 
$$E(\lambda):=\sum_{n=0}^\infty \frac{\lambda^n}{w_0\cdots w_n}\, e_n$$
defines a continuous $\TT$-eigenvector field $E:\TT\to X_p$ such that $\overline{\rm span}\, E(\TT)=X_p$. Hence $B_{\mathbf w}$ is strongly mixing in the Gaussian sense. This is known since \cite{BG2} if $p<\infty$, but it appears to be new for weighted shifts on $c_0(\NN)$.

The converse is true if $p\leq 2$ (\mbox{i.e.} (\ref{shift}) is satisfied if 
$B_{\mathbf w}$ is strongly mixing in the Gaussian sense) since in this case $X_p$ has cotype 2, but the case $p>2$ is not covered by Theorem \ref{WS}. However, it turns out that the converse does hold true for any $p<\infty$. In fact, (\ref{shift}) 
is satisfied as soon as the weighted shift 
$B_{\mathbf w}$ is frequently hypercyclic (\cite{BR}). As shown in \cite{BG2}, this breaks down completely when $p=\infty$: there is a frequently hypercyclic weighted shift $B_{\mathbf w}$ on $c_0(\NN)$ whose weight sequence satisfies $w_1\cdots w_n=1$ 
for infinitely many $n$. Such a weighted shift does not admit any (nontrivial) invariant Gaussian measure.

Let us also recall that, in contrast with the ergodic properties, the hypercyclicity of $B_{\bf w}$ does not depend on $p$: by a well known result of H. Salas (\cite{Sal}), $B_{\mathbf w}$ is hypercyclic on $X_p$ for any $p$  
if and only if $\sup_{n\geq 1} \vert w_1\cdots w_n\vert=\infty$. Likewise, $B_{\mathbf w}$ is strongly mixing in the topological sense (on any $X_p$) iff $\vert w_1\cdots w_n\vert\to\infty$. Hence, we see that strong mixing in the topological sense turns out to be equivalent to strong mixing 
in the Gaussian sense for weighted shifts on $c_0(\NN)$.
\end{ex1}

\smallskip
\begin{ex2} Let $T$ be the operator defined by formula (\ref{Kal}), but acting on $L^2(0,2\pi)$. It is straightforward to check that for any $t\in (0,2\pi)$, the function $f_t=\mathbf 1_{(0,t)}$ is an eigenvector for $T$ with associated eigenvalue 
$\lambda =e^{it}$. Moreover the map $E:\TT\setminus\{ \mathbf 1\}\to L^2(0,2\pi)$ defined by $E(e^{it})=f_t$ is clearly continuous. Now, let $\Lambda$ be an arbitrary compact subset of $\TT\setminus\{ \mathbf  1\}$. Let us denote by $H_\Lambda$ the closed 
linear span of $E(\Lambda)$, and let $T_\Lambda$ be the restriction of $T$ to $H_\Lambda$. By a result 
of G. Kalisch (\cite{Kal}, see also \cite[Lemma 2.12]{BG1}), the point spectrum of $T_\Lambda$ is exactly equal to $\Lambda$.  By Corollary \ref{eigvectfield} and Theorem \ref{WS} (3), it follows that the operator $T_\Lambda$ is weakly mixing in the Gaussian sense 
if and only if $\Lambda$ is a perfect set, and strongly mixing iff $\Lambda$ is $\mathcal U_0$-perfect. Hence, any perfect $\mathcal U_0$-set $\Lambda$ gives rise to a very simple example of a weakly mixing transformation which is not strongly mixing. This could be of some interest since the classical concrete examples are arguably more complicated (see \mbox{e.g.} the one given in \cite[section 4.5]{P}).  

Regarding the difference between weak and strong mixing, it is also worth pointing out that there exist Hilbert space operators which are weakly mixing in the Gaussian sense but not even strongly mixing in the {topological} sense. Indeed, in the beautiful paper 
\cite{BadG}, C. Badea and S. Grivaux are able to construct a weakly mixing operator (in the Gaussian sense) which is \emph{partially power-bounded}, \mbox{i.e.} $\sup_{n\in I} \Vert T^n\Vert<\infty$ for some infinite set $I\subset\NN$.  
This line of investigations was pursued even much further in \cite{BadG2} and \cite{EG}.
\end{ex2}

\smallskip
We have deliberately stressed the formal analogy between weak and strong mixing in the statement of Theorem \ref{WS}. In view of this analogy, it should not come as a surprise that Theorem \ref{WS} can be deduced from some more general results dealing with abstract notions of ``mixing". (In order not to make this introduction exceedingly long, these results will be described in the next section). In particular, (1) and (2) are formal consequences of Theorem \ref{abstract} below. However, even though the proof of Theorem \ref{abstract} is ``conceptually" simple, the technical details make it rather long. This would be exactly the same for the strong mixing case (i.e. part (2) of Theorem \ref{WS}), but in the weak mixing case it is possible to give a technically much simpler and hence much shorter proof. For the sake of readability, it seems desirable to present 
this proof separately. But since there is no point in repeating identical arguments, we shall follow the abstract approach as long as this does not appear to be artificial.

\medskip
The paper is organized as follows. In section 2, we present our abstract results. In section 3, we review some basic facts concerning Gaussian measures and we outline the strategy for proving the abstract results and hence Theorem \ref{WS}. 
Apart from some details in the presentation and the level of generality, 
this follows the scheme described in \cite{BG3}, \cite{BG2} and \cite{BM}. In section 4, we prove part (1) of Theorem \ref{WS} (the sufficient condition for weak mixing). The abstract results are proved in sections \ref{proofabstract1} and \ref{proofabstract2}. 
Section \ref{final} contains some additional examples and miscellaneous remarks. In particular, we briefly discuss the ``continous" analogues of our results (i.e. the case of $1$-parameter semigroups), and we show that for a large class of strongly mixing weighted shifts, the set of hypercyclic vectors turns out to be rather small, namely Haar-null in the sense of Christensen.  We conclude the paper with some possibly interesting questions.

\medskip\noindent
{\bf Notation and conventions.} The set of natural numbers is denoted either by $\NN$ or by $\ZZ_+$. We denote by $\mathcal M(\TT)$ the space of all complex measures on $\TT$, endowed with total variation norm. The Fourier transform of a measure 
$\sigma\in\mathcal M(\TT)$ is denoted either by $\widehat\sigma$ or by $\mathcal F(\sigma)$. As a rule, all measurable spaces $(\Omega,\mathfrak A)$ are standard Borel, and all measure spaces $(\Omega,\mathfrak A, m)$ are sigma-finite. All Hilbert spaces $\mathcal H$ are (complex) separable and infinite-dimensional. The scalar product $\langle u,v\rangle_{\mathcal H}$ is linear with respect to $u$ and conjugate-linear with respect to $v$. 

\section{Abstract results}  
\subsection{$\mathbf S$-mixing}

It is well known (and easy to check) that the definitions of ergodicity, weak and strong mixing can be reformulated as follows. Let $(X,\mathfrak B,\mu)$ be a probability space, and set
$$L^2_0(\mu):=\left\{ f\in L^2(\mu);\; \int_X f\, d\mu=0\right\} .$$
Then, a measure-preserving transformation $T:(X,\mathfrak B,\mu)\to(X,\mathfrak B,\mu)$ is ergodic with respect to $\mu$ if and only if
$$\frac{1}{N}\sum_{n=0}^{N-1} \langle f\circ T^n,g\rangle_{L^2(\mu)}\xrightarrow{N\to\infty} 0$$
for any $f,g\in L^2_0(\mu)$. The transformation $T$ is weakly mixing iff
$$\frac{1}{N}\sum_{n=0}^{N-1} \left\vert \langle f\circ T^n,g\rangle_{L^2(\mu)}\right\vert\xrightarrow{N\to\infty} 0$$
for any $f,g\in L^2_0(\mu)$, and $T$ is strongly mixing iff
$$\langle f\circ T^n,g\rangle_{L^2(\mu)}\xrightarrow{n\to\infty} 0\, .$$

\medskip
Now, let us denote by $V_T:L^2(\mu)\to L^2(\mu)$ the Koopman operator associated with a measure-preserving transformation $T:(X,\mathfrak B,\mu)\to(X,\mathfrak B,\mu)$, \mbox{i.e.} the isometry defined by 
$$V_Tf=f\circ T\, .$$

For any $f,g\in L^2(\mu)$, there is a uniquely defined complex measure $\sigma_{f,g}=\sigma_{f,g}^T$ on $\TT$ such that 
$$\widehat\sigma_{f,g} (n)=\left\{
\begin{matrix} \langle V_T^n f,g\rangle_{L^2(\mu)}&{\rm if}&n\geq 0\\
\langle V_T^{*\vert n\vert} f,g\rangle_{L^2(\mu)}&{\rm if}&n< 0

\end{matrix}
\right.
$$
(When $f=g$, this follows from Bochner's theorem because in this case the right-hand side defines a positive-definite function on $\ZZ$; and then one can use a ``polarization" argument).
We denote by $\Sigma(T,\mu)$ the collection of all measures $\sigma_{f,g}$, $f,g\in L^2_0(\mu)$, and forgetting the measure $\mu$ we refer to $\Sigma(T,\mu)$ as ``the spectral measure of $T$".

\smallskip
With these notations, we see that $T$ is weakly mixing with respect to $\mu$ iff all measures $\sigma\in \Sigma(T,\mu)$ are continuous (by Wiener's lemma), and that $T$ is strongly mixing iff 
all measures $\sigma\in \Sigma(T,\mu)$ are Rajchman (by Rajchman's lemma). Likewise, $T$ is ergodic iff $\sigma (\{ \mathbf 1\})=0$ for every $\sigma\in \Sigma(T,\mu)$.

\medskip
More generally, given any family of measures $\mathcal B\subset\mathcal M(\TT)$, one may say that $T$ is \emph{$\mathcal B$-mixing} with respect to $\mu$ if the spectral measure of $T$ lies in $\mathcal B$, \mbox{i.e.}
all measures $\sigma\in\Sigma(T,\mu)$ are in $\mathcal B$. We shall in fact consider a more specific case which seems to be the most natural one for our concerns. Let us denote by $\mathcal F_+:\mathcal M(\TT)\to \ell^\infty(\ZZ_+)$ the positive part of the 
Fourier transformation, i.e. 
$\mathcal F_+(\sigma)= \widehat\sigma_{\vert \ZZ_+}$. 

\begin{definition} Given any family $\mathbf S\subset\ell^\infty(\ZZ_+)$, we say that a measure $\sigma\in\mathcal M(\TT)$ is \emph{$\mathbf S$-continuous} if $\mathcal F_+(\sigma)\in\mathbf S$. A measure-preserving transformation 
$T:(X,\mu)\to (X,\mu)$ is \emph{$\mathbf S$-mixing} with respect to $\mu$ if every measure $\sigma \in\Sigma (T,\mu)$ is $\mathbf S$-continuous. 
\end{definition}

\medskip
Thus, strong mixing is just $\mathbf S$-mixing for the family $\mathbf S=c_0(\ZZ_+)$, weak mixing is $\mathbf S$-mixing for the the family $\mathbf S$ of 
all sequences $(a_n)\in\ell^\infty(\ZZ_+)$ such that $\vert a_n\vert\to 0$ in the Ces\`aro sense, and ergodicity corresponds to the family $\mathbf S$ of all $a\in\ell^\infty (\ZZ_+)$ tending to 
$0$ in the Ces\`aro sense. In what follows, these families will be denoted by $\mathbf S_{\rm strong}$, $\mathbf S_{\rm weak}$ and $\mathbf S_{\rm erg}$, respectively.

\subsection{Small subsets of the circle} Given a family of measures $\mathcal B\subset\mathcal M(\TT)$, it is quite natural in harmonic analysis to try to say something about the \emph{$\mathcal B$-small} subsets of $\TT$, i.e. the sets 
$D\subset\TT$ that are annihilated by all positive measures $\sigma\in\mathcal B$. By this we mean that for any such measure $\sigma$, one can find a Borel set $\widetilde D$ (possibly depending on $\sigma$) such that $D\subset\widetilde D$ and $\sigma (\widetilde D)=0$. When the family 
$\mathcal B$ has the form $\mathcal B=\mathcal F_+^{-1}(\mathbf S)$ for some $\mathbf S\subset\ell^\infty (\ZZ_+)$, we call these sets \emph{$\mathbf S$-small}. 

\smallskip
To avoid trivialities concerning $\mathcal B$-small sets, the family $\mathcal B$ under consideration should contain nonzero {positive} measures, and in fact it is desirable that it should be \emph{hereditary} with respect to absolute continuity; that is, 
any measure absolutely continuous with respect to some $\sigma\in\mathcal B$ is again in $\mathcal B$. The following simple lemma shows how to achieve this for families of the form $\mathcal F_{+}^{-1}(\mathbf S)$. Let us say that a family 
$\mathbf S\subset\ell^\infty(\ZZ_+)$ is \emph{translation-invariant} if it is invariant under both the forward and the backward shift on $\ell^\infty(\ZZ_+)$.

\begin{lemma}\label{hereditary} If $\mathbf S$ is a translation-invariant linear subspace of $\ell^\infty(\ZZ_+)$ such that $\mathcal F_{+}^{-1}(\mathbf S)$ is norm-closed in $\mathcal M(\TT)$, then $\mathcal F_{+}^{-1}(\mathbf S)$ is hereditary with respect to 
absolute continuity.
\end{lemma}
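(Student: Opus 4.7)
The plan is to deduce the hereditary property by a density argument: I would write any $\nu \ll \sigma$ as a norm limit of measures of the form $p\sigma$ with $p$ a trigonometric polynomial, and then use norm-closedness of $\mathcal B := \mathcal F_{+}^{-1}(\mathbf S)$ to reduce the problem to showing $z^k \sigma \in \mathcal B$ for each $k \in \ZZ$.

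More precisely, given $\sigma \in \mathcal B$ and $\nu = f\sigma$ with $f \in L^1(|\sigma|)$, I would approximate $f$ in $L^1(|\sigma|)$ by trigonometric polynomials $(p_m)$, using Fej\'er's theorem applied to the $L^1(|\sigma|)$-dense subspace $C(\TT)$. Since $\|p_m\sigma - f\sigma\| = \int |p_m - f|\, d|\sigma|$, this gives $p_m \sigma \to \nu$ in total variation norm. The norm-closedness hypothesis, together with the linearity of $\mathbf S$, then reduces everything to verifying that $z^k \sigma \in \mathcal B$ for every $k \in \ZZ$.

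A direct computation gives $\widehat{z^k\sigma}(n) = \widehat\sigma(n-k)$. For $k \leq 0$, restricting to $\ZZ_+$ yields $B^{-k}(\widehat\sigma|_{\ZZ_+})$, where $B$ denotes the backward shift; this lies in $\mathbf S$ at once by backward-shift invariance. For $k \geq 1$ the restriction to $\ZZ_+$ instead equals
\[
F^k\bigl(\widehat\sigma|_{\ZZ_+}\bigr) + \sum_{j=0}^{k-1}\widehat\sigma(j-k)\, e_j,
\]
where $F$ is the forward shift on $\ell^\infty(\ZZ_+)$ and $e_j$ is the $j$-th standard unit vector. The first summand lies in $\mathbf S$ by forward-shift invariance, but the finitely supported correction term needs separate attention.

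This correction term is the main obstacle, and my approach is to extract $c_{00} \subset \mathbf S$ directly from the hypotheses via the identity $b - FBb = b_0\, e_0$: assuming $\mathbf S \neq \{0\}$, pick $a \in \mathbf S$ with $a_{n_0} \neq 0$, and set $b := B^{n_0} a \in \mathbf S$, so $b_0 \neq 0$; then $b - FBb \in \mathbf S$ is a nonzero multiple of $e_0$, forcing $e_0 \in \mathbf S$, and applying powers of $F$ then gives $e_j \in \mathbf S$ for every $j \geq 0$. (The degenerate case $\mathbf S = \{0\}$ should be mentioned and handled separately, since the conclusion may fail there in general.) Assembling these observations finishes the proof.
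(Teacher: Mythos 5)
Your proof is correct and follows the same route as the paper's (one-line) argument: show $P\sigma\in\mathcal F_+^{-1}(\mathbf S)$ for trigonometric polynomials $P$ and then approximate in total variation, using norm-closedness. What you add is a genuinely careful treatment of the one point the paper elides. For $k\geq 1$ the sequence $\mathcal F_+(z^k\sigma)$ is \emph{not} simply a translate of $\mathcal F_+(\sigma)$: it is $F^k(\mathcal F_+(\sigma))$ plus a finitely supported correction built from the \emph{negative} Fourier coefficients $\widehat\sigma(-1),\dots,\widehat\sigma(-k)$, which $\mathcal F_+$ does not see. Your derivation of $c_{00}\subset\mathbf S$ from the identity $b-FBb=b_0e_0$ (valid whenever $\mathbf S\neq\{0\}$) is exactly what is needed to absorb that correction, and it uses only the stated hypotheses. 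Your caveat about $\mathbf S=\{0\}$ is also well taken: $\mathcal F_+^{-1}(\{0\})$ is a norm-closed, translation-invariant subspace for which the conclusion genuinely fails (e.g.\ $\bar z\,dm$ lies in it while $dm$ does not), so the lemma implicitly assumes nontriviality — harmless in the paper, where $\mathbf S$ always contains nonzero positive measures, but worth recording. In short: same approach, with a gap in the published proof correctly identified and repaired.
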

\begin{proof} If $\sigma\in\mathcal F_+^{-1}(\mathbf S)$ then $P\sigma$ is in $\mathcal F_+^{-1}(\mathbf S)$ for any trigonometric polynomial $P$, by translation-invariance. So the result follows by approximation.
\end{proof}

\smallskip
We shall also make use of the following well known result concerning \emph{$\mathcal B$-perfect} sets. By definition, a set $\Lambda\subset \TT$ is $\mathcal B$-perfect if $V\cap\Lambda$ is not $\mathcal B$-small for any open set $V\subset\TT$ 
such that $V\cap\Lambda\neq\emptyset$. 

\begin{lemma}\label{Bperfect} Let $\mathcal B$ be a norm-closed linear subspace of $\mathcal M(\TT)$, and assume that $\mathcal B$ is hereditary with respect to absolute continuity. For a closed set $\Lambda\subset\TT$, the following are equivalent:
\begin{itemize}
\item[\rm (a)] $\Lambda$ is $\mathcal B$-perfect;
\item[\rm (b)] $\Lambda$ is the support of some probability measure $\sigma\in\mathcal B$.
\end{itemize}
\end{lemma}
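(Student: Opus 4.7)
The plan is to prove the two implications separately, with (a)$\Rightarrow$(b) being the substantive one.

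For the easy direction (b)$\Rightarrow$(a), suppose $\sigma\in\mathcal B$ is a probability measure with $\mathrm{supp}(\sigma)=\Lambda$, and let $V\subset\TT$ be open with $V\cap\Lambda\neq\emptyset$. Then $V\cap\Lambda$ is a Borel set, and $\sigma(V\cap\Lambda)=\sigma(V)>0$ by the definition of support. Hence no Borel superset of $V\cap\Lambda$ can have $\sigma$-measure zero, so $V\cap\Lambda$ is not $\mathcal B$-small.

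For the main direction (a)$\Rightarrow$(b), the idea is to manufacture the measure $\sigma$ witnessing (b) as an absolutely convergent series of positive measures in $\mathcal B$, each of which is supported on a small piece of $\Lambda$, in such a way that every point of $\Lambda$ is charged on arbitrarily small scales. First I would fix a countable basis $(V_n)_{n\ge 1}$ of open sets in $\TT$ and let $I=\{n\ge 1 : V_n\cap\Lambda\neq\emptyset\}$. For each $n\in I$, assumption (a) provides a positive measure $\mu_n\in\mathcal B$ which does not annihilate the Borel set $V_n\cap\Lambda$; since $V_n\cap\Lambda$ is itself Borel, this means $\mu_n(V_n\cap\Lambda)>0$. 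Replacing $\mu_n$ by the restriction $\nu_n:=\mathbf 1_{V_n\cap\Lambda}\,\mu_n$, which belongs to $\mathcal B$ by the hereditariness hypothesis, I obtain a nonzero positive measure $\nu_n\in\mathcal B$ with $\mathrm{supp}(\nu_n)\subset V_n\cap\Lambda\subset\Lambda$.

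Now I would set
\[
\sigma:=\sum_{n\in I}\frac{\nu_n}{2^n\|\nu_n\|}.
\]
The series is normally convergent in $\mathcal M(\TT)$ (with total mass bounded by $1$), so by the norm-closedness of $\mathcal B$ the sum $\sigma$ lies in $\mathcal B$, and after renormalization we may assume $\sigma$ is a probability measure. Since each $\nu_n$ is concentrated on the closed set $\Lambda$, so is $\sigma$; hence $\mathrm{supp}(\sigma)\subset\Lambda$. Conversely, for any $\lambda\in\Lambda$ and any open neighborhood $V$ of $\lambda$, there is some $V_n\subset V$ containing $\lambda$ (so $n\in I$), and then $\sigma(V)\ge\sigma(V_n)\ge 2^{-n}\nu_n(V_n)/\|\nu_n\|=2^{-n}>0$, which gives $\lambda\in\mathrm{supp}(\sigma)$. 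Thus $\mathrm{supp}(\sigma)=\Lambda$, proving (b).

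The only place where one needs to be careful is to remember that the definition of $\mathcal B$-small uses Borel supersets; this is why it was convenient to work with open basic sets $V_n$, so that $V_n\cap\Lambda$ is automatically Borel and the condition "$V_n\cap\Lambda$ is not $\mathcal B$-small" directly produces a positive measure in $\mathcal B$ giving it strictly positive mass. The hereditariness hypothesis is what allows the restriction step, and norm-closedness of $\mathcal B$ is what allows the countable summation; both assumptions are therefore used in an essential way.
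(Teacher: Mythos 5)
Your proof is correct and follows essentially the same route as the paper's: in both cases one uses a countable basis of open sets meeting $\Lambda$, extracts from the $\mathcal B$-perfectness a positive measure in $\mathcal B$ charging each basic piece, restricts it to $\Lambda$ using hereditariness, and sums with weights $2^{-n}$, invoking norm-closedness to keep the sum in $\mathcal B$. The only (harmless) cosmetic differences are that you normalize at the end rather than term by term, and you work with a basis of $\TT$ intersected with $\Lambda$ instead of a relative basis of $\Lambda$.
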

\begin{proof} That (b) implies (a) is clear (without any assumption on $\mathcal B$). Conversely, assume that $\Lambda$ is $\mathcal B$-perfect. Let $(W_n)_{n\geq 1}$ be a countable basis of open sets for $\Lambda$, with $W_n\neq \emptyset$. 
Since $\mathcal B$ is hereditary, one can find for each $n$ a probability measure $\sigma_n\in\mathcal B$ such that ${\rm supp}(\sigma_n)\subset\Lambda$ and $\sigma_n (W_n)>0$. Then the probability measure $\sigma=\sum_1^\infty 2^{-n}\sigma_n$ is in $\mathcal B$ and 
${\rm supp}(\sigma)=\Lambda$.
\end{proof}

\subsection{The results} To state our results we need two more definitions. 
\begin{definition}\label{defBperfspan} Let $T$ be an operator acting on a complex separable Fr\'echet space $X$, and let $\mathcal B\subset\mathcal M(\TT)$. We say that the $\TT$-eigenvectors of $T$ are \emph{$\mathcal B$-perfectly spanning} if, for any Borel 
$\mathcal B$-small set $D\subset \TT$, the linear span of $\bigcup_{\lambda\in\TT\setminus D}\ker (T-\lambda)$ is dense in $X$. When $\mathcal B$ has the form $\mathcal F_+^{-1}(\mathbf S)$, the terminology \emph{$\mathbf S$-perfectly spanning} is used.
\end{definition}

Thus, perfect spanning is $\mathcal B$-perfect spanning for the family of continuous measures, and $\mathcal U_0$-perfect spanning is $\mathcal B$-perfect spanning for the family of Rajchman measures.

\begin{remark*} At some places, we will encounter sets which are \emph{analytic} but perhaps non Borel. Recall that a set $D$ in some Polish space $Z$ is {analytic} if one can find a Borel relation $B\subset Z\times E$ (where $E$ is Polish) such that 
$z\in D\Leftrightarrow\exists u\;:\; B(z,u)$. If the spanning property of the above definition holds for every analytic $\mathcal B$-small set $D$, we say that the $\TT$-eigenvectors of $T$ are {$\mathcal B$-perfectly spanning} \emph{for analytic sets}.
\end{remark*}

\smallskip
\begin{definition} We shall say that a family $\mathbf S\subset\ell^\infty(\ZZ_+)$ is \emph{$c_0$-like} if it has the form 
$$\mathbf S=\{ a\in\ell^\infty(\ZZ_+);\; \lim_{n\to\infty} \Phi_n(a)=0\}\, $$
where $(\Phi_n)_{n\geq 0}$ is a uniformly bounded sequence of $w^*$-$\,$continuous semi-norms on $\ell^\infty(\ZZ_+)$. (By ``uniformly bounded", we mean that $\Phi_n(a)\leq C\, \Vert a\Vert_\infty$ for all $n$ and some finite constant $C$). 
\end{definition}

For example, the families $\mathbf S_{\rm strong}$, $\mathbf S_{\rm weak}$ and $\mathbf S_{\rm erg}$ are $c_0$-like: just put $\Phi_n(a)=\vert a_n\vert$ in the strong mixing case, $\Phi_n(a)=\frac{1}{n}\sum_{k=0}^{n-1}\vert a_k\vert$ in the weak mixing case, 
and $\Phi_n(a)=\left\vert \frac{1}{n}\sum_{k=0}^{n-1} a_k\right\vert$ in the ergodic case.

\medskip
Our main result is the following theorem, from which (1) and (2) in Theorem \ref{WS} follow immediately. Recall that a family $\mathbf S\subset \ell^\infty(\ZZ_+)$ is an \emph{ideal} 
if it is a linear subspace and $ua\in \mathbf S$ for any $(u,a)\in\ell^\infty(\ZZ_+)\times\mathbf S$.
\begin{theorem}\label{abstract} Let $X$ be a separable complex Fr\'echet space, and let $T\in\mathfrak L(X)$. Let also $\mathbf S\subset \ell^\infty (\ZZ_+)$. 
Assume that $\mathbf S$ is a  translation-invariant and $c_0$-like {ideal}, and that any $\mathbf S$-continuous measure  
is continuous. If the $\TT$-eigenvectors of $T$ are 
$\mathbf S$-perfectly spanning, then $T$ is $\mathbf S$-mixing in the Gaussian sense.
\end{theorem}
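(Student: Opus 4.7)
The plan is to build a $T$-invariant Gaussian measure $\mu$ on $X$ with full support, whose spectral measures $\sigma_{f,g}^T$ all land in the norm-closed hereditary class $\mathcal B:=\mathcal F_+^{-1}(\mathbf S)$, and to control these spectral measures by going through $\mathbf S$-continuous probability measures on $\TT$ carried by $\mathbf S$-perfect closed subsets.

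First, I would extract a suitable family of $\TT$-eigenvector fields. The $c_0$-like assumption makes $\mathcal B$ norm-closed in $\mathcal M(\TT)$; together with the ideal and translation-invariance assumptions, Lemma \ref{hereditary} gives that $\mathcal B$ is hereditary under absolute continuity, and Lemma \ref{Bperfect} identifies closed $\mathbf S$-perfect subsets of $\TT$ with the supports of $\mathbf S$-continuous probability measures. Using the $\mathbf S$-perfect spanning hypothesis (and, if needed, its analytic-set version as alluded to in the remark after Definition \ref{defBperfspan}), together with a Kuratowski--Ryll-Nardzewski style measurable selection argument in the spirit of \cite{Sophie}, I would produce a sequence of bounded Borel $\TT$-eigenvector fields $E_n\colon\Lambda_n\to X$ defined on closed $\mathbf S$-perfect sets $\Lambda_n\subset\TT$, together with $\mathbf S$-continuous probability measures $\sigma_n$ supported on $\Lambda_n$, such that $\bigcup_n E_n(\Lambda_n)$ has dense linear span in $X$.

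Second, I would construct $\mu$ as follows. Let $(W_n)$ be independent complex Gaussian white noises with control measures $\sigma_n$, and choose positive constants $c_n$ decreasing fast enough to ensure the almost-sure convergence in $X$ of
\[
\xi\;=\;\sum_{n}c_n\int_{\Lambda_n}E_n(\lambda)\,dW_n(\lambda).
\]
Let $\mu$ be the distribution of $\xi$. Then $\mu$ is a Gaussian Borel probability measure on $X$; it has full support because the $E_n(\Lambda_n)$ span densely; and $T$-invariance follows from $TE_n(\lambda)=\lambda E_n(\lambda)$ together with the rotational invariance of the complex Gaussian random measures $W_n$, which gives $T\xi\overset{d}{=}\xi$.

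Third, I would analyse the Koopman operator $V_T$ on $L^2_0(\mu)$ through the Wiener chaos decomposition $L^2_0(\mu)=\bigoplus_{k\ge1}H_k$. On the first chaos $H_1$, the map $x^*\mapsto(\lambda\mapsto x^*(E_n(\lambda)))$ realises a unitary equivalence of $V_T|_{H_1}$ with multiplication by $\lambda$ on a direct sum of the spaces $L^2(\sigma_n)$; hence for $f,g\in H_1$, $\sigma_{f,g}^T$ is absolutely continuous with respect to $\sigma:=\sum 2^{-n}\sigma_n\in\mathcal B$, hence lies in $\mathcal B$ by heredity. On the $k$-th chaos, $V_T$ is realised as multiplication by $\lambda_1\cdots\lambda_k$ on a symmetric tensor power, so the spectral measures are dominated by convolutions $\sigma^{*k}$ on $\TT$. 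Because $\widehat{\sigma^{*k}}(n)=\widehat\sigma(n)^k$ and $\mathbf S$ is an ideal in $\ell^\infty(\ZZ_+)$, $\widehat\sigma|_{\ZZ_+}\in\mathbf S$ forces $\widehat\sigma^k|_{\ZZ_+}\in\mathbf S$, and Lemma \ref{hereditary} again gives $\sigma_{f,g}^T\in\mathcal B$ for $f,g\in H_k$. For arbitrary $f,g\in L^2_0(\mu)$, decompose $f=\sum f_k$, $g=\sum g_k$ and sum: $\sigma_{f,g}^T=\sum_k \sigma_{f_k,g_k}^T$ converges absolutely in $\mathcal M(\TT)$ (since $\sum\|f_k\|\|g_k\|<\infty$) and each summand is in the norm-closed space $\mathcal B$, so $\sigma_{f,g}^T\in\mathcal B$. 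This says that $T$ is $\mathbf S$-mixing with respect to $\mu$, finishing the proof.

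The main obstacle is the first step: turning the abstract $\mathbf S$-perfect spanning property into an explicit countable family of measurable $\TT$-eigenvector fields whose domains are $\mathbf S$-perfect closed sets and whose images span densely. This upgrade demands descriptive-set-theoretic care, especially if one has to pass through the analytic-set variant of the spanning property. The additional subtlety in Step 3 is to justify rigorously that the spectral type of $V_T$ on $H_k$ is controlled by the $k$-fold multiplicative convolution $\sigma^{*k}$ on $\TT$; the hypothesis that $\mathbf S$-continuous measures be actually continuous is needed so that these convolutions remain well-behaved, and the ideal property is exactly what closes the argument on each higher chaos.
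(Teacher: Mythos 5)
Your Step 3 (Wiener chaos, the ideal property giving $[\mathcal F_+(\sigma_{f,g})]^k\in\mathbf S$, norm-closedness to sum over chaoses) is essentially the paper's Lemma \ref{back1}, and your Step 1 is close to Proposition \ref{perfect}. But there is a genuine gap in Step 2, and it is precisely the point of Theorem \ref{abstract}. The almost-sure convergence in $X$ of the stochastic integral $\int_{\Lambda_n}E_n(\lambda)\,dW_n(\lambda)$ is, by definition, the statement that the operator $K_{E_n}:L^2(\Lambda_n,\sigma_n)\to X$ is gamma-radonifying. For a merely continuous (or bounded Borel) eigenvector field on a general Banach or Fr\'echet space this is \emph{false} in general -- it holds automatically only when $X$ has type $2$, which is exactly why that hypothesis appears in Theorem \ref{abstracteasy} and is absent here. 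Moreover, this failure cannot be repaired by choosing the constants $c_n$ small: for each fixed $n$ the Gaussian series defining $c_n\int E_n\,dW_n$ converges a.s.\ if and only if the one defining $\int E_n\,dW_n$ does, so scaling only helps with the outer sum over $n$, not with the convergence of each individual integral. As written, your argument proves Theorem \ref{abstracteasy} but not Theorem \ref{abstract}.

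The paper's proof exists to resolve exactly this tension (see the discussion ``Why things may not be obvious'' in Section \ref{WMcase}): conditions (a) and (c) of Proposition \ref{background} pull in opposite directions. The resolution is to abandon the spaces $(\Lambda_n,\sigma_n)$ and instead build, by a Cantor-type induction (Lemma \ref{superkey}), a probability space $\Omega(\mathbf q)$ which is a product of disjoint unions of finite cyclic groups, a \emph{super-Lipschitz} eigenfield $E$ on it (whose generalized Fourier series then converges absolutely, forcing $K_E$ to be gamma-radonifying by Lemma \ref{halfkey}), and simultaneously a homeomorphic embedding $\phi:\Omega(\mathbf q)\to\TT$ whose pushforward measure is $\mathbf S$-continuous. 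This is also where the hypotheses you cite are actually consumed: the $c_0$-like assumption furnishes the uniformly bounded $w^*$-continuous semi-norms $\Phi_k$ that control the inductive construction of the limit measure, and the assumption that $\mathbf S$-continuous measures are continuous is used (via Liapounoff's theorem) to split supports into pieces of equal measure -- neither is needed for the convolution-power step on the higher chaoses, where the ideal property alone suffices. Without some substitute for this construction, the proposal does not establish the theorem.
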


\begin{remark*} This theorem cannot be applied to the ergodic case, for two reasons: the family $\mathbf S_{\rm erg}$ is not an ideal of $\ell^\infty (\ZZ_+)$, and $\mathbf S_{\rm erg}$-continuous 
measures need not be continuous (a measure $\sigma$ is $\mathbf S_{\rm erg}$-continuous iff $\sigma(\{ \mathbf 1\})=0$).
\end{remark*}

\smallskip
Theorem \ref{abstract} will be proved in section \ref{proofabstract1}. The following much simpler converse result (which corresponds to (3) in Theorem \ref{WS}) will be proved in section \ref{proofabstract2}. 
\begin{proposition}\label{converse} Let $X$ be a separable complex Banach space, and assume that $X$ has cotype 2. Let also $\mathbf S$ be an arbitrary subset of $\ell^\infty(\ZZ_+)$. If $T\in\mathfrak L(X)$ is 
$\mathbf S$-mixing in the Gaussian sense, then the $\TT$-eigenvectors of $T$ are 
$\mathbf S$-perfectly spanning for analytic sets.
\end{proposition}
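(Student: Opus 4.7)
I would argue by contradiction via Hahn--Banach. Suppose $T$ is $\mathbf{S}$-mixing with respect to some Gaussian measure $\mu$ on $X$ of full support but, for some analytic $\mathbf{S}$-small set $D\subset\TT$, the subspace $Y:=\overline{\mathrm{span}}\,\bigcup_{\lambda\in\TT\setminus D}\ker(T-\lambda)$ is a proper subspace of $X$. Pick $x^*\in X^*\setminus\{0\}$ vanishing on $Y$. The contradiction will come from the spectral measure $\sigma:=\sigma^T_{x^*,x^*}\in\Sigma(T,\mu)$, which the plan is to show is simultaneously nonzero and carried by $D$, while $\sigma(D)=0$.

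Since $\mu$ is centered Gaussian, $x^*\in L^2_0(\mu)$, so $\sigma$ is a positive finite measure with $\sigma(\TT)=\Vert x^*\Vert_{L^2(\mu)}^2$; this is strictly positive because $\mu$ has full support and $x^*\neq 0$. By $\mathbf{S}$-mixing, $\sigma$ is $\mathbf{S}$-continuous. Since $D$ is analytic and $\mathbf{S}$-small, the definition of $\mathbf{S}$-smallness supplies a Borel set $\widetilde D\supseteq D$ with $\sigma(\widetilde D)=0$; in particular $\sigma(D)=0$.

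The main task is then to show that $\sigma$ is \emph{carried by} $D$. This is the step where the cotype 2 hypothesis intervenes, via the Gaussian spectral representation of $\mu$ (this is essentially the content of \cite[Theorem 4.1]{BG2} in the weak-mixing case, and the argument adapts identically to the $\mathbf{S}$-mixing setting). Concretely, because $X$ has cotype $2$ and $\mu$ is a $T$-invariant Gaussian measure on $X$, one can realize $\xi\sim\mu$ as a convergent series $\xi=\sum_n\int_\TT E_n(\lambda)\,dW_n(\lambda)$, where each $E_n:\TT\to X$ is a Borel $\TT$-eigenvector field of $T$ (so $TE_n(\lambda)=\lambda E_n(\lambda)$) and the $W_n$ are independent complex Gaussian white noises with finite base measures $\rho_n$ on $\TT$. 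This representation decomposes the Koopman isometry $V_T$ acting on the first Gaussian chaos $\mathcal{H}_\mu=\overline{X^*}^{L^2(\mu)}$ spectrally and transfers the spectral resolution back to $X$ via the Pettis integral map $J:\mathcal{H}_\mu\to X$, $J(h)=\int y\,h(y)\,d\mu(y)$, which intertwines $V_T$ with $T$; cotype $2$ enters as the geometric condition ensuring that spectral atoms of $V_T|_{\mathcal{H}_\mu}$ are transferred by $J$ to $X$-valued $\TT$-eigenvectors (equivalently, that the vector-valued Gaussian integrals $\int E_n\,dW_n$ converge in the Banach space $X$).

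Granted the representation, the conclusion is immediate. Since $x^*$ vanishes on $\ker(T-\lambda)$ for $\lambda\notin D$, we have $x^*(E_n(\lambda))=0$ for $\lambda\in\TT\setminus D$ and every $n$. For each $k\geq 0$ one then computes
$$\widehat\sigma(k)=\EE\bigl[x^*(T^k\xi)\,\overline{x^*(\xi)}\bigr]=\sum_n\int_\TT\lambda^k\,|x^*(E_n(\lambda))|^2\,d\rho_n(\lambda)=\sum_n\int_D\lambda^k\,|x^*(E_n(\lambda))|^2\,d\rho_n(\lambda),$$
so $\sigma$ is carried by $D$. Combined with $\sigma(D)=0$, this yields $\sigma(\TT)=0$, i.e.\ $x^*=0$ in $L^2(\mu)$, hence in $X^*$ (by full support of $\mu$), contradicting the choice of $x^*$. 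The main technical obstacle is clearly the cotype 2 Gaussian spectral representation itself; once this is in place, the $\mathbf{S}$-mixing hypothesis is used only in the short computation above, via the definition of $\mathbf{S}$-smallness.
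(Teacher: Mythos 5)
Your argument is correct and rests on exactly the same key ingredient as the paper's proof: writing $\mu=\mu_K$ with $TK=KM$ for a unitary $M$, diagonalizing $M$ as a multiplication operator $M_\phi$ on some $L^2(\Omega,m)$, and invoking cotype 2 (Proposition \ref{typecotype}) to realize the gamma-radonifying covariance factorization as $K=K_E$ for a measurable $\TT$-eigenfield $(E,\phi)$ --- which is precisely your white-noise representation $\xi=\sum_n\int_\TT E_n\,dW_n$. The only (cosmetic) difference is the final step: the paper shows directly that $E$ vanishes $m$-a.e.\ on $\phi^{-1}(D)$ (via Lemma \ref{back2}) and then uses that $E$ is $m$-spanning, whereas you dualize through a Hahn--Banach functional $x^*$ and the scalar spectral measure $\sigma_{x^*,x^*}$, which is simultaneously carried by $D$ and annihilates $D$; these are equivalent formulations of the same computation.
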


\begin{remark*} Two ``trivial" cases are worth mentioning. If we take $\mathbf S=\mathbf S_{\rm erg}$, then $\mathcal F_+^{-1}(\mathbf S)=\{ \sigma\in\mathcal M(\TT);\; \sigma (\{\mathbf 1\})=0\}$ and hence a set $D\subset\TT$ is $\mathbf S$-small if and only if $D\subset\{\mathbf 1\}$. 
If we take $\mathbf S=\ell^\infty (\ZZ_+)$, then $\mathcal F_+^{-1}(\mathbf S)=\mathcal M(\TT)$ and hence the only $\mathbf S$-small set is $D=\emptyset$. So, assuming that $X$ has cotype $2$, we get the following (known) results:
 if $T$ admits an invariant Gaussian measure with full support, then the $\TT$-eigenvectors of $T$ span a dense subspace of $X$; 
and if $T$ is ergodic in the Gaussian sense, then $\ov{\rm span}\,\left(\bigcup_{\lambda\in\TT\setminus\{ \mathbf 1\}}\ker(T-\lambda)\right)=X$. As already pointed out,  much more is true in the ergodic case: it is proved in \cite[Theorem 4.1]{BG2} that if $T$ is ergodic 
in the Gaussian sense, then in fact the $\TT$-eigenvectors are perfectly spanning (provided that $X$ has cotype 2).
\end{remark*}

\smallskip
Our last result (to be proved also in section \ref{proofabstract2}) says that when the space $X$ is well-behaved, the assumptions in Theorem \ref{abstract} can be relaxed: it is no longer necessary to assume that the family $\mathbf S$ is $c_0$-like, nor that $\mathbf S$-continuous measures are continuous. However, the price to pay is that 
one has to use the strengthened version of $\mathbf S$-perfect spanning. 

\begin{theorem}\label{abstracteasy}
Let $X$ be a separable complex Banach space, and assume that $X$ has \emph{type 2}. Let also $\mathbf S$ be a norm-closed, translation-invariant ideal of $\ell^\infty(\ZZ_+)$, and let $T\in\mathfrak L(X)$. If the $\TT$-eigenvectors of $T$ are 
$\mathbf S$-perfectly spanning for analytic sets, then $T$ is $\mathbf S$-mixing in the Gaussian sense.
\end{theorem}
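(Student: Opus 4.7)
The plan is to construct a $T$-invariant Gaussian measure $\mu$ with full support, relative to which $T$ is $\mathbf S$-mixing, by producing explicit stochastic integrals built from $\TT$-eigenvector fields and $\mathbf S$-continuous probability measures on $\TT$. Type 2 enters precisely at the point where one needs to integrate Banach-valued functions against a Gaussian random measure: in a type 2 space, any $F \in L^2(\sigma, X)$ is integrable against Gaussian white noise of intensity $\sigma$, and $\int F\, dW$ is an honest Gaussian random variable in $X$. This bypasses the delicate chaos-by-chaos/covariance analysis needed in the general case of Theorem \ref{abstract}.

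First, using the $\mathbf S$-perfect spanning for analytic sets, I would extract (as in Proposition \ref{perfect}, referenced in the remark following Corollary \ref{eigvectfield}) a countable family of compact $\mathbf S$-perfect sets $\Lambda_n \subset \TT$ together with continuous $\TT$-eigenvector fields $E_n : \Lambda_n \to X$ such that $\overline{\mathrm{span}} \bigcup_n E_n(\Lambda_n) = X$; here the move from the analytic spanning hypothesis to continuous fields on $\mathbf S$-perfect sets uses a measurable selection argument combined with the fact that a compact $\mathbf S$-perfect set is not $\mathbf S$-small. Next, by Lemma \ref{hereditary} the class $\mathcal F_+^{-1}(\mathbf S)$ is hereditary with respect to absolute continuity, so by Lemma \ref{Bperfect} each $\Lambda_n$ is the support of an $\mathbf S$-continuous probability measure $\sigma_n$. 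Since $E_n$ is bounded on $\Lambda_n$, $E_n \in L^2(\sigma_n, X)$. Invoking type 2, I define independent Gaussian random variables $\xi_n = \int E_n\, dW_n$ in $X$ (with $W_n$ Gaussian white noise of intensity $\sigma_n$), choose weights $c_n > 0$ decreasing fast enough that $\xi := \sum c_n \xi_n$ converges almost surely in $X$, and let $\mu$ be the distribution of $\xi$.

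To verify $T$-invariance, I compute the covariance of $\mu$ as $K(x^*, y^*) = \sum_n c_n^2 \int_{\Lambda_n} \langle x^*, E_n(\lambda)\rangle \overline{\langle y^*, E_n(\lambda)\rangle}\, d\sigma_n(\lambda)$; replacing $(x^*, y^*)$ by $(T^* x^*, T^* y^*)$ brings out a factor $|\lambda|^2 = 1$, so $K$ is unchanged. The support of $\mu$ contains the closed span of $\bigcup_n E_n(\Lambda_n)$, hence equals $X$. For $\mathbf S$-mixing, the first Wiener chaos of $\mu$ yields spectral measures $\sigma_{f,g}^T$ which are countable sums of measures of the form $c_n^2 \langle x^*, E_n(\lambda)\rangle \overline{\langle y^*, E_n(\lambda)\rangle}\, d\sigma_n(\lambda)$; each such measure is absolutely continuous with respect to $\sigma_n$, hence lies in $\mathcal F_+^{-1}(\mathbf S)$ by the hereditary property, and $\mathcal F_+^{-1}(\mathbf S)$ is norm-closed so the sum is again $\mathbf S$-continuous. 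For higher-order chaos, $V_T$ acts as a symmetric tensor power of its restriction to the first chaos, producing spectral measures that are convolutions of the $\sigma_n$'s; since $\mathbf S$ is an \emph{ideal} of $\ell^\infty(\ZZ_+)$ and $\widehat{\sigma*\tau}(k) = \widehat\sigma(k)\widehat\tau(k)$, these convolutions remain $\mathbf S$-continuous. Summing over the chaos decomposition of $L^2_0(\mu)$ and using norm-closedness one last time gives $\Sigma(T, \mu) \subset \mathcal F_+^{-1}(\mathbf S)$, proving $\mathbf S$-mixing.

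The main obstacle is the first step: extracting continuous eigenvector fields on $\mathbf S$-perfect sets from the abstract analytic spanning hypothesis. This is where the \emph{analytic} version of $\mathbf S$-perfect spanning (as opposed to the Borel version) is indispensable, because one applies a Jankov-von~Neumann style selection to the analytic set $\{(\lambda, x) \in \TT \times X : Tx = \lambda x, \|x\| = 1\}$ and then must exhaust $X$ while keeping the parameter sets $\mathbf S$-perfect; this is precisely what Proposition \ref{perfect} provides and what distinguishes the type 2 setting of Theorem \ref{abstracteasy} from the more delicate Theorem \ref{abstract}, where the analogous selection must be done simultaneously with an approximation argument to handle the $c_0$-like structure of $\mathbf S$.
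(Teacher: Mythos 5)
Your proposal is correct and follows essentially the same route as the paper: Proposition \ref{perfect} together with Lemmas \ref{hereditary} and \ref{Bperfect} produces the continuous eigenvector fields on $\mathbf S$-perfect supports of $\mathbf S$-continuous probability measures, type 2 makes the associated operators $K_{E_n}$ gamma-radonifying (your stochastic integral $\int E_n\, dW_n$ is the same object), and your chaos-by-chaos verification of $\mathbf S$-mixing, using the hereditary property on the first chaos and the ideal property for the higher chaoses, is precisely the content of Lemmas \ref{back1} and \ref{back2} as packaged in Proposition \ref{background}. There is no substantive difference in approach.
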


\smallskip
Since Hilbert space has both type 2 and cotype 2, we immediately deduce
\begin{corollary}\label{Smix=span} Let $\mathbf S$ be a norm-closed, translation-invariant ideal of $\ell^\infty(\ZZ_+)$. For Hilbert space operators, $\mathbf S$-mixing in the Gaussian sense is equivalent to the $\mathbf S$-spanning property of $\TT$-eigenvectors 
(for analytic sets).
\end{corollary}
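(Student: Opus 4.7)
The plan is to simply combine the two previous results: Theorem \ref{abstracteasy} gives the sufficient direction assuming type $2$, while Proposition \ref{converse} gives the necessary direction assuming cotype $2$. Since Hilbert space enjoys both properties simultaneously, both implications apply without further work, and the corollary falls out as an equivalence.

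More precisely, let $\mathcal H$ be a (complex, separable, infinite-dimensional) Hilbert space and let $T\in\mathfrak L(\mathcal H)$. For the implication ``$\mathbf S$-perfect spanning (for analytic sets) $\Rightarrow$ $\mathbf S$-mixing in the Gaussian sense'', I would invoke Theorem \ref{abstracteasy} directly: its hypotheses on $\mathbf S$ (norm-closed, translation-invariant ideal of $\ell^\infty(\ZZ_+)$) are precisely those of the corollary, and the required geometric hypothesis on $X$ (type 2) is satisfied by $\mathcal H$. For the reverse implication, I would apply Proposition \ref{converse}: no restriction is placed on the family $\mathbf S\subset\ell^\infty(\ZZ_+)$ there, and $\mathcal H$ has cotype 2, so any operator $\mathbf S$-mixing in the Gaussian sense automatically has $\mathbf S$-perfectly spanning $\TT$-eigenvectors for analytic sets.

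There is essentially no obstacle here; the only ``conceptual'' point is to notice that the notion of $\mathbf S$-perfect spanning for analytic sets appears on both sides of the equivalence with exactly the same quantification over analytic $\mathbf S$-small sets $D\subset\TT$, so that the two previous results match up cleanly in the Hilbert setting. Thus the corollary is nothing more than a bookkeeping statement once Theorem \ref{abstracteasy} and Proposition \ref{converse} are available.
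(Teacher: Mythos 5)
Your proposal is correct and is exactly the paper's argument: the corollary is stated immediately after Theorem \ref{abstracteasy} with the remark that Hilbert space has both type 2 and cotype 2, so one direction follows from Theorem \ref{abstracteasy} and the other from Proposition \ref{converse}. Nothing further is needed.
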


\smallskip
\begin{rem1} The reader should wonder why analytic sets appear in Theorem \ref{abstracteasy}, whereas only Borel sets are needed in Theorem \ref{abstract}. The reason is that the family of $\mathbf S$-small sets has a quite special structural property (the so-called \emph{covering property}) 
when the family $\mathbf S$ is $c_0$-like: any analytic $\mathbf S$-small set can be covered by a sequence of \emph{closed} $\mathbf S$-small sets (see \cite{MZ}). In particular, any analytic $\mathbf S$-small set is contained in a Borel $\mathbf S$-small set and hence  the two notions of $\mathbf S$-perfect spanning are equivalent. 

The covering property is of course trivially satisfied in the weak mixing case, \mbox{i.e.} for the family of countable sets. In the strong mixing case, \mbox{i.e.} for the sets 
of extended uniqueness, this is a remarkable result due to G. Debs and J. Saint Raymond (\cite{DSR}). 
\end{rem1}

\begin{rem2} The proof of Theorem \ref{abstracteasy} turns out to be considerably simpler than that of Theorem \ref{abstract}. Roughly speaking, the reason is the following. Without any assumption on $X$, \mbox{i.e.} in the case of Theorem \ref{abstract}, we will have to be extremely careful to ensure that some ``integral" operators of a certain kind are \emph{gamma-radonifying} (see sub-section \ref{gamrad}). On the other hand, such integral operators are {automatically} gamma-radonifying when $X$ is a Banach space with type 2. So the main technical difficulty just disappears, and the proof becomes rather easy.
\end{rem2}

\section{background}

Throughout this section, $X$ is a separable complex Fr\'echet space.

\subsection{Gaussian measures and gamma-radonifying operators}\label{gamrad} This sub-section is devoted to a brief review of the basic facts that we shall need concerning Gaussian measures. For a reasonably self-contained exposition specifically tailored to linear dynamics, we refer 
to \cite{BM}; and for in-depth studies 
of Gaussian measures, to the books \cite{Bo} and \cite{CTV}.

\medskip
By a \emph{Gaussian measure} on $X$, we mean a Borel probability measure $\mu$ on $X$ which is the distribution of a random variable of the form 
$$\sum_{n=0}^\infty g_n x_n\, $$
where  $(g_n)$ is a standard complex Gaussian sequence defined on some probability space $(\Omega,\mathfrak A,\mathbb P)$ and $(x_n)$ is a sequence in $X$ such that the series $\sum g_n x_n$ is almost surely 
convergent. 

We recall that  $(g_n)$ is a standard complex Gaussian sequence if the $g_n$ are independent complex-valued random variables with distribution $\gamma_\sigma\otimes\gamma_\sigma$, where 
$\gamma_\sigma=\frac{1}{\sqrt{2\pi}\sigma}e^{-t^2/2\sigma^2}\, dt$ is the usual Gaussian distribution with mean $0$ and variance $\sigma^2=1/2$. This implies in particular that 
$\mathbb Eg_n=0$ and $\mathbb E\vert g_n\vert^2=1$.

\medskip
``Our" definition of a Gaussian measure is not the usual one. However, it is indeed equivalent to the classical definition, \mbox{i.e.} each continuous linear functional $x^*$ 
has a complex symmetric Gaussian distribution when considered as a random variable on the probability space $(X,\mathfrak B(X),\mu)$ (where $\mathfrak B(X)$ is the Borel sigma-algebra of $X$).

\medskip
It is well known that for a Fr\'echet space valued Gaussian series $\sum g_n x_n$, almost sure convergence is equivalent to convergence in the $L^p$ sense, for any $p\in [1,\infty[$. 
It follows at once that if $\mu\sim \sum_{n\geq 0} g_n x_n$ is a Gaussian measure on $X$, 
then $\int_X \Vert x\Vert^2\, d\mu(x)<\infty$ for every continuous semi-norm $\Vert\,\cdot\,\Vert$ on $X$. In particular, any continuous linear functional $x^*\in X^*$ is in $L^2(\mu)$ when considered as a random variable on 
$(X,\mu)$. We also note that we consider only \emph{centred} Gaussian measures $\mu$, \mbox{i.e.} $\int_X x\, d\mu(x)=0$. 

\medskip
Gaussian measures correspond in a canonical way to \emph{gamma-radonifying operators}. Let $\mathcal H$ be a Hilbert space. An operator $K:\mathcal H\to X$ is said to be gamma-radonifying 
if for some (equivalently, for any) orthonormal basis $(e_n)_{n\geq 0}$ of $\mathcal H$, the Gaussian series $\sum g_n K(e_n)$ is almost surely convergent. By rotational invariance of the Gaussian variables $g_n$, the distribution of the random variable 
$\sum_{n\geq 0} g_n K(e_n)$ does not depend on the orthonormal basis $(e_n)$, so the operator $K$ gives rise to 
a Gaussian measure which may be denoted by $\mu_K$:
$$\mu_K\sim \sum g_n K(e_n)\, .$$
Conversely, it is not hard to show that any Gaussian measure $\mu\sim\sum g_n x_n$ is induced by some gamma-radonifying operator.

\smallskip
The following examples are worth keeping in mind: when $X$ is a Hilbert space, an operator $K:\mathcal H\to X$ is gamma-radonifying if and only if it is a \emph{Hilbert-Schmidt} operator; and for an arbitrary $X$, the operator $K$ is gamma-radonifying as soon 
as $\sum_0^\infty \Vert K(e_n)\Vert<\infty$, for some orthonormal basis of $\mathcal H$ and every continuous semi-norm $\Vert\,\cdot\,\Vert$ on $X$. This follows at once from the equivalence between almost sure convergence and $L^2$ or $L^1$ convergence for Gaussian series.

\medskip
We note that the support of a Gaussian measure $\mu\sim\sum_{n\geq 0} g_n x_n$
 is the closed linear span of the $x_n$. Therefore, a Gaussian measure $\mu=\mu_K$ has full support if and only if the gamma-radonifying operator $K:\mathcal H\to X$ has dense range.

\medskip
If $K:\mathcal H\to X$ is a continuous linear operator from some Hilbert space $\mathcal H$ into $X$, we consider its adjoint $K^*$ as an operator from $X^*$ into $\mathcal H$. Hence, $K^*:X^*\to \mathcal H$ is a \emph{conjugate-linear} operator.
If $K$ is gamma-radonifying and 
$\mu=\mu_K$ is the associated Gaussian measure, then we have the following fundamental identity:
$$\langle x^*, y^*\rangle_{L^2(\mu)}=\langle K^*y^*, K^*x^*\rangle_{\mathcal H}$$
for any $x^*,y^*\in X^*$. The proof is a simple computation using the orthogonality of the Gaussian variables $g_n$.

\medskip
Let us also recall the definition of the \emph{Fourier transform} of a Borel probability measure $\mu$ on $X$: this is the complex-valued function defined on the dual space $X^*$ by 
$$\widehat\mu (x^*)=\int_X e^{-i{\rm Re}\, \langle x^*, x\rangle}d\mu (x)\, .$$

One very important property of the Fourier transform is that it uniquely determines the measure: if $\mu_1$ and $\mu_2$ have the same Fourier transform, then $\mu_1=\mu_2$.

\medskip
If $\mu\sim \sum_{n\geq 0} g_n x_n$ is  a Gaussian measure, a simple computation shows that $\widehat\mu (x^*)=\exp\left(-\frac{1}{4} \sum_{0}^\infty \vert\langle x^*, x_n\rangle\vert^2\right)$ for any $x^*\in X^*$. It follows that 
if $K:\mathcal H\to X$ is a gamma-radonifying operator then 
$$\widehat{\mu_K}(x^*)=e^{-\frac14{\Vert K^*x^*\Vert^2}}\, .$$
In particular, two gamma-radonifying operators $K_1$ and $K_2$ define the same Gaussian measure if and only if 
$$\Vert K_1^*x^*\Vert= \Vert K^*_2x^*\Vert$$
for every $x^*\in X^*$.

\medskip
Finally, let us say a few words about type 2 and cotype 2 Banach spaces. A Banach space $X$ has \emph{type 2} if 
$$ \mathbb E\left\Vert \sum_ng_n x_n\right\Vert\leq C\left( \sum_n \Vert x_n\Vert^2\right)^{1/2}$$
for some finite constant $C$ and any finite sequence $(x_n)$ in $X$; and $X$ has \emph{cotype 2} if the reverse inequality holds. Thus, type 2 makes the convergence of Gaussian series easier, whereas on a cotype 2 space the convergence 
of such a series has strong implications. This is apparent in the following proposition, which contains all the results that we shall need concerning these notions. (See (\ref{defke}) below for the definition of the operators $K_E$).

\begin{proposition}\label{typecotype} Let $X$ be a separable Banach space.
\begin{enumerate}
\item[\rm (1)] If $X$ has type 2 then any operator $K_E:L^2(\Omega,m)\to X$ is gamma-radonifying.
\item[\rm (2)] If $X$ has cotype 2 then any gamma-radonifying operator $K:L^2(\Omega, m)\to X$ has the form $K_E$, for some vector field $E:\Omega\to X$.
\end{enumerate}
\end{proposition}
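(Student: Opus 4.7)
The plan is to reduce both parts to an explicit computation on \emph{simple} vector fields, via the integral operator $K_E(f) := \int_\Omega f(\omega)\, E(\omega)\, dm(\omega)$, and then transfer the outcome to general $E$ by a density argument. For a simple field $E = \sum_{j=1}^N \mathbf{1}_{A_j} x_j$ with pairwise disjoint $A_j$ of finite measure, I would observe that, for any orthonormal basis $(e_n)$ of $L^2(\Omega, m)$, the scalar Gaussians $h_j := \sum_n g_n \langle e_n, \mathbf{1}_{A_j}\rangle$ are jointly Gaussian and uncorrelated (by orthogonality of the $\mathbf{1}_{A_j}$) with $\mathbb{E}|h_j|^2 = m(A_j)$, hence independent. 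Rotational invariance then gives
$$\sum_n g_n K_E(e_n) \;=\; \sum_j h_j\, x_j \;\stackrel{d}{=}\; \sum_j \sqrt{m(A_j)}\; g'_j\, x_j$$
for independent standard complex Gaussians $(g'_j)$. Since $\|E\|_{L^2(\Omega, m; X)}^2 = \sum_j m(A_j) \|x_j\|^2$, the type 2 inequality yields $\|K_E\|_\gamma \leq T_2(X)\, \|E\|_{L^2(\Omega, m; X)}$, while the cotype 2 inequality gives the reverse bound $\|E\|_{L^2(\Omega, m; X)} \leq C_2(X)\, \|K_E\|_\gamma$.

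With this in hand, Part (1) reduces to approximating $E \in L^2(\Omega, m; X)$ by simple vector fields $E^{(k)}$ in Bochner norm. The type 2 estimate forces $(K_{E^{(k)}})$ to be Cauchy in the complete space of gamma-radonifying operators, and its limit there must agree with $K_E$ by continuity of $F \mapsto K_F$ into the operator norm topology. For Part (2), given a gamma-radonifying $K$ with $x_n := K(e_n)$, I would consider the finite-rank truncations $K_N := K \circ P_N$ onto the span of $e_1, \ldots, e_N$. These converge to $K$ in $\gamma$-norm because $\sum g_n x_n$ converges in $L^2(\mathbb{P}; X)$. Each $K_N$ can be exhibited as $K_{E^{(N)}}$ for a simple step field $E^{(N)}$, obtained by discretizing the formal expression $\omega \mapsto \sum_{n \leq N} \overline{e_n(\omega)}\, x_n$ on a sufficiently fine finite $\sigma$-algebra. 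The cotype 2 estimate then makes $(E^{(N)})$ Cauchy in $L^2(\Omega, m; X)$, so it converges to some $E$, and a routine check gives $K_E = K$.

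The main obstacle is the last step of Part (2): the cotype 2 bound is directly available only on genuinely simple step fields, whereas the natural candidate $\omega \mapsto \sum_{n \leq N} \overline{e_n(\omega)}\, x_n$ is a \emph{continuous} superposition of orthogonal scalar functions against vector coefficients, and its pointwise $X$-norm is not transparently controlled by $\mathbb{E}\|\sum g_n x_n\|^2$. I would resolve this by interleaving two approximations---a refinement of the basis functions $e_n$ by step functions on a common finite partition, together with the truncation in $n$---so that at each finite stage the cotype 2 inequality applies in its clean step-function form; the uniformity provided by that bound then permits passage to the limit in $L^2(\Omega, m; X)$ and identifies the limit as the desired $E$.
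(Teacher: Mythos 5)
The paper gives no proof of Proposition \ref{typecotype} at all --- it simply refers the reader to \cite{BM} --- and your argument is essentially the standard one found there: the two-sided comparison between $\Vert K_E\Vert_\gamma$ and $\Vert E\Vert_{L^2(\Omega,m;X)}$ on simple step fields (rotation invariance plus independence of the Gaussian coefficients $h_j$, which follows from orthogonality of the disjoint indicators), combined with density of simple fields in $L^2(\Omega,m;X)$ for (1) and with $\gamma$-norm density of the finite-rank truncations $KP_N$ for (2). The interleaved discretization you propose for the delicate step does go through --- conditioning the $e_n$, $n\le N$, on a refining sequence of finite partitions makes the covariance matrices of the resulting Gaussian coefficients converge to the identity, so the $\gamma$-norms converge and the cotype~2 bound extends from step fields to the fields $\sum_{n\le N}\overline{e_n}\,x_n$ --- and the proof is correct.
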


These results are nontrivial (see \mbox{e.g.} \cite{BM}), but quite easy to use.

\subsection{The general procedure} Most of what is needed for the the proof of Theorem \ref{abstract} is summarized in Proposition \ref{background} below, whose statement requires to 
introduce some terminology. Recall that all measure spaces under consideration are \emph{sigma-finite}, and that all $L^2$ spaces are separable. 

\medskip
Let $(\Omega ,\mathfrak A,m)$ be a measure space. By a ($X$-valued) \emph{vector field} on $(\Omega, \mathfrak A,m)$ we mean any measurable map 
$E:\Omega\to X$ which is in $L^2(\Omega,m, X)$. Such a vector field gives rise to an operator $K_E:L^2(\Omega,m)\to X$ defined as follows:

\begin{equation}\label{defke} K_E f=\int_\Omega f(\omega) E(\omega)\, dm(\omega)\, .
\end{equation}

It is easy to check that the operator $K_E$ has dense range if and only if the vector field $E$ is \emph{$m$-spanning} in the following sense: for any measurable set $\Delta\subset \Omega$ with $m(\Delta)=0$, the linear span 
of $\{ E(\omega);\; \omega\in\Omega\setminus\Delta\}$ is dense in $X$. This happens in particular if $\Omega$ is a topological space, $m$ is a Borel measure with full support, and the vector field $E$ is continuous with $\ov{\rm span}\, E(\Omega)=X$.

\smallskip
We also note that the operator $K_E$ is always compact, and that it is a Hilbert-Schmidt operator if $X$ is  Hilbert space (because $E$ is in $L^2(\Omega,m,X)$). Moreover, the adjoint operator $K_E^*:X^*\to L^2(\Omega, m)$ is given by the formula
$$K_E^*x^*=\overline{\langle x^*, E(\,\cdot\,)\rangle}\, .$$

\smallskip
The next definition will be crucial for our purpose.
\begin{definition} Let $T\in\mathfrak L(X)$. By a \emph{$\TT$-eigenfield} for $T$ on $(\Omega ,\mathfrak A, m)$ we mean a pair of maps $(E,\phi)$ where 
\begin{itemize}
\item $E:\Omega \to X$ is a vector field;
\item $\phi:\Omega\to \TT$ is measurable;
\item $TE(\omega)=\phi (\omega) E(\omega)$ for every $\omega\in \Omega$.
\end{itemize}
\end{definition}

\smallskip
For example, if $E:\Lambda\to X$ is a continuous $\TT$-eigenvector field for $T$ defined on some compact set $\Lambda\subset\TT$ and if we put $\phi (\lambda)=\lambda$, then $(E,\phi)$ is a $\TT$-eigenfield for $T$ on $(\Lambda, \mathfrak B(\Lambda), m)$ 
for any Borel probability measure $m$ on $\Lambda$.

\medskip
The key fact about $\TT$-eigenfields is the following: if $(E,\phi)$ is a $\TT$-eigenfield for $T$ on $(\Omega,m)$ and if $M_\phi=L^2(\Omega, m)\to L^2(\Omega, m)$ is the (unitary) multiplication operator associated with $\phi$, then 
the \emph{intertwining equation}
$$TK_E=K_E M_\phi$$
holds. The proof is immediate.

\begin{proposition}\label{background} Let $\mathbf S$ be a norm-closed ideal of $\ell^\infty (\ZZ_+)$, and let $T\in\mathcal L(X)$. Assume that one can find a $\TT$-eigenfield $(E,\phi)$ for $T$ defined on some measure space $(\Omega, m)$, such that
\begin{itemize}
\item[\rm (a)] the operator $K_E : L^2(\Omega, m)\to X$ is gamma-radonifying;
\item[\rm (b)]  the vector field $E$ is $m$-spanning;
\item[\rm (c)] for any $f\in L^1(\Omega, m)$, the image measure $\sigma_f=(f\, m)\circ\phi^{-1}$ is $\mathbf S$-continuous.
\end{itemize}

\noindent
Then $T$ is $\mathbf S$-mixing in the Gaussian sense. More precisely, $T$ is $\mathbf S$-mixing with respect to $\mu_{K_E}$.
\end{proposition}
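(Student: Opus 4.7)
The plan is to take $\mu := \mu_{K_E}$ as the required Gaussian measure and verify every property by pushing things through the intertwining equation $TK_E = K_E M_\phi$, so that all computations happen on the very concrete operator $M_\phi$ on $L^2(\Omega,m)$. Assumption~(a) makes $\mu$ a bona fide centred Gaussian measure on $X$ with Fourier transform $\widehat\mu(x^*) = \exp\bigl(-\tfrac14\|K_E^*x^*\|_{L^2(m)}^2\bigr)$; assumption~(b) forces $K_E$ to have dense range, whence $\mathrm{supp}(\mu)=X$. The $T$-invariance of $\mu$ drops out of the dualised intertwining $K_E^*T^* = M_{\bar\phi}K_E^*$ combined with the unitarity of $M_{\bar\phi}$ on $L^2(m)$ (since $|\phi|\equiv 1$): this gives $\|K_E^*T^*x^*\|=\|K_E^*x^*\|$, hence $\widehat{\mu\circ T^{-1}}=\widehat\mu$.

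The first substantive step is the computation of the spectral measure on the ``first Wiener chaos''. For $x^*,y^*\in X^*$ regarded as elements of $L^2_0(\mu)$, the fundamental identity combined with the intertwining gives, for every $n\geq 0$,
\[
\langle V_T^n x^*,y^*\rangle_{L^2(\mu)}
= \int_\Omega \phi(\omega)^n\,(K_E^*y^*)(\omega)\,\overline{(K_E^*x^*)(\omega)}\, dm(\omega),
\]
which is the $n$-th Fourier coefficient of the push-forward $(h\cdot m)\circ\phi^{-1}$ with $h:=(K_E^*y^*)\,\overline{(K_E^*x^*)}\in L^1(m)$ (by Cauchy--Schwarz). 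By hypothesis~(c), this push-forward lies in $\mathcal F_+^{-1}(\mathbf S)$, so $\sigma_{x^*,y^*}$ is $\mathbf S$-continuous.

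To extend to all of $L^2_0(\mu)$, I would invoke the Wiener chaos decomposition for the complex Gaussian measure $\mu$: one has an orthogonal decomposition $L^2(\mu)=\bigoplus_{(k,l)\neq(0,0)}H_{k,l}$ with every $H_{k,l}$ invariant under $V_T$, and via the canonical identification of $H_{1,0}$ with $\overline{K_E^*(X^*)}\subset L^2(m)$ (on which $V_T$ acts as $M_{\bar\phi}$), the restriction $V_T|_{H_{k,l}}$ becomes unitarily equivalent to multiplication by the product character $\bar\phi(\omega_1)\cdots\bar\phi(\omega_k)\phi(\omega'_1)\cdots\phi(\omega'_l)$ on a symmetric subspace of $L^2(m^{\otimes(k+l)})$. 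On a simple tensor density, Fubini's theorem turns the spectral measure of a pair inside $H_{k,l}$ into a convolution of $k+l$ push-forwards of the type from the previous paragraph (or their conjugates, coming from the $\bar\phi$-factors), all of which lie in $\mathcal F_+^{-1}(\mathbf S)$: here it is useful to observe that any ideal of $\ell^\infty(\ZZ_+)$ is automatically closed under complex conjugation (write $\bar a=(\bar a/a)\cdot a$ with the convention $0/0=0$ and $|\bar a/a|\leq 1$), so $\mathcal F_+^{-1}(\mathbf S)$ is stable under measure conjugation. Since $\mathbf S$ is an ideal, $\mathcal F_+^{-1}(\mathbf S)$ is also closed under convolution (via $\widehat{\sigma\ast\tau}(n)=\widehat\sigma(n)\widehat\tau(n)$), so each such convolution stays in $\mathcal F_+^{-1}(\mathbf S)$. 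Density of simple tensors in $L^1(m^{\otimes(k+l)})$ together with norm-closedness of $\mathcal F_+^{-1}(\mathbf S)$ extends this to arbitrary pairs inside a single chaos; orthogonality of the chaoses and the continuity estimate $\|\sigma_{f,g}\|\leq C\|f\|_{L^2(\mu)}\|g\|_{L^2(\mu)}$ then conclude for all $f,g\in L^2_0(\mu)$.

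The main technical obstacle is this third step: pinning down cleanly the bi-graded chaos dictionary in the complex Gaussian setting, tracking the interplay between $\phi$ and $\bar\phi$ coming from the conjugate-linear identification of $H_{0,1}$, and verifying that the ideal structure of $\mathbf S$ really delivers all the closure properties (convolution and conjugation) needed to propagate $\mathbf S$-continuity through the tensor-product construction. Everything else reduces to routine Gaussian calculus once the fundamental identity is in hand.
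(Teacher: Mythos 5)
Your proposal is correct, and its overall architecture is the same as the paper's: take $\mu=\mu_{K_E}$, get $T$-invariance and full support from the intertwining relation $K_E^*T^*=M_{\bar\phi}K_E^*$ and the dense range of $K_E$, identify $\mathcal F_+(\sigma_{x^*,y^*})$ for linear functionals with the Fourier coefficients of a push-forward $(hm)\circ\phi^{-1}$ so that hypothesis (c) applies, and then propagate $\mathbf S$-continuity from linear functionals to all of $L^2_0(\mu)$ via a Wiener chaos decomposition. The one point where you genuinely diverge is this last step. You use the complex bi-graded chaos $\bigoplus_{k,l}H_{k,l}$ and second quantization, so that $V_T\vert_{H_{k,l}}$ becomes multiplication by a product character on a symmetric subspace of $L^2(m^{\otimes(k+l)})$ and the spectral measures become products (convolutions) of sequences each lying in $\mathbf S$ or its conjugate; this is exactly the Fock-space route of \cite{BG3} which the paper explicitly mentions in the proof of Lemma \ref{back1} (``copying out pp.\ 5105--5108'') but deliberately avoids. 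The paper instead first passes from complex-linear to \emph{real}-linear functionals (using the invariance of $\mu$ under $x\mapsto ix$ and the fact that an ideal of $\ell^\infty(\ZZ_+)$ is stable under real parts --- the same bounded-multiplier trick $\bar a=ua$ that you use for conjugation), and then invokes the real Hermite chaos $\mathcal H_k=\overline{\mathrm{span}}\{H_k(f)\}$ together with the identity $\langle H_k(u),H_k(v)\rangle_{L^2(\mu)}=\langle u,v\rangle_{L^2(\mu)}^k$, which gives $\mathcal F_+(\sigma_{H_k(f),H_k(g)})=\left[\mathcal F_+(\sigma_{f,g})\right]^k$ outright; the ideal and norm-closedness hypotheses then finish the job with no tensor-product bookkeeping at all. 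Your route has the merit of staying in the complex-linear picture and of making explicit where hypothesis (c) must be applied to $\bar h$ as well as $h$ (to handle the $\phi$- versus $\bar\phi$-factors), and your observation that every ideal of $\ell^\infty(\ZZ_+)$ is automatically conjugation-stable is correct and is precisely what legitimizes that step; the paper's route buys brevity, since the $k$-th chaos contribution is literally the $k$-th pointwise power of the first-chaos one. Be aware that the ``bi-graded chaos dictionary'' you defer is genuinely the bulk of the work on your route --- which is why the paper, and Lemma \ref{back2}, are organized around the other one.
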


As mentioned above, this proposition is essentially all what is needed to understand the proof of Theorem \ref{abstract}. It will follow at once from the next two lemmas, that will also be used in the proof of Proposition \ref{converse}.

\begin{lemma}\label{back1} Let $T\in\mathfrak L(X)$, and let $K:\mathcal H\to X$ be gamma-radonifying. Let also $\mathbf S$ be a norm-closed ideal of $\ell^\infty (\ZZ_+)$.
\begin{itemize}
\item[\rm (1)] The measure $\mu=\mu_{K}$ is $T$-invariant if and only if one can find an operator $M:\mathcal H\to\mathcal H$ such that $\mathcal H_{K}:=\mathcal H\ominus \ker(K)$ is $M^*$-invariant, 
$M^{*}$ is an isometry on $\mathcal H_K$ 
 and $TK=KM$.
\item[\rm (2)] The operator $T$ is $\mathbf S$-mixing with respect to $\mu$ if and only if $M^{*}$ is \emph{$\mathbf S$-mixing on $\mathcal H_{K}$}, 
\mbox{i.e.} the sequence $\left(\langle M^{*n}u,v\rangle_{\mathcal H})\right)_{n\geq 0}$ is in $\mathbf S$ for any $u,v\in \mathcal H_{K}$.
\end{itemize}
\end{lemma}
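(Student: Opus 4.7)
The plan is to translate both conditions into the Hilbert space $\mathcal{H}$ via the two identities recalled in the background section: the Gaussian Fourier formula $\widehat{\mu_K}(x^*)=\exp(-\|K^*x^*\|^2/4)$, which determines $\mu_K$ from the semi-norm $x^*\mapsto\|K^*x^*\|$, and the ``fundamental identity'' $\langle x^*,y^*\rangle_{L^2(\mu_K)}=\langle K^*y^*,K^*x^*\rangle_{\mathcal{H}}$. The key structural remark used throughout is that $\overline{K^*(X^*)}=\mathcal{H}_K$ (indeed, $(K^*X^*)^\perp=\ker K$ because $\langle h,K^*x^*\rangle_{\mathcal{H}}=\langle Kh,x^*\rangle_X$ for every $h\in\mathcal{H}$ and $x^*\in X^*$), so identities proved for elements of $X^*$ extend by norm-density to all of $\mathcal{H}_K$.

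\emph{Part (1).} The pushforward $\mu_K\circ T^{-1}$ is the Gaussian measure associated with the gamma-radonifying operator $TK$, so the Fourier formula shows that $T$-invariance of $\mu_K$ is equivalent to $\|K^*T^*x^*\|=\|K^*x^*\|$ for every $x^*\in X^*$. Given this identity I define $M^*$ on the dense subspace $K^*(X^*)\subset\mathcal{H}_K$ by $M^*(K^*x^*):=K^*T^*x^*$; well-definedness and isometricity are exactly the content of the identity, and $\mathbb{C}$-linearity is automatic because both $K^*$ and $K^*T^*$ are conjugate-linear in $x^*$. Extending $M^*$ by continuity to $\mathcal{H}_K$ and by zero on $\ker K$, then setting $M:=(M^*)^*$ (Hilbert-space adjoint), the intertwining $TK=KM$ falls out of $\langle TKf,x^*\rangle=\langle f,K^*T^*x^*\rangle=\langle f,M^*K^*x^*\rangle=\langle Mf,K^*x^*\rangle=\langle KMf,x^*\rangle$. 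Conversely, $TK=KM$ yields $K^*T^*=M^*K^*$ by adjunction, hence $\|K^*T^*x^*\|=\|M^*K^*x^*\|=\|K^*x^*\|$ since $M^*$ is isometric on $\mathcal{H}_K\ni K^*x^*$.

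\emph{Part (2), forward direction.} For $x^*,y^*\in X^*$ (which lie in $L^2_0(\mu)$ since $\mu$ is centred), the relation $V_T^n x^*=T^{*n}x^*$, the fundamental identity, and iteration of $K^*T^*=M^*K^*$ give $\widehat{\sigma_{x^*,y^*}}(n)=\langle K^*y^*,M^{*n}K^*x^*\rangle_{\mathcal{H}}$ for $n\geq 0$. Any ideal in $\ell^\infty(\ZZ_+)$ is automatically closed under complex conjugation (writing $\bar a_n=u_n a_n$ with $|u_n|\leq 1$), so $(\langle M^{*n}K^*x^*,K^*y^*\rangle)_n$ lies in $\mathbf{S}$; density of $K^*(X^*)$ in $\mathcal{H}_K$, norm-continuity of $(u,v)\mapsto(\langle M^{*n}u,v\rangle)_n$, and norm-closedness of $\mathbf{S}$ then extend this to arbitrary $u,v\in\mathcal{H}_K$.

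\emph{Part (2), converse direction --- the main step.} I invoke the Wiener-It\^o chaos decomposition $L^2(\mu)=\bigoplus_{k\geq 0}\mathcal{H}^{(k)}$, where $\mathcal{H}^{(1)}$ is the $L^2(\mu)$-closure of $X^*$, the map $x^*\mapsto K^*x^*$ extends to a conjugate-linear isometry $\mathcal{H}^{(1)}\to\mathcal{H}_K$, and $\mathcal{H}^{(k)}$ is the $k$-fold symmetric Hilbertian tensor power of $\mathcal{H}^{(1)}$. Under these identifications $V_T$ preserves each chaos and acts on $\mathcal{H}^{(k)}$ as the symmetric $k$-th tensor power of $M^*|_{\mathcal{H}_K}$; consequently the matrix coefficients of $V_T^n$ on decomposable symmetric tensors are finite sums of products of $k$ matrix coefficients of $M^{*n}$. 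Each such product lies in $\mathbf{S}$ by the ideal property (one factor in $\mathbf{S}$, the remaining ones in $\ell^\infty$ because $M^*$ is isometric), and density together with norm-closure of $\mathbf{S}$ gives $\mathbf{S}$-mixing on each $\mathcal{H}^{(k)}$. For general $f=\sum_{k\geq 1}f_k$, $g=\sum_{k\geq 1}g_k$ in $L^2_0(\mu)$, orthogonality of the chaos yields $\langle V_T^n f,g\rangle=\sum_k\langle V_T^n f_k,g_k\rangle$, and $\sum_k\|f_k\|\,\|g_k\|\leq\|f\|\,\|g\|$ makes the series converge absolutely in $\ell^\infty$, so the sum belongs to $\mathbf{S}$ by norm-closure. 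The delicate part is precisely this book-keeping between the multilinear Gaussian structure on $L^2(\mu)$ and the algebraic properties of $\mathbf{S}$; all else reduces to direct Hilbert-space computations.
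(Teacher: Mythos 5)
Your proof is correct, and part (1) is exactly the paper's argument: the pushforward is $\mu_{TK}$, Fourier transforms reduce invariance to $\Vert K^*T^*x^*\Vert=\Vert K^*x^*\Vert$, and $M^*$ is the resulting isometry of $\mathcal H_K=\overline{{\rm ran}(K^*)}$. For part (2) the forward direction and the reduction to linear functionals also match, but your converse takes the ``Fock space'' route (second quantization: $V_T$ acts on the $k$-th chaos as the symmetric tensor power of $M^*\vert_{\mathcal H_K}$), which is precisely the approach of \cite{BG3} that the paper mentions and then deliberately replaces by a version ``without any algebraic apparatus'': it first passes to real-linear functionals via $\langle u,v\rangle_{L^2(\mu)}=\tfrac12\,{\rm Re}\bigl(\langle u^*,v^*\rangle_{L^2(\mu)}\bigr)$, and then uses the Hermite identity $\langle H_k(u),H_k(v)\rangle_{L^2(\mu)}=\langle u,v\rangle_{L^2(\mu)}^k$ for unit vectors $u,v\in{\rm Re}(X^*)$, so that $\mathcal F_+(\sigma_{H_k(f),H_k(g)})=\left[\mathcal F_+(\sigma_{f,g})\right]^k$ lies in $\mathbf S$ by the ideal property, with no tensor calculus. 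The two arguments are the same in substance --- both need the ideal property to absorb products of matrix coefficients and the norm-closedness for the density and summation steps --- and your book-keeping (one factor in $\mathbf S$, the rest bounded since $M^*$ is an isometry; absolute convergence of $\sum_k\langle V_T^nf_k,g_k\rangle$ in $\ell^\infty$) is sound. One imprecision to fix if you keep the tensor-power formulation: for a rotation-invariant complex Gaussian measure the first chaos is not the $L^2(\mu)$-closure of $X^*$ alone but of $X^*+\overline{X^*}$ (equivalently, the complexification of the closed real span of ${\rm Re}(X^*)$), and the higher chaoses carry a holomorphic/antiholomorphic bigrading. This does not break the argument --- the extra matrix coefficients are complex conjugates of those of $M^{*n}$, and you have already noted that an ideal of $\ell^\infty(\ZZ_+)$ is conjugation-closed --- but it is exactly the issue that the paper's preliminary reduction to ${\rm Re}(X^*)$ is designed to dispose of, and it should be stated explicitly.
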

\begin{proof} Since $T$ is a linear operator, the image measure $\mu_{K}\circ T^{-1}$ is equal to $\mu_{TK}$. Taking the Fourier transforms, it follows that $\mu_K$ is $T$-invariant if and only if 
$$\Vert K^*(x^*)\Vert=\Vert K^*(T^*x^*)\Vert$$
for every $x^*\in X^*$. This means exactly that one can find an isometry $V:{\rm ran}(K^*)\to{\rm ran}(K^*)$ such that $K^*T^*=VK^*$. Since $\ov{{\rm ran}(K^*)}=\mathcal H\ominus \ker(K)$, this proves (1).

\smallskip
As for (2), the basic idea is very simple. Recall that $T$ is $\mathbf S$-mixing with respect to $\mu$ if and only if 
\begin{equation}\label{L_0} \mathcal F_+({\sigma_{f,g}})\in\mathbf S
\end{equation}
for any $f,g\in L^2_0(\mu)$. When $f=x^*$ and $g=y^*$ are continuous linear functionals on $X$ (recall that $\mu$ is centred, so that $X^*\subset L^2_0(\mu)$) we have $\widehat\sigma_{f,g}(n)=\langle f\circ T^n, g\rangle_{L^2(\mu)}=\langle K^*T^{*n}(x^*), K^*y^*\rangle_{\mathcal H}=\langle M^{*n}(K^*x^*), K^*y^*\rangle_{\mathcal H}$ for every $n\geq 0$. Since $M^*$
is power-bounded on $\mathcal H_K$ and $\mathcal H_K=\ov{{\rm ran}(K^*)}$, it follows that the $\mathbf S$-mixing property of $M^*$ on $\mathcal H_K$ is equivalent to the validity of (\ref{L_0}) for all continuous linear functionals $f,g$. So the point is to pass from linear functionals to arbitrary 
$f,g\in L^2_0(\mu)$. 

In the ``classical" cases (weak and strong mixing), this can be done in at least two ways: either by reproducing a rather elementary argument of R. Rudnicki (\cite[pp 108--109]{R}, see also \cite{GE} or \cite[Theorem 5.24]{BM}), or by the more abstract approach of \cite{BG3}, which relies on the theory of 
\emph{Fock spaces}. In the more general case we are considering, one possible proof would consist in merely copying out pp 5105--5108 of \cite{BG3}. The fact that $\mathbf S$ is norm-closed would be needed for the approximation argument on p. 5105, and the ideal property would be used on p. 5108 in the following way: if $(a_n)\in\mathbf S$ and if a sequence $(b_n)$ satisfies 
$\vert b_n\vert\leq C\, \vert a_n\vert$ for all $n\in\ZZ_+$ (and some finite constant $C$), then $(b_n)\in\mathbf S$. 

We outline a more direct approach, which is in fact exactly the same as in \cite{BG3} but without any algebraic apparatus. In what follows, we denote by ${\rm Re}(X^*)$ the set of all continuous, real-valued, real-linear functionals on $X$. For any $u\in{\rm Re}(X^*)$, we denote by $u^*$ the unique complex-linear functional with real part $u$, which is given by the formula 
$\langle u^*,x\rangle=u(x)-iu(ix)$.

First, we note that if (\ref{L_0}) holds for all continuous linear functionals, then it holds for all $f,g\in{\rm Re}(X^*)$. Indeed, using the invariance of $\mu$ under the transformation $x\mapsto ix$, it is easily checked that 
$\langle u,v\rangle_{L^2(\mu)}=\frac{1}{2}\,{\rm Re}\left(\langle u^*,v^*\rangle_{L^2(\mu)}\right)$ for any $u,v\in{\rm Re}(X^*)$. Applying this with $u:=f\circ T^n$, $n\geq0$ and $v:=g$ and since $\mathbf S$, being an ideal, is closed under taking real parts, it follows that if 
$f,g\in{\rm Re}(X^*)$ and $\mathcal F_+({\sigma_{f^*,g^*}})\in\mathbf S$ then $\mathcal F_+({\sigma_{f,g}})\in\mathbf S$.

Now, let us denote by $H_k$, $k\geq 0$ the classical real \emph{Hermite polynomials}, i.e. the orthogonal polynomials associated with the standard Gaussian distribution $\gamma=\frac{1}{\sqrt{2\pi}}\, e^{-t^2/2}dt$ on $\RR$. For every $k\geq 0$, set 
$$\mathcal H_k:=\overline{\rm span}^{L^2(\mu)}\{ H_k(f);\; f\in \mathcal S\}\, ,$$
where $\mathcal S$ is the set of all  $f\in{\rm Re}(X^*)$ such that $\Vert f\Vert_{L^2(\mu)}=1$. It is well known that $L^2(\mu)$ is the orthogonal direct sum of the subspaces $\mathcal H_k$, $k\geq 0$ (this is the so-called \emph{Wiener chaos decomposition}) and hence that 
$L^2_0(\mu)=\bigoplus_{k\geq 1} \mathcal H_k$. Moreover, it is also well known that 
$$\langle H_k(u),H_k(v)\rangle_{L^2(\mu)}=\langle u,v\rangle_{L^2(\mu)}^k$$
for any $u,v\in\mathcal S$ and every $k\geq 0$ (see \mbox{e.g.} \cite[Chapter 9]{D}, where the proofs are given in a Hilbert space setting but work equally well on a Fr\'echet space). Taking $u:=f\circ T^n$, $n\geq 0$ and $v:=g$, it follows that $\mathcal F_+(\sigma_{H_k(f),H_k(g)})=\left[ \mathcal F_+(\sigma_{f,g})\right]^k$ for any $f,g\in\mathcal S$ and all $k\geq 0$. Since $\mathbf S$ is a closed ideal in $\ell^\infty(\ZZ_+)$ and since the map $(f,g)\mapsto \mathcal F_+({\sigma_{f,g}})$ is continuous from $L^2(\mu)\times L^2(\mu)$ into $\ell^\infty(\ZZ_+)$ we conclude that 
(\ref{L_0}) does hold true for any $f,g\in L^2_0(\mu)$ as soon as it holds for linear functionals. 

\end{proof}

\begin{remark*} It is apparent from the above proof that the implication ``$T$ is $\mathbf S$-mixing implies $M^*$ is $\mathbf S$-mixing on $\mathcal H_K$" requires no assumption 
on the family $\mathbf S$. This will be used in the proof of Proposition \ref{converse}.
\end{remark*}

\smallskip
\begin{lemma}\label{back2} Let $M=M_{\phi}$ be a unitary multiplication operator on $\mathcal H=L^{2}(\Omega,m)$ associated with a measurable 
function $\phi:\Omega\to\TT$. Let also $\mathcal H_1\subset \mathcal H$ be a closed $M^*$-$\,$invariant subspace, and let us denote by $\mathcal H_1\cdot \mathcal H$ the set of all 
$f\in L^1(m)$ that can be written as $f=h_1h$, where $h_1\in \mathcal H_1$ and $h\in\mathcal H$. Finally, let $\mathbf S$ be an arbitrary subset of $\ell^\infty (\ZZ_+)$. Consider the following assertions:
\begin{itemize}
\item[\rm (i)] $M^{*}$ is $\mathbf S$-mixing on $\mathcal H_1$;
\item[\rm (ii)] for any $f\in\mathcal H_1\cdot\mathcal H$, the image measure $\sigma_f =(fm)\circ\phi^{-1}$ is $\mathbf S$-continuous;
\item[\rm (iii)] $\mathbf 1_{\{ \phi \in D\}}h\perp \mathcal H_1$, for any $\mathbf S$-small analytic set $D\subset \TT$ and every $h\in\mathcal H$.
\end{itemize}
Then {\rm (i)} and {\rm (ii)} are equivalent and imply {\rm (iii)}.
\end{lemma}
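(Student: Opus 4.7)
The plan hinges on a single Fourier identity: for any $h_1, h \in \mathcal H$, the $L^1$-function $f := h_1 h$ satisfies
$$\widehat{\sigma_f}(n) \;=\; \int_\Omega \bar\phi^n\, h_1\, h\, dm \;=\; \langle M^{*n} h_1, \bar h\rangle_{\mathcal H} \qquad (n \geq 0),$$
obtained by a change of variables under $\phi$ together with $M^* = M_{\bar\phi}$. Since $h \mapsto \bar h$ is an antilinear bijection of $\mathcal H$ onto itself, this identity is precisely what pairs condition (i), phrased in terms of $(\langle M^{*n} u, v \rangle)_{n \geq 0}$ with $u,v\in \mathcal H_1$, with condition (ii), phrased in terms of $\mathcal F_+(\sigma_f)$ for $f \in \mathcal H_1 \cdot \mathcal H$.

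For (i)$\Rightarrow$(ii), take $f = h_1 h$ with $h_1 \in \mathcal H_1$ and $h \in \mathcal H$, and decompose $\bar h = P_{\mathcal H_1} \bar h + v^\perp$, with $v^\perp \in \mathcal H_1^\perp$. Since $\mathcal H_1$ is $M^*$-invariant, $M^{*n} h_1 \in \mathcal H_1$ for all $n \geq 0$, whence $\langle M^{*n} h_1, v^\perp\rangle = 0$; the identity then collapses $\widehat{\sigma_f}(n)$ to $\langle M^{*n} h_1, P_{\mathcal H_1}\bar h\rangle$, which is a sequence in $\mathbf S$ by (i). Conversely, given $u, v \in \mathcal H_1$, set $h_1 := u$ and $h := \bar v$; then $f := u\bar v$ lies in $\mathcal H_1 \cdot \mathcal H$ (since $\bar v \in \mathcal H$), and the identity gives $(\widehat{\sigma_f}(n))_{n\geq 0} = (\langle M^{*n} u, v\rangle)_{n\geq 0}$, which belongs to $\mathbf S$ by (ii).

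For (i)$\Rightarrow$(iii), the crucial idea is a positivity trick: applying (ii) to $f := h_1 \overline{h_1} = |h_1|^2 \in \mathcal H_1 \cdot \mathcal H$ (the second factor being $\overline{h_1} \in \mathcal H$) produces, for every $h_1 \in \mathcal H_1$, a \emph{positive} $\mathbf S$-continuous measure $\sigma_{|h_1|^2}$. By the definition of $\mathbf S$-smallness applied to the analytic set $D$, there is a Borel set $\widetilde D \supset D$ with $\sigma_{|h_1|^2}(\widetilde D) = 0$, i.e.\ $\int_{\phi^{-1}(\widetilde D)} |h_1|^2\, dm = 0$; hence $h_1 = 0$ $m$-a.e.\ on $\phi^{-1}(\widetilde D) \supset \phi^{-1}(D)$. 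Consequently, for any $h \in \mathcal H$, the integrand $\mathbf 1_D(\phi)\, h\, \overline{h_1}$ vanishes $m$-a.e., so $\langle \mathbf 1_{\{\phi\in D\}} h, h_1\rangle_{\mathcal H} = 0$; since $h_1 \in \mathcal H_1$ is arbitrary, $\mathbf 1_{\{\phi\in D\}} h \perp \mathcal H_1$, which is (iii).

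The main---and only---subtle point is that (ii) produces in general \emph{complex} measures $\sigma_f$, while $\mathbf S$-smallness is defined only against positive measures. The choice $f = |h_1|^2$ sidesteps this issue cleanly and, nicely, requires no structural hypothesis on $\mathbf S$. The analytic (as opposed to Borel) character of $D$ in (iii) causes no additional difficulty, since we integrate only over the Borel superset $\phi^{-1}(\widetilde D)$ of $\phi^{-1}(D)$.
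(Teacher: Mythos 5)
Your proof is correct and follows essentially the same route as the paper: the identity $\widehat{\sigma_{h_1h}}(n)=\langle M^{*n}h_1,\bar h\rangle_{\mathcal H}$ together with the orthogonal projection onto the $M^{*}$-invariant subspace $\mathcal H_1$ gives (i)$\Leftrightarrow$(ii), and (iii) is obtained by evaluating the measures $\sigma_f$ on $D$. Your positivity trick with $f=|h_1|^2$ simply makes explicit the step the paper dismisses as ``clear'', and it is exactly the right way to handle the fact that $\mathbf S$-smallness is tested only against \emph{positive} measures.
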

\begin{proof} We note that (iii) makes sense because $\phi^{-1}(D)$ is $m$-measurable for any analytic $D\subset\TT$ (see \cite{K}).

A straightworward computation shows that for any $u,v\in L^2(\Omega, m)$, the Fourier coefficients of $\sigma_{u\bar v}$ are given by
$$\widehat\sigma_{u\bar v} (n)=\langle M^{*n} u,v\rangle_{\mathcal H}\, .$$
Moreover, since $\mathcal H_1$ is $M^*$-invariant we have 
$$
\langle M^{*n} h_1,h\rangle_{\mathcal H}=\langle M^{*n} h_1,\pi_{1}h\rangle_{\mathcal H}\hskip 1cm (n\geq 0)
$$
for any $h_1\in\mathcal H_1$ and every $h\in \mathcal H$, where $\pi_1$ 
is the orthogonal projection onto $\mathcal H_1$. It follows that $M^*$ is $\mathbf S$-mixing on $\mathcal H_1$ if and only if 
$\mathcal F_+({\sigma_f})\in\mathbf S$
for any $f=h_1\bar h\in\mathcal H_1\cdot\mathcal H$. In other words, (i) and (ii) are equivalent.

For any analytic set $D\subset\TT$ and $h,h_1\in\mathcal H$, we have 
$$\langle h_1,\mathbf 1_{\{\phi\in D\}} h\rangle_{\mathcal H}=\sigma_{h_1\bar h} (D)\, .$$
Hence, (iii) says exactly that $\sigma_f(D)=0$ for any $f\in\mathcal H_1\cdot\mathcal H$ and every $\mathbf S$-small analytic set $D\subset \TT$. From this, it is clear that (ii) implies (iii). 
\end{proof}

\begin{remark*} Properties (i), (ii), (iii) are in fact equivalent in the strong mixing case, i.e. $\mathbf S=c_0(\ZZ_+)$. Indeed, by a famous theorem of R. Lyons (\cite{L}, see also \cite{KL}), a positive measure $\sigma$ is Rajchman \emph{if and only if} $\sigma (D)=0$ for every Borel set of extended uniqueness $D\subset\TT$. From this, it follows that if (iii) holds then $\sigma_f$ is Rajchman for every nonnegative $f\in\mathcal H\cdot \mathcal H_1$, and it is easy to check that this implies (ii).\end{remark*}

\smallskip
\begin{proof}[Proof of Proposition \ref{background}] Let $M_\phi :L^2(\Omega ,m)\to L^2(\Omega, m)$ be the unitary multiplication operator associated with $\phi$. Since $TM_\phi =M_\phi K_E\, ,$
the Gaussian measure $\mu=\mu_{K_E}$ is $T$-invariant by Lemma \ref{back1}. Moreover, $\mu$ has full support since $K_E$ has dense range (by (b)), and $T$ is $\mathbf S$-mixing with respect to $\mu$ by 
Lemmas \ref{back1} and \ref{back2}.

\end{proof}

\begin{rem1} An examination of the proof reveals that assumption (c) in Proposition \ref{background} is a little bit stronger than what is actually needed: since $K_E^*:X^*\to L^2(\Omega,m)$ is given by $K_E^*x^*=\overline{\langle x^*, E(\,\cdot\,)\rangle}$, it is enough to assume that the measure $(fm)\circ\phi^{-1}$ is $\mathbf S$-continuous 
for every $f\in L^1(\Omega,m)$ of the form $f=\langle x^*, E(\,\cdot\,)\rangle\,\overline{\langle y^*, E(\,\cdot\, )\rangle}$, where $x^*,y^*\in X^*$. However, the proposition is easier to remember as stated.
\end{rem1}

\begin{rem2} Somewhat ironically, it will follow from Theorem \ref{abstract} that the seemingly crucial gamma-radonifying assumption (a) in Proposition \ref{background} is in fact not necessary to ensure $\mathbf S$-mixing in the Gaussian sense. 
Indeed, it is easily checked that if 
one can find a $\TT$-eigenfield for $T$ satisfying (b) and (c), then the $\TT$-eigenvectors of $T$ are $\mathbf S$-perfectly spanning. In fact, the proof of Theorem \ref{abstract} essentially consists in showing that if there exists 
a $\TT$-eigenfield satisfying (b) and (c), then it is possible to construct one satisfying (b), (c) \emph{and} (a). 
\end{rem2}

\subsection{An ``exhaustion" lemma}\label{Sophiesection} Besides Proposition \ref{background}, the following lemma will also be needed in the proof of Theorems \ref{abstract} and \ref{abstracteasy}.
\begin{lemma}\label{exhaust} Let $\mathcal B\subset\mathcal M(\TT)$, and let $T\in\mathcal L(X)$. Assume that the $\TT$-eigenvectors of $T$ are $\mathcal B$-perfectly spanning for analytic sets. Finally, put 
$$\mathbf V:=\{ (x,\lambda)\in X\times \TT;\; x\neq 0\;{\rm and}\; T(x)=\lambda x\}\, .$$
Then, one can find a closed subset $\mathbf Z$ of $\mathbf V$ with the following properties:
\begin{itemize}
\item for any (relatively) open set $O\subset\mathbf V$ such that $O\cap\mathbf Z\neq\emptyset$, the set $\pi_2(O\cap\mathbf Z):=\{ \lambda\in\TT;\;\exists x\in O\cap \mathbf Z\;:\; (x,\lambda)\in \mathbf Z\}$ is not $\mathcal B$-small;
\item the linear span of $\pi_1(\mathbf Z):=\{ x\in X;\; \exists\lambda\in\TT\;:\; (x,\lambda)\in\mathbf Z\}$ is dense in $X$.
\end{itemize}
\end{lemma}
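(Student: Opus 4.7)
\textbf{Proof plan for Lemma \ref{exhaust}.} The natural approach is a Cantor--Bendixson-type construction: remove from $\mathbf V$ the ``$\mathcal B$-small part'' in a single shot, taking advantage of the fact that $\mathbf V$ is second countable (as a subset of $X\times\TT$) and that the class of $\mathcal B$-small sets is a $\sigma$-ideal (from the very definition, a countable union of sets each coverable by a $\sigma$-null Borel set for every positive $\sigma\in\mathcal B$ is again of that form). Concretely, call a relatively open $O\subset\mathbf V$ \emph{small} if $\pi_2(O)$ is $\mathcal B$-small (as an analytic subset of $\TT$), let $U$ be the union of all small open sets, and set $\mathbf Z:=\mathbf V\setminus U$. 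Then $\mathbf Z$ is relatively closed in $\mathbf V$. By second countability, $U=\bigcup_n O_n$ for countably many small $O_n$, so $\pi_2(U)=\bigcup_n\pi_2(O_n)$ is an analytic $\mathcal B$-small set.

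For the first bullet, suppose $O\subset\mathbf V$ is relatively open with $O\cap\mathbf Z\neq\emptyset$. I claim
\[
\pi_2(O)\subset\pi_2(O\cap U)\,\cup\,\pi_2(O\cap\mathbf Z)\subset\pi_2(U)\,\cup\,\pi_2(O\cap\mathbf Z).
\]
Indeed, if $\lambda\in\pi_2(O)\setminus\pi_2(O\cap U)$, pick any $x$ with $(x,\lambda)\in O$; since $\lambda\notin\pi_2(O\cap U)$ forces $(x,\lambda)\notin U$, we get $(x,\lambda)\in O\cap\mathbf Z$ and hence $\lambda\in\pi_2(O\cap\mathbf Z)$. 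Now if $\pi_2(O\cap\mathbf Z)$ were $\mathcal B$-small, the $\sigma$-ideal property would force $\pi_2(O)$ to be $\mathcal B$-small, so $O$ itself would be small, giving $O\subset U$ and contradicting $O\cap\mathbf Z\neq\emptyset$.

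For the second bullet, we invoke the assumption of $\mathcal B$-perfect spanning for analytic sets applied to the analytic $\mathcal B$-small set $D:=\pi_2(U)$: the span of $\bigcup_{\lambda\in\TT\setminus\pi_2(U)}\ker(T-\lambda)$ is dense in $X$. But for any $\lambda\notin\pi_2(U)$ and any nonzero $x\in\ker(T-\lambda)$, the pair $(x,\lambda)$ lies in $\mathbf V$, and $\lambda\notin\pi_2(U)$ forces $(x,\lambda)\notin U$, i.e. $(x,\lambda)\in\mathbf Z$; thus $x\in\pi_1(\mathbf Z)$. Hence $\mathrm{span}\,\pi_1(\mathbf Z)$ is dense. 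The only points requiring care are (a) that ``$\mathcal B$-small'' is preserved by countable unions (handled by the definition), and (b) that $\pi_2(U)$ remains an \emph{analytic} set, which is why the hypothesis is stated for analytic—not merely Borel—$\mathcal B$-small sets; these are exactly the issues anticipated by Remark~1 after Corollary \ref{Smix=span}.
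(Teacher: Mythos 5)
Your proof is correct and follows essentially the same route as the paper's: both define $\mathbf Z$ as the complement in $\mathbf V$ of the union of all relatively open sets with $\mathcal B$-small second projection, use the Lindelöf property plus the $\sigma$-ideal property of $\mathcal B$-small sets to see that $D=\pi_2(U)$ is an analytic $\mathcal B$-small set, and then apply the spanning hypothesis to $D$. Your explicit verification of the first bullet (via $\pi_2(O)\subset\pi_2(U)\cup\pi_2(O\cap\mathbf Z)$) is a slightly more careful rendering of what the paper dismisses as holding ``by its very definition,'' but the argument is the same.
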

\begin{proof} Let us denote by $\mathbf O$ the union of all relatively open sets $O\subset \mathbf V$ such that $\pi_2(O)$ is $\mathcal B$-small, and set $\mathbf Z:=\mathbf V\setminus \mathbf O$. Then $\mathbf Z$ is closed in $\mathbf V$ and satisfies the first 
required property (by its very definition). 
Moreover, by the Lindel\"of property and since any countable union of $\mathcal B$-small sets is 
$\mathcal B$-small, the set $D:=\pi_2(\mathbf O)$ is $\mathcal B$-small; and $D$ is an analytic set because $\mathbf O$ is Borel in $X\times \TT$. By assumption on $T$, the linear span of $\bigcup_{\lambda\in\TT\setminus D}\ker (T-\lambda)$ 
is dense in $X$. Now, any $\TT$-eigenvector $x$ for $T$ is in $\pi_1(\mathbf V)$, and if the associated eigenvalue $\lambda$ is not in $D$ then $(x,\lambda)\in\mathbf Z$ by the definition of $D$. It follows that $\ker (T-\lambda)\setminus\{ 0\}$ is contained 
in $\pi_1(\mathbf Z)$ for any $\lambda\in\TT\setminus D$, and hence that $\overline{\rm span}\,( \pi_1(\mathbf Z))=X$.
\end{proof}

\smallskip
Despite its simplicity, Lemma \ref{exhaust} will be quite useful for us. We illustrate it by proving the following result.

\begin{proposition}\label{perfect} Let $\mathcal B\subset \mathcal M(\TT)$ be hereditary with respect to absolute continuity, and let $T\in\mathfrak L(X)$. The following are equivalent:
\begin{enumerate}[\rm (i)]
\item the $\TT$-eigenvectors of $T$ are $\mathcal B$-perfectly spanning for analytic sets;
\item one can find a countable family of {continuous} $\TT$-eigenvector fields $(E_i)_{i\in I}$ for $T$, where $E_i:\Lambda_i\to X$ is defined on some $\mathcal B$-perfect set $\Lambda_i\subset\TT$, such that 
${\rm span}\left(\bigcup_{i\in I} E_i(\Lambda_i)\right)$ is dense in $X$. 
\end{enumerate}
\end{proposition}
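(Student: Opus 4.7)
The plan is to treat the two implications separately. The direction (ii)$\Rightarrow$(i) unwraps the definitions, while (i)$\Rightarrow$(ii) proceeds via Lemma \ref{exhaust} followed by a Jankov--von Neumann uniformization and an application of Lusin's theorem.

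For (ii)$\Rightarrow$(i), given an analytic $\mathcal B$-small set $D\subseteq\TT$, I would check that $\Lambda_i\setminus D$ is dense in $\Lambda_i$ for every $i$. Indeed, if some open $V$ satisfied $V\cap\Lambda_i\neq\emptyset$ and $V\cap\Lambda_i\subseteq D$, then the Borel set $V\cap\Lambda_i$ would be non-$\mathcal B$-small by $\mathcal B$-perfectness of $\Lambda_i$, so some positive $\sigma\in\mathcal B$ would give $\sigma(V\cap\Lambda_i)>0$; but since $D$ is $\mathcal B$-small, $D$ lies in a Borel $\sigma$-null set, forcing $\sigma(V\cap\Lambda_i)=0$, a contradiction. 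Continuity of $E_i$ then yields $E_i(\Lambda_i)\subseteq\overline{E_i(\Lambda_i\setminus D)}\subseteq\overline{\bigcup_{\lambda\in\TT\setminus D}\ker(T-\lambda)}$, and the denseness of ${\rm span}\,\bigcup_i E_i(\Lambda_i)$ gives (i).

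For (i)$\Rightarrow$(ii), I would start by applying Lemma \ref{exhaust} to produce the closed set $\mathbf Z\subseteq\mathbf V$ with its two properties. Since $X$ is separable, $\pi_1(\mathbf Z)$ admits a countable dense sequence $(y_n)$; lift each to $z_n=(y_n,\mu_n)\in\mathbf Z$. For each $(n,k)\in\NN\times\NN$, take an open neighborhood $W_{n,k}$ of $z_n$ in $X\times\TT$ of diameter at most $1/k$, and set $U_{n,k}:=W_{n,k}\cap\mathbf Z$. Because $W_{n,k}\cap\mathbf V$ is relatively open in $\mathbf V$ and meets $\mathbf Z$, Lemma \ref{exhaust} tells us that the analytic set $\pi_2(U_{n,k})$ is not $\mathcal B$-small; universal measurability of analytic sets then furnishes a positive $\sigma\in\mathcal B$ with $\sigma(\pi_2(U_{n,k}))>0$. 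Applying Jankov--von Neumann uniformization to the Borel set $U_{n,k}\subseteq X\times\TT$ produces a universally measurable map $s:\pi_2(U_{n,k})\to X$ with $(s(\lambda),\lambda)\in U_{n,k}$ for every $\lambda$, and Lusin's theorem yields a compact set $K\subseteq\pi_2(U_{n,k})$ of positive $\sigma$-measure on which $s$ is continuous. The restriction $\tau:=\sigma|_K$ belongs to $\mathcal B$ by hereditariness; set $\Lambda_{n,k}:=\supp(\tau)$ and $E_{n,k}:=s|_{\Lambda_{n,k}}$. Then $\Lambda_{n,k}$ is $\mathcal B$-perfect as the support of a positive measure in $\mathcal B$, $E_{n,k}$ is a continuous $\TT$-eigenvector field for $T$, and every value $E_{n,k}(\lambda)$ lies within $1/k$ of $y_n$ by the diameter bound on $W_{n,k}$.

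To close the argument, note that $\Lambda_{n,k}\neq\emptyset$ forces $y_n\in\overline{\bigcup_k E_{n,k}(\Lambda_{n,k})}$ for every $n$; since $(y_n)$ is dense in $\pi_1(\mathbf Z)$ and $\overline{\rm span}\,\pi_1(\mathbf Z)=X$ by Lemma \ref{exhaust}, we obtain $\overline{\rm span}\,\bigcup_{n,k}E_{n,k}(\Lambda_{n,k})=X$. The main technical point is the delicate interplay in the construction between the universally measurable section, the Lusin compact set, and the requirement that the final support be $\mathcal B$-perfect; hereditariness of $\mathcal B$ is the crucial ingredient that keeps $\sigma|_K$ inside $\mathcal B$, so that its support is automatically $\mathcal B$-perfect without any norm-closedness assumption on $\mathcal B$.
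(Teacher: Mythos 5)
Your argument is correct and follows essentially the same route as the paper: the easy direction is the same unwinding of definitions, and for (i)$\Rightarrow$(ii) you use Lemma \ref{exhaust}, a countable dense set with shrinking neighbourhoods, a positive measure of $\mathcal B$ concentrated on the (analytic, non-small) projection, Jankov--von Neumann uniformization, Lusin's theorem, and hereditariness to restrict the measure to the compact set of continuity --- exactly the paper's proof, up to the cosmetic $(n,k)$ double indexing and the order in which the measure and the measurable section are produced.
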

\begin{proof} Since the $E_i$ are assumed to be continuous in (ii), it is plain that (ii) implies (i). Conversely, assume that (i) holds true, and let $\mathbf Z$ be the set given by Lemma \ref{exhaust}.

Choose a countable dense set $\{ (x_i,\lambda_i);\; i\in\NN\}\subset\mathbf Z$, and let $(\veps_i)_{i\in\NN}$ be any sequence of positive numbers tending to $0$. Let also $d$ be a compatible metric $d$ on $X$ and put $B_i:=\{ x\in X;\; d(x,x_i)<\veps_i\}$. By the definition of $\mathbf Z$ and since $\mathcal B$ is hereditary with respect to absolute continuity, one can find 
for each $i\in\NN$, a probability measure $\sigma_i\in\mathcal B$ such that 
$${\rm supp}(\sigma_i)\subset\{\lambda\in\TT;\; \exists x\in B_i\; :\; T(x)=\lambda x\}\, .$$

Put $\Lambda_i:={\rm supp}(\sigma_i)$, so that $\Lambda_i$ is $\mathcal B$-perfect. The set
$$A_i=\{ (x,\lambda)\in  B_i\times\Lambda_i;\; T(x)=\lambda x\}$$ 
is closed in the Polish space $B_i\times\Lambda_i$ and projects onto $\Lambda_i$. By the Jankov--von Neumann uniformization theorem (see \cite[18.A]{K}), one can find a universally measurable map 
$E_i:\Lambda_i\to  B_i$ such that $(E_i(\lambda), \lambda)\in A_i$ for every $\lambda\in \Lambda_i$. In other words we have a universally measurable $\TT$-eigenvector field $E_i:\Lambda_i\to X$ such that  
$d(E_i(\lambda),x_i)< \veps_i$ for every $\lambda\in\Lambda_i$.  By Lusin's theorem on measurable functions (see \mbox{e.g.} \cite[17.D]{K}), one can find a closed set $\Lambda'_i$ of positive $\sigma_i$-measure such that $(E_i)_{\vert \Lambda'_i}$ is continuous. So, upon replacing the measure $\sigma_i$ with its restriction $\sigma'_i$ to 
$\Lambda'_i$ (which is still in $\mathcal B$ since it is absolutely continuous with respect to $\sigma_i$) and $\Lambda_i$ with ${\rm supp}(\sigma_i')$, we may assume that in fact each $E_i$ is \emph{continuous}. Since  ${\rm span}\left(\bigcup_{i\in \NN} E_i(\Lambda_i)\right)$ is dense in $X$, the proof is complete.
\end{proof}

\begin{remark*} When $\mathcal B$ is the family of continuous measures, the equivalence of (i) and (ii) is proved in \cite[Proposition 4.2]{Sophie}.
\end{remark*}

\section{The weak mixing case}\label{WMcase} In this section, we concentrate on part (1) of Theorem \ref{WS}. So we assume that the $\TT$-eigenvectors of our operator $T\in\mathfrak L(X)$ are perfectly spanning, and we want to show that 
$T$ is weakly mixing in the Gaussian sense. For simplicity, we assume that $X$ is a \emph{Banach} space in order to avoid working with sequences of continuous semi-norms.

\subsection{Why things may not be obvious} Before embarking on the proof, let us briefly explain why it could go wrong. In view of Proposition \ref{background}, we need to construct a $\TT$-eigenfield $(E,\phi)$ for $T$ on some 
measure space $(\Omega ,m)$ such that

\smallskip
\begin{itemize}
\item[\rm (a)] the operator $K_E : L^2(\Omega, m)\to X$ is gamma-radonifying;
\item[\rm (b)] the vector field $E$ is $m$-spanning;
\item[\rm (c)] for any $f\in L^1(\Omega, m)$, the image measure $\sigma_f=(f\, m)\circ\phi^{-1}$ is continuous.
\end{itemize}

\smallskip
To achieve (a) and (b), the most brutal way is to  choose some total sequence $(x_n)_{n\in\NN}\subset B_X$ consisting of $\TT$-eigenvectors for $T$, say $T(x_n)=\lambda_n x_n$, to take $\Omega=\NN$ with the counting measure $m$, 
and to define $(E,\phi)$ by $E(n)=2^{-n}x_n$ and $\phi (n)=\lambda_n$. Then the vector field $E$ is obviously $m$-spanning, and the operator $K_E$ is gamma-radonifying since $K_E(e_n)=2^{-n}x_n$ and hence $\sum_0^\infty\Vert K_E(e_n)\Vert<\infty$.  
(Here, of course, $(e_n)$ is the canonical basis of $L^2(\Omega,m)=\ell^2(\NN)$). 
However, the image measure $\sigma =m\circ\phi^{-1}$ is purely atomic, so (c) is certainly not satisfied. (In fact, by \cite[Theorem 5.1]{Sophie}, $T$ will never be ergodic with respect to any measure induced by a random variable of the form 
$\xi=\sum_0^\infty \chi_n x_n$, where $(x_n)$ is a sequence of $\TT$-eigenvectors for $T$ and $(\chi_n)$ is a sequence of independent, rotation-invariant centred random variables). To get (c) we must take a more complicated measure space $(\Omega, m)$ and avoid atoms; but then it will be harder to show that 
an operator defined on $L^2(\Omega, m)$ is gamma-radonifying. Thus, conditions (a) and (c) go in somewhat opposite directions, and we have to find a kind of balance between them. 

In \cite{BG2} and \cite{BM}, the measure space $(\Omega,m)$  was an open arc of $\mathbb T$ (with the Lebesgue measure), and the difficulty was partially settled by requiring some regularity on the $\TT$-eigenvector fields and combining this with the geometrical properties of the underlying Banach space. In the present paper, we will allow more freedom on the measure space, and this will enable us to get rid of any assumption on $X$.

\smallskip
The main part of the proof is divided into two steps. We shall first explain how to produce gamma-radonifying operators of the form $K_E$ in a reasonably flexible way, and then we show 
how this can be used to construct suitable $\TT$-eigenfields for $T$ under the perfectly spanning assumption. Once this has been done, the conclusion follows easily.

\subsection{How to construct gamma-radonifying operators}\label{bordel01}
The first part of our program relies essentially on the following observation. Let $G$ be a compact metrizable abelian group with normalized Haar measure $m_G$ and dual group $\Gamma$, and let $E:G\to X$ be a vector field on $(G, m_G)$. 
Then the operator $K_E:L^2(G,m_G)\to X$ is gamma-radonifying as soon as 
$$\sum_{\gamma\in \Gamma} \Vert\widehat E(\gamma )\Vert <\infty\, ,$$
where $\widehat E$ is the Fourier transform of $E$. This is indeed obvious since the characters of $G$ form an orthonormal basis $(e_\gamma)_{\gamma\in \Gamma}$ of $L^2(G,m_G)$ and $K_E(e_\gamma)=\widehat E(\gamma )$ 
for every $\gamma\in\Gamma$.

\smallskip
The compact group we shall use is the usual \emph{Cantor group} $G:=\{ 0,1\}^\NN$, where addition is performed modulo 2. The dual group $\Gamma$ is identified with the set all of finite subsets of $\NN$, which we denote by $\rm FIN$. 
A set $I\in {\rm FIN}$ corresponds to the character $\gamma_I\in\Gamma$ defined by the formula 
$$\gamma_I(\omega)=\prod_{i\in I} \veps_i(\omega)\, , $$
where $\veps_i(\omega_0, \omega_1, \dots )=(-1)^{\omega_i}$. An empty product is declared to be equal to $1$, so that $\gamma_\emptyset=\mathbf 1$.

\smallskip
It is common knowledge that any ``sufficiently regular" function has an absolutely convergent Fourier series. In our setting, regularity will be quantified as follows. 
Let us denote by $d$ the usual ultrametric distance on $G$, 
$$d(\omega, \omega')=2^{-n(\omega,\omega')}\, ,$$
where $n(\omega,\omega')$ is the first $i$ such that $\omega_i\neq \omega'_i$. We shall say that a map $E:G\to X$ is \emph{super-Lipschitz} if it is $\alpha$-H\"olderian for some $\alpha >1$, \mbox{i.e.}
$$\Vert E(\omega)-E(\omega')\Vert \leq C\, d(\omega,\omega')^\alpha$$
for any $\omega, \omega'\in G$ and some finite constant $C$. (Of course, there are no nonconstant super-Lipschitz maps on $\TT$; but life is different on the Cantor group). 

\smallskip
The following lemma is the kind of result 
that we need.

\begin{lemma}\label{halfkey0} If $E: G\to X$ is super-Lipschitz, then $E$ has an absolutely convergent Fourier series.
\end{lemma}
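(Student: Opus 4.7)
The plan is to estimate $\|\widehat E(\gamma_I)\|$ for each nonempty $I\in{\rm FIN}$ by a standard ``difference trick'' exploiting the translation invariance of Haar measure, then sum over $I$ grouped by the value of $\max(I)$.

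First, for a fixed nonempty $I\in{\rm FIN}$, let $n:=\max(I)$ and let $e_n\in G$ denote the element with a $1$ in the $n$-th coordinate and $0$ elsewhere. Since $n\in I$ and the characters $\veps_i$ are multiplicative, we have $\gamma_I(\omega+e_n)=\gamma_I(\omega)\,\gamma_I(e_n)=-\gamma_I(\omega)$. By translation invariance of $m_G$,
\begin{equation*}
\widehat E(\gamma_I)=\int_G E(\omega)\,\gamma_I(\omega)\,dm_G(\omega)=\int_G E(\omega+e_n)\,\gamma_I(\omega+e_n)\,dm_G(\omega)=-\int_G E(\omega+e_n)\,\gamma_I(\omega)\,dm_G(\omega),
\end{equation*}
so averaging these two expressions gives
\begin{equation*}
\widehat E(\gamma_I)=\frac12\int_G \bigl(E(\omega)-E(\omega+e_n)\bigr)\,\gamma_I(\omega)\,dm_G(\omega).
\end{equation*}
Since $\omega$ and $\omega+e_n$ first differ in coordinate $n$, we have $d(\omega,\omega+e_n)=2^{-n}$, and the super-Lipschitz assumption yields
\begin{equation*}
\|\widehat E(\gamma_I)\|\le \frac C2\,2^{-n\alpha}.
\end{equation*}

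Next, I count the number of $I\in{\rm FIN}$ with $\max(I)=n$: such an $I$ must contain $n$ and an arbitrary subset of $\{0,1,\dots,n-1\}$, so there are exactly $2^n$ of them. Grouping by $n$ and adding the constant term $\|\widehat E(\gamma_\emptyset)\|=\|\int_G E\,dm_G\|\le \|E\|_\infty<\infty$, I obtain
\begin{equation*}
\sum_{I\in{\rm FIN}}\|\widehat E(\gamma_I)\|\le \Bigl\|\int_G E\,dm_G\Bigr\|+\sum_{n\ge 0} 2^n\cdot\frac C2\,2^{-n\alpha}=\Bigl\|\int_G E\,dm_G\Bigr\|+\frac C2\sum_{n\ge 0} 2^{n(1-\alpha)},
\end{equation*}
and the geometric series converges because $\alpha>1$. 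There is no serious obstacle here; the only thing to watch is that $\alpha>1$ is precisely what is needed to beat the exponential growth $2^n$ of the number of characters of a given ``level'', which explains the terminology \emph{super}-Lipschitz and why ordinary Lipschitz would be insufficient on the Cantor group.
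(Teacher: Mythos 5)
Your proof is correct and follows essentially the same route as the paper's: an averaging/difference trick in the coordinate $n=\max(I)$ using the sign change $\gamma_I(\omega+e_n)=-\gamma_I(\omega)$, yielding $\|\widehat E(\gamma_I)\|\le (C/2)2^{-\alpha n}$, followed by the count of $2^n$ characters at each level and the geometric series with ratio $2^{1-\alpha}<1$. The paper phrases the same computation by splitting the integral over $\{\veps_n=1\}$ and changing variables there, which gives the identical bound.
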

\begin{proof} Assume that $E$ is $\alpha$-H\"olderian, $\alpha >1$, with constant $C$. The key point is the following 
\begin{fact*} Let $n\in\NN$. If $I\in{\rm FIN}$ satisfies $I\ni n$ then $\Vert\widehat E(\gamma_I)\Vert\leq (C/2)\times 2^{-\alpha n}\, .$
\end{fact*}

Indeed, setting $\mathcal F_n:=\{ I\in{\rm FIN};\; I\neq\emptyset\;{\rm and}\;\max I=n\}$ (which has cardinality $2^n$), this yields that $$\sum_{I\in\mathcal F_n} \Vert \widehat E(\gamma_I)\Vert\leq 2^n\times (C/2)\times 2^{-\alpha n}=(C/2)\times 2^{-\beta n}$$ for all $n\geq 0$ (where $\beta=\alpha-1>0$), and the result follows.

\begin{proof}[Proof of Fact] Write $I=J\cup\{ n\}$, where $J\in {\rm FIN}$ and $n\not\in J$. Then $$\langle \gamma_I,\omega\rangle = \veps_n(\omega) \langle \gamma_J,\omega\rangle\, ,$$ and hence 
\begin{eqnarray*}
\widehat E(\gamma_I)&=&\int_{\{\omega;\; \veps_n(\omega)=1\}}\langle \gamma_J,\omega\rangle\, E(\omega)\, dm_G(\omega) - \int_{\{\omega;\; \veps_n(\omega)=-1\}}\langle \gamma_J,\omega\rangle\, E(\omega)\, dm_G(\omega)\\
&=&\int_{\{\omega;\; \veps_n(\omega)=1\}} \langle \gamma_J,\omega\rangle\, \left[ E(\omega)-E(\omega+\delta^n)\right] dm_G(\omega) ,
\end{eqnarray*}
where $\delta^n\in G$ is the sequence which is $0$ everywhere except at the $n$-th coordinate. By assumption on $E$, we know that $\Vert  E(\omega)-E(\omega+\delta^n)\Vert\leq C\times 2^{-\alpha n}$;  and since the set $\{ \veps_n=1\}$ has measure $1/2$, this concludes the proof.
 \end{proof}
\end{proof}

\subsection{Construction of suitable $\TT$-eigenfields}\label{bordel02} The second part of our program is achieved by the following lemma. 

\begin{lemma}\label{superkey0} Put $\mathbf V :=\{ (x,\lambda)\in X\times \TT;\; x\neq 0\;{\rm and}\; T(x)=\lambda x\}$, and let $\bf Z\subset \mathbf V$. Assume that for any (relatively) open set $O\subset\mathbf V$ such that 
$O\cap\mathbf Z\neq\emptyset$, the set 
$\{ \lambda\in \TT;\;\exists x\, :\, (x,\lambda)\in O\}$ is uncountable. {Then,} {for any
$(x_0,\lambda_0)$ in $\bf Z$ and $\veps >0$,} {one can construct a $\TT$-eigenfield $(E,\phi)$ 
for $T$ on the Cantor group $(G, m_G)$} {such that}
\begin{itemize}
\item[\rm (1)] $\Vert E(\omega )-x_0\Vert<\veps$ for all $\omega\in G$;
{\item[\rm (2)] $E:G\to X$ is super-Lipschitz;}
{\item[\rm (3)] $\phi:G\to\TT$ is a homeomorphic embedding.}
\end{itemize}
\end{lemma}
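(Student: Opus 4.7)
The plan is to build the eigenfield by a recursion on the binary tree $\{0,1\}^{<\omega}$, which naturally indexes the Cantor group $G=\{0,1\}^{\NN}$. At each node $s$ I will choose a pair $(x_s,\lambda_s)\in\mathbf{Z}$ with $(x_\emptyset,\lambda_\emptyset)=(x_0,\lambda_0)$, together with nested disjoint closed arcs $J_{s0},J_{s1}\subset J_s\subset\TT$ containing $\lambda_{s0},\lambda_{s1}$ respectively, arranged so that the two children of each node are close in $X$ and separated in $\TT$. The eigenfield is then defined by
$$
E(\omega):=\lim_{n\to\infty}x_{\omega|_n},\qquad \phi(\omega):=\lim_{n\to\infty}\lambda_{\omega|_n}\qquad(\omega\in G).
$$

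Fix an exponent $\alpha>1$, a super-summable sequence $\delta_n>0$ with $\delta_n\le 2^{-\alpha n}$ and $\sum_n \delta_n<\veps$, and an auxiliary sequence $\eta_n\to 0$ controlling arc diameters. At the inductive step, given $(x_s,\lambda_s)\in\mathbf{Z}$ at level $n=|s|$ and its enclosing arc $J_s$ with $\lambda_s\in J_s$ and $\diam(J_s)<\eta_n$, I consider the relatively open set
$$
O_s:=\bigl\{(x,\lambda)\in\mathbf{V}:\ \|x-x_s\|<\delta_{n+1},\ \lambda\in J_s^{\circ}\bigr\}.
$$
Since $(x_s,\lambda_s)\in O_s\cap\mathbf{Z}$, the hypothesis yields that $\pi_2(O_s)$ is uncountable. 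From this I select two distinct eigenvalues $\lambda_{s0}\ne\lambda_{s1}$ with corresponding eigenvectors $x_{s0},x_{s1}$ such that $(x_{si},\lambda_{si})\in\mathbf{Z}$ (the hypothesis being applied in the natural form provided by Lemma~\ref{exhaust}, which furnishes pairs with eigenvectors in $\mathbf{Z}$ itself). Finally, I choose disjoint closed sub-arcs $J_{s0},J_{s1}\subset J_s$ of diameter $<\eta_{n+2}$ around $\lambda_{s0},\lambda_{s1}$, completing the recursion.

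The three conclusions follow directly from this bookkeeping. For (1), the telescoping estimate gives $\|E(\omega)-x_0\|\le\sum_{n\ge 0}\|x_{\omega|_{n+1}}-x_{\omega|_n}\|\le\sum_n\delta_n<\veps$. For (2), if $\omega,\omega'$ agree on their first $n$ coordinates, then both $E(\omega)$ and $E(\omega')$ lie within $\sum_{k\ge n}\delta_k=O(2^{-\alpha n})$ of $x_{\omega|_n}$, so $\|E(\omega)-E(\omega')\|=O(d(\omega,\omega')^{\alpha})$, establishing super-Lipschitz with exponent $\alpha$. For (3), $\phi$ is continuous as a uniform limit of locally constant maps, and it is injective because two distinct $\omega\ne\omega'$ split first at some coordinate $n$, whence $\phi(\omega)\in J_{s0}$ and $\phi(\omega')\in J_{s1}$ with $s=\omega|_n=\omega'|_n$, and these arcs are disjoint. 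Compactness of $G$ then upgrades injectivity to a homeomorphic embedding. The eigenfield identity $TE(\omega)=\phi(\omega)E(\omega)$ is obtained by passing to the limit in $Tx_{\omega|_n}=\lambda_{\omega|_n}x_{\omega|_n}$ using continuity of $T$.

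The delicate point is the inductive step, where one must simultaneously keep the new pair inside $\mathbf{Z}$ (so that the hypothesis re-applies at the next depth), keep $x_{si}$ close to $x_s$ in $X$ at rate $\delta_{n+1}$, and leave enough room in $J_s$ to host the disjoint sub-arcs $J_{s0},J_{s1}$ with further room for all the binary splittings at deeper levels. The uncountability granted by the hypothesis is exactly what makes these competing constraints compatible, providing continuum-many candidate eigenvalues inside any neighbourhood meeting $\mathbf{Z}$ and thus ample freedom both to stay close and to separate.
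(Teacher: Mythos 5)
Your proof is correct and follows essentially the same route as the paper: a Cantor-like construction over the binary tree, using the uncountability hypothesis at each node to pick two distinct eigenvalues (hence two distinct eigenvectors) inside shrinking neighbourhoods, with geometric control $\delta_n\le 2^{-\alpha n}$ giving the super-Lipschitz bound and disjoint nested arcs giving the embedding $\phi$. Your remark that the hypothesis must be read as uncountability of $\pi_2(O\cap\mathbf Z)$ (as supplied by Lemma \ref{exhaust}) so that the selected pairs stay in $\mathbf Z$ is exactly how the paper's own proof uses it.
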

\begin{proof} This is a standard Cantor-like construction. Let us denote by $\mathcal S$ the set of all finite $0$-$1$ sequences (including the empty sequence $\emptyset$). We write $\vert s\vert$ for the length of a sequence $s\in\mathcal S$, and if $i\in\{ 0,1\}$ we denote by $si$ the 
sequence ``$s$ followed by $i$".

Put $V_\emptyset:=\TT$ and choose an open set $U_\emptyset\subset X$ with $x_0\in U_\emptyset$ and $\diam (U_\emptyset)<\veps$. Let us also fix an arbitrary H\"older exponent $\alpha >1$. We construct inductively two sequences of open sets $(U_s)_{s\in\mathcal S}$ in $X$ and 
$(V_s)_{s\in\mathcal S}$ in $\TT$, such that the following properties hold for all $s\in\mathcal S$ and $i\in\{ 0,1\}$.

\begin{enumerate}[\rm (i)]
\item $\overline{U_{si}}\subset U_s$ and $\overline{V_{si}}\subset V_s$;
\item $\overline{U_{s0}}\cap\overline{U_{s1}}=\emptyset$ and $\overline{V_{s0}}\cap\overline{V_{s1}}=\emptyset$;
\item $\diam(U_s)\leq 2^{-\alpha\vert s\vert}$ and $\diam (V_{si})\leq\frac 12\, \diam (V_s)$;
\item $(U_s\times V_s)\cap\mathbf Z\neq \emptyset$.
\end{enumerate}

The inductive step is easy. Assume that $U_s$ and $V_s$ have been constructed for some $s$. Since $(U_s\times V_s)\cap\mathbf Z\neq \emptyset$, we know that the set $\{ \lambda\in \TT;\;\exists x\, :\, (x,\lambda)\in (U_s\times V_s)\cap \mathbf Z\}$ is uncountable, and hence contains 
at least 2 distinct points $\lambda^0, \lambda^1$. Pick $x^i\in U_{s}$ such that $(x^i,\lambda^i)\in\mathbf Z$. Then $x^0\neq x^1$ because $T(x^i)=\lambda^i x^i$ and $x^i\neq 0$. Thus, one may choose small enough open sets $U_{si}\ni x^i$ and $V_{si}\ni \lambda^i$ 
to ensure (i),$\,\dots $, (iv) at steps $s0$ and $s1$.

\smallskip
For any $\omega\in G$ and $n\in\NN$, let us denote by $\omega_{\vert n}$ the finite sequence $(\omega_0,\dots ,\omega_n)\in\mathcal S$. By (i), (ii), (iii), the intersection $\bigcap_{n\geq 0} U_{\omega_{\vert n}}$ is a single point $\{ x_\omega\}$, and the intersection $\bigcap_{n\geq 0} V_{\omega_{\vert n}}$ is a single point $\{ \lambda_\omega\}$. Moreover, the map $\omega\mapsto \lambda_{\omega}:=\phi(\omega)$ is a homeomorphic embedding, and the map $\omega\mapsto x_\omega:=E(\omega)$ is $\alpha$-H\"olderian 
and hence super-Lipschitz (by (iii)). Finally, it follows easily from the continuity of $T$ that $T(x_\omega)=\lambda_\omega x_\omega$ for every $\omega\in G$. In other words, $(E,\phi)$ is a $\TT$-eigenfield for $T$. Since 
$\Vert E(\omega)-x_0\Vert\leq \diam(U_\emptyset)<\veps$ for all $\omega\in G$, this concludes the proof.

\end{proof}

\subsection{The proof}\label{proofWM} We now just have to put the pieces together. Let us recall what we are trying to do: we are given an operator $T\in\mathfrak L(X)$ whose $\TT$-eigenvectors are perfectly spanning, and we want to show that $T$ is weakly mixing in the Gaussian sense.

\smallskip
Let $\mathbf V :=\{ (x,\lambda)\in X\times \TT;\; x\neq 0\;{\rm and}\; T(x)=\lambda x\}$. Applying Lemma \ref{exhaust} to the family of continuous measures, we get a closed set $\mathbf Z\subset \mathbf V$ with the following properties:
\begin{itemize}
\item $\mathbf Z$ satisfies the assumption of Lemma \ref{superkey0}, \mbox{i.e.} for any $O\subset \mathbf V$ open such that $O\cap \mathbf Z\neq \emptyset$, the set $\{ \lambda\in \TT;\;\exists x\, :\, (x,\lambda)\in O\}$ is uncountable;
\item ${\rm span}\left( \pi_1(\mathbf Z)\right)$ 
is dense in $X$.

\end{itemize}

Let us fix a countable dense set $\{ (x_i,\lambda_i);\; i\in\NN\}\subset\mathbf Z$, and let us apply Lemma \ref{superkey0} to each point $(x_i,\lambda_i)$, with \mbox{e.g.} $\veps_i=2^{-i}$. Taking Lemma \ref{halfkey0} into account and since $\veps_i\to 0$, 
this give a sequence of $\TT$-eigenfields 
$(E_i,\phi_i)$ defined on $(\Omega_i, m_i):=(G,m_G)$, such that
\begin{itemize}
\item[\rm (1)] each operator $K_{E_i} :L^2(\Omega_i, m_i)\to X$ is gamma-radonifying;
\item[\rm (2)] each $E_i$ is continuous and $\overline{\rm span}\left(\bigcup_{i\in\NN} E_i(\Omega_i)\right)=X$;
\item[\rm (3)] each $\phi_i$ is one-to-one.
\end{itemize}

Now, let $(\Omega, m)$ be the ``disjoint union" of the measure spaces $(\Omega_i,m_i)$ (so that $L^2(\Omega, m)$ is the $\ell^2$-direct sum $\oplus_i L^2(\Omega_i, m_i)$). 
Choose a sequence of small positive numbers $(\alpha_i)_{i\in\NN}$, and define a $\TT$-eigenfield 
$(E,\phi)$ on $(\Omega, m)$ as follows: $E(\omega_i)=\alpha_i E_i(\omega_i)$ and $\phi(\omega_i)=\phi_i(\omega_i)$ for each $i$ and every $\omega_i\in\Omega_i$. If the $\alpha_i$ are small enough, then $E$ is indeed in $L^2(\Omega, m)$ and (using property (1)) the operator 
$K_E:L^2(\Omega, m)\to X$ is gamma-radonifying. By (2) and since $m$ has full support, the vector field $E$ is $m$-spanning and hence the operator $K_E$ has dense range. Moreover, since $TK_{E_i}=K_{E_i}M_{\phi_i}$ for all $i$, the intertwining equation 
$TK_E=K_EM_\phi$ holds. Finally, since $(\Omega_i,m_i)$ is atomless, it follows from (3) that each measure $m_{i}\circ\phi_i^{-1}$ is continuous, and hence that 
$\sigma=m\circ\phi^{-1}$ is continuous as well. 
By Proposition \ref{background}, we conclude that $T$ is weakly mixing in the Gaussian sense.

\section{Proof of the abstract results (1)}\label{proofabstract1}

In this section, we prove Theorem \ref{abstract}. 
The proof runs along exactly the same lines as that of the weak mixing case: we first explain how to produce gamma-radonifying operators of the form $K_E$ in a flexible way, then we show 
how this can be used to construct suitable $\TT$-eigenfields for $T$ under the $\mathbf S$-perfectly spanning assumption, and the conclusion follows easily. However, the first two steps  are technically more involved than the corresponding ones from the weak 
mixing case, the difficulty being not less important when dealing with the strong mixing case only. Roughly speaking, the main reason is that it is much harder for a measure to be Rajchman, or more generally to be $\mathbf S$-continuous, than to be merely continuous. 

\smallskip 
To avoid artificial complications, we shall first assume that the underlying space $X$ is a Banach space (which will dispense us from using sequences of semi-norms), and we will indicate only at the very end of the proof how it can be adapted in the Fr\'echet space case (this is really a matter of very minor changes). Thus, in most of this section, $T$ is a linear operator acting on a complex separable Banach space $X$. We are also given a $c_0$-like translation ideal $\mathbf S\subset\ell^\infty (\ZZ_+)$ such that 
every $\mathbf S$-continuous measure is continuous. We are assuming that the $\TT$-eigenvectors of $T$ are $\mathbf S$-perfectly spanning, and our aim is to show that $T$ is $\mathbf S$-mixing in the Gaussian sense.

\subsection{How to construct gamma-radonifying operators}\label{bordel1}
Just as in the weak mixing case, the guiding idea of the first part of our program is the observation that we made in sub-section \ref{bordel01}, namely that if $E:\Omega\to X$ is a vector field defined on a compact abelian group $\Omega$ (with Haar measure $m$ and dual group $\Gamma$), then the operator $K_E:L^2(\Omega,m)\to X$ is gamma-radonifying as soon as 
$$\sum_{\gamma\in \Gamma} \Vert\widehat E(\gamma )\Vert <\infty\, .$$

\smallskip
Unfortunately, we are not able to use this observation as stated. Instead of compact groups, we will be forced to use some slightly more complicated objects $(\Omega ,m)$, due to the structure of the inductive construction that will 
be performed in the \emph{second part} of our program.

\medskip
Let us denote by $\mathfrak Q$ the set of all finite sequences of integers $\bar q=(q_1,\dots ,q_l)$ with $q_s\geq 2$ for all $s$. For any $\bar q=(q_1,\dots ,q_l)\in\mathfrak Q$, we put 
$$\Omega(\bar q):=\bigsqcup_{s=1}^l\Omega(q_s)\, ,$$
where $\Omega (q)=\{ \xi\in\TT;\; \xi^{q}=\bf 1\}$ is the group of all $q$-roots of $\bf 1$, and the symbol $\sqcup$ stands for a ``disjoint union". 

The following notation will be useful: for any $\bar q=(q_1,\dots ,q_l)\in\mathfrak Q$, we set
$$l(\bar q)=l\;\;{\;\rm and}\;\;\;w(\bar q) =q_1+\dots +q_l .$$
In particular, $\Omega(\bar q)$ has cardinality $w( \bar q)$. 

We endow  each finite group $\Omega (q)$ with its normalized Haar measure $m_q$ (\mbox{i.e.} the normalized counting measure), and each $\Omega(\bar q)=\Omega (q_1,\dots ,q_l)$ with the probability measure 
$$m_{\bar q}=\frac{1}{l(\bar q)}\sum_{s=1}^{l(\bar q)} m_{q_s}\, .$$
Here, of course, $\Omega (q_s)$ is considered as a subset of $\Omega(\bar q)$, so that the measures $m_{q_s}$ are disjointly supported.

For any infinite sequence $\mathbf q=(\bar q_n)_{n\geq 1}\in\mathfrak Q^\NN$, we put 
$$\Omega (\mathbf q):=\prod_{n=1}^\infty \Omega (\bar q_n)\, ,$$
and we endow $\Omega (\mathbf q)$ with the product measure 
$$m_{\mathbf q}=\bigotimes_{n=1}^\infty m_{\bar q_n}\, .$$

\medskip
Given $\mathbf q=(\bar q_n)_{n\geq 1}\in \mathfrak Q^\NN$, it is not difficult to describe a ``canonical" orthonormal basis for $L^2(\Omega (\mathbf q),m_{\mathbf q})$. However, we have to be careful with the notation.

For each sequence $\bar q=(q_1,\dots ,q_l)\in\mathfrak Q$, let us denote by $\Gamma (\bar q)$ the disjoint union $\bigsqcup_{s=1}^l \Gamma(q_s)$, where 
$\Gamma (q)$ is the dual group of $\Omega (q)$, \mbox{i.e.} $\Gamma(q)=\ZZ_q$. For any $\gamma_s\in \Gamma (q_s)\subset\Gamma (\bar q)$, define a function $e_{\gamma_s}:\Omega (\bar q)\to \CC$ by 
$$e_{\gamma_s} (\xi)=\left\{
\begin{array}{cl}
\sqrt{l(\bar q)}\, \langle \gamma_s, \xi\rangle & {\rm if\;\;}\xi\in\Omega (q_s),\\
0&{\rm otherwise.}
\end{array}
\right.
$$

Since the characters of $\Omega (q)$ form an orthonormal basis of $L^2(\Omega (q), m_q)$ for every positive integer $q$, 
it is clear that the $e_{\gamma_s}$, $\gamma_{s}\in\Gamma (\bar q)$ form an orthonormal basis of $L^2(\Omega (\bar q), m_{\bar q})$ for every $\bar q\in \mathfrak Q$.

Now, write each $\bar q_n$ as $\bar q_n=(q_{n,1}, \dots ,q_{n,l_n})$, and let us denote by $\Gamma (\mathbf q)$  the set of all finite sequences of the form 
$\gamma=(\gamma_{s_1},\dots ,\gamma_{s_N})$ with $\gamma_{s_n}\in\Gamma (q_{n,s_n})\subset \Gamma (\bar q_n)$ for all $n$ and $\gamma_{s_N}\neq 0$. For any $\gamma=(\gamma_{s_1},\dots ,\gamma_{s_N})\in\Gamma (\mathbf q)$, we define
$e_\gamma :\Omega (\mathbf q)\to \CC$ as expected:

$$e_\gamma (\omega)=\prod_{n=1}^N e_{\gamma_{s_n}}(\omega _n)\, .$$
In other words, $e_\gamma=e_{\gamma_{s_1}}\otimes\cdots\otimes e_{\gamma_{s_N}}$. We also include the empty sequence $\emptyset$ in $\Gamma (\mathbf q)$ and we put $e_\emptyset=\mathbf 1$. The following lemma is essentially obvious:
\begin{lemma} The family $(e_\gamma)_{\gamma\in\Gamma (\mathbf q)}$ is an orthonormal basis of $L^2(\Omega (\mathbf q), m_{\bf q})$, for any $\mathbf q\in\mathfrak Q^\NN$. 
\end{lemma}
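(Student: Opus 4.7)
The proof is a standard two-step argument reducing to the finite-factor case via Fubini.

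First, I would verify that for each $\bar q\in\mathfrak Q$, the system $(e_{\gamma_s})_{\gamma_s\in\Gamma(\bar q)}$ is an orthonormal basis of $L^2(\Omega(\bar q),m_{\bar q})$. Orthonormality is immediate: for $\gamma_s\in\Gamma(q_s)$ and $\gamma'_t\in\Gamma(q_t)$ with $s\neq t$ the functions have disjoint supports $\Omega(q_s),\Omega(q_t)$ in $\Omega(\bar q)$, while for $s=t$ the factor $\sqrt{l(\bar q)}$ in the definition squares to $l(\bar q)$, cancelling the $1/l(\bar q)$ weight that $m_{\bar q}$ assigns to the piece $\Omega(q_s)$, and reducing the integral to the standard character orthogonality on the finite group $\Omega(q_s)$. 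Since $|\Gamma(\bar q)|=w(\bar q)=|\Omega(\bar q)|=\dim L^2(\Omega(\bar q),m_{\bar q})$, the orthonormal family is automatically a basis.

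Orthonormality of the full family $(e_\gamma)_{\gamma\in\Gamma(\mathbf q)}$ on the product is then a Fubini computation. For $\gamma=(\gamma_{s_1},\dots,\gamma_{s_N})$ and $\gamma'=(\gamma'_{t_1},\dots,\gamma'_{t_M})$ with $N\leq M$, the inner product $\langle e_\gamma,e_{\gamma'}\rangle_{L^2(m_{\mathbf q})}$ factors across coordinates. For $n\leq N$ the one-factor identity yields $\delta_{s_n,t_n}\delta_{\gamma_{s_n},\gamma'_{t_n}}$; for $N<n\leq M$ one computes
\[
\int\overline{e_{\gamma'_{t_n}}}\,dm_{\bar q_n}=\frac{1}{\sqrt{l(\bar q_n)}}\,\mathbf 1_{\{\gamma'_{t_n}=0\}},
\]
which vanishes at $n=M$ because $\gamma'_{t_M}\neq 0$ by the very definition of $\Gamma(\mathbf q)$. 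Thus the inner product is zero when $N<M$ and equals $\delta_{\gamma,\gamma'}$ when $N=M$. The element $e_\emptyset=\mathbf 1$ has norm one and is orthogonal to every $e_\gamma$ with $N\geq 1$ by the same last-factor vanishing.

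For totality I would invoke the density of cylinder functions in $L^2(\Omega(\mathbf q),m_{\mathbf q})$ and show that each cylinder basis element $e_{\gamma^{(1)}_{s_1}}\otimes\cdots\otimes e_{\gamma^{(k)}_{s_k}}$ coming from the one-factor ONBs lies in $\overline{\mathrm{span}}\{e_\gamma:\gamma\in\Gamma(\mathbf q)\}$. The key structural fact is the convention that $e_\gamma$ is interpreted as constant in coordinates beyond its length $N$: combined with the identity $\mathbf 1=\frac{1}{\sqrt{l(\bar q_n)}}\sum_se_{0_s}$, this lets one trade a trailing trivial tensor factor at position $n$ for a sum over zero characters at later positions, while the constraint $\gamma_{s_N}\neq 0$ prevents double counting. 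The main obstacle is making this combinatorial bookkeeping airtight: I would proceed by induction on the position of the last trivial factor in the trailing string, substituting the constant-expansion identity at each step until the last remaining factor is nontrivial and the resulting tensor is genuinely indexed by an element of $\Gamma(\mathbf q)$.
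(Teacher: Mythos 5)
Your treatment of the single factor and your orthonormality computation are both correct, and since the paper dismisses this lemma as ``essentially obvious'' without argument, that part of your write-up already supplies more detail than the authors do. The genuine gap is in the totality step, and it is not a matter of making the ``combinatorial bookkeeping airtight'': the substitution $\mathbf 1=\frac{1}{\sqrt{l(\bar q_n)}}\sum_s e_{0_s}$ replaces a trailing constant factor at position $n$ by a sum of \emph{zero characters at that same position}; it never converts a zero character into a nonzero one. So your induction does not terminate on a cylinder tensor $e_{\gamma^{(1)}}\otimes\cdots\otimes e_{\gamma^{(k)}}\otimes\mathbf 1\otimes\cdots$ whose last nonconstant factor is a zero character $e_{0_s}$ of a block $\bar q_k$ with $l(\bar q_k)\geq 2$. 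Indeed, inside the single factor $L^2(\Omega(\bar q_k),m_{\bar q_k})$ the function $e_{0_s}=\sqrt{l(\bar q_k)}\,\mathbf 1_{\Omega(q_{k,s})}$ belongs to ${\rm span}\bigl(\{\mathbf 1\}\cup\{e_{\gamma}:\gamma\neq 0\}\bigr)$ only when $l(\bar q_k)=1$, since the $e_{0_s}$ span an $l(\bar q_k)$-dimensional space orthogonal to all nonzero characters and containing $\mathbf 1$.

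In fact no bookkeeping can close this gap, because the family is genuinely not total as soon as some $l(\bar q_n)\geq 2$. Suppose $l(\bar q_1)\geq 2$ and let $h:=e_{0_1}-e_{0_2}$, viewed as a function on $\Omega(\mathbf q)$ depending only on the first coordinate. Then $h\neq 0$; $\langle h,e_\emptyset\rangle=\frac{1}{\sqrt{l(\bar q_1)}}-\frac{1}{\sqrt{l(\bar q_1)}}=0$; $h\perp e_\gamma$ for every $\gamma$ of length $1$ by single-factor orthonormality; and $h\perp e_\gamma$ for every $\gamma$ of length $N\geq 2$ because the inner product contains the factor $\int\overline{e_{\gamma_{s_N}}}\,dm_{\bar q_N}=0$. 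Thus $h$ is a nonzero vector orthogonal to the whole family $(e_\gamma)_{\gamma\in\Gamma(\mathbf q)}$, and the lemma as literally stated fails whenever some $l(\bar q_n)\geq 2$ (which is the case of interest in the paper, where the blocks $\bar q_n$ produced by the inductive construction have arbitrary length). The statement is correct exactly when every $l(\bar q_n)=1$, in which case each single-factor basis contains $\mathbf 1$ and the standard infinite tensor product argument --- essentially the one you sketch --- goes through. To get a correct basis in general one must, in each factor, replace the $l_n$ functions $e_{0_1},\dots,e_{0_{l_n}}$ by an orthonormal basis of their (common) span that contains $\mathbf 1$, and only then form the finitely supported tensors. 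Your orthonormality computation survives this repair unchanged, but your completeness argument, like the lemma itself, cannot be salvaged in the form given.
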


\smallskip
We note that if $\mathbf q=(\bar q_n)_{n\geq 1}\in\mathfrak Q^\NN$ and each sequence $\bar q_n$ has length $1$, \mbox{i.e.} $\bar q_n=(q_n)$ for some $q_n\geq 2$, then $(\Omega (\mathbf q), m_{\mathbf q})$ is just the compact group 
$\prod_{n\geq 1} \Omega (q_n)$ with its normalized Haar measure, and $\Gamma (\mathbf q)$  ``is" the character group of $\Omega (\mathbf q)$. Moreover, if $E:\Omega(\mathbf q)\to X$ is a vector field on $(\Omega (\mathbf q), m_{\mathbf q})$ 
then $K_E(e_\gamma)$ is the Fourier coefficient $\widehat E(\gamma )$, for any $\gamma\in\Gamma (\mathbf q)$. Accordingly, we shall use the following notation for an \emph{arbitrary} $\mathbf q\in\mathfrak Q^\NN$: given a vector field $E:\Omega(\mathbf q)\to X$ on $(\Omega (\mathbf q), m_{\mathbf q})$, 
we set 
$$\widehat E(\gamma )=K_E(e_\gamma)$$
for every $\gamma\in\Gamma (\mathbf q)$.

\medskip
Next, we introduce a partially defined ``metric" on every $\Omega (\mathbf q)$, as follows. Write $\mathbf q=(\bar q_n)_{n\geq 1}$ and $\bar q_n=(q_{n,1},\dots , q_{n,l_n})$. If $\omega=(\omega_n)$ and $\omega'=(\omega'_n)$ are 
distinct elements of $\Omega (\bf q)$, let us denote by $n(\omega, \omega')$ the smallest $n$ such that $\omega_n\neq \omega'_n$. If $\omega_n$ and $\omega'_n$ are in the same 
$\Omega (q_{n(\omega,\omega'), s})$, we denote this $s$ by $s(\omega,\omega')$ and we put 
$$d_{\mathbf q}(\omega,\omega'):=\frac{1}{w(\bar q_1) \cdots w(\bar q_{n(\omega,\omega')-1})}\times \frac{1}{l_{n(\omega,\omega')}^{1/4}\, q_{n(\omega,\omega'), s(\omega,\omega')}}\, \cdot$$
(The value of an empty product is declared to be $1$). Otherwise, $d_{\mathbf q} (\omega,\omega')$ is not defined.

\begin{definition} Let $\mathbf q\in\mathfrak Q^\NN$. We shall say that a map $E:\Omega(\mathbf q)\to X$ is \emph{super-Lipschitz} if
$$\Vert E(\omega)-E(\omega')\Vert\leq C\, d_{\mathbf q} (\omega,\omega')^2$$
for some finite constant $C$, whenever $d_{\mathbf q}(\omega,\omega')$ is defined.
\end{definition}

The terminology is arguably not very convincing, since super-Lipschitz maps need not be continuous. Moreover, the definition of $d_{\bf q}$ may look rather special, mainly because of the strange term $l_{n(\omega,\omega')}^{1/4}$. We note, however, that when all sequences $\bar q_n$ have length $1$, say $\bar q_n=(q_n)$, then 
$d_{\mathbf q}$ is a quite natural true (ultrametric) distance on the Cantor-like group $\Omega(\mathbf q)=\prod_{n\geq 1} \Omega (q_n)$: 
$$d_{\mathbf q}(\omega,\omega')=\frac{1}{q_1\cdots q_{n(\omega, \omega')}}\, \cdot$$
In this case, the terminology ``super-Lipschitz" seems adequate (forgetting that we allowed arbitrary H\"older exponents $\alpha >1$ in the weak mixing case).
More importantly, the following lemma is exactly what is needed to carry out the first part of our program.

\begin{lemma}\label{halfkey} Let $\mathbf q\in\mathfrak Q^\NN$. If $E:\Omega(\mathbf q)\to X$ is a super-Lipschitz vector field on $(\Omega (\mathbf q), m_{\mathbf q})$, then 
$$\displaystyle\sum_{\gamma\in\Gamma(\mathbf q)} \Vert \widehat E(\gamma)\Vert<\infty\, .$$
\end{lemma}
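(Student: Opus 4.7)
The plan is to mimic the argument in Lemma \ref{halfkey0}, but now keeping careful track of the two-level index structure (the ``level'' $n$ in the product, and the ``branch'' $s_n$ selecting which sub-group $\Omega(q_{n,s_n})$ the coordinate lies in). Each character $\gamma\in\Gamma(\mathbf q)$ is described by its length $N$, a branch sequence $(s_1,\dots,s_N)$, and characters $\gamma_{s_n}\in\Gamma(q_{n,s_n})$ with $\gamma_{s_N}\neq 0$. I would group the sum $\sum_\gamma\|\widehat E(\gamma)\|$ according to the ``terminal data'' $(N,s):=(N,s_N)$ and bound the corresponding block.

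The core computation is as follows. Fix $\gamma$ with terminal data $(N,s)$. Integrating out the ``tail'' coordinates $\omega_{N+1},\omega_{N+2},\dots$ and using that $e_{\gamma_{s_n}}$ vanishes off $\Omega(q_{n,s_n})$ plus the normalization factor $\sqrt{l_n}$, one writes
\[
\widehat E(\gamma)=\frac{1}{\sqrt{l_1\cdots l_N}}\int\prod_{n=1}^N\langle\gamma_{s_n},\omega_n\rangle\,F(\omega_1,\dots,\omega_N)\,d(m_{q_{1,s_1}}\otimes\cdots\otimes m_{q_{N,s_N}}),
\]
where $F$ is the partial integral of $E$. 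Because $\gamma_{s_N}\neq 0$, the integral of $\langle\gamma_{s_N},\cdot\rangle$ over $\Omega(q_{N,s})$ vanishes, so $F(\omega_1,\dots,\omega_N)$ may be replaced by $F(\omega_1,\dots,\omega_{N-1},\omega_N)-F(\omega_1,\dots,\omega_{N-1},\omega_N^\star)$ for any fixed $\omega_N^\star\in\Omega(q_{N,s})$. By construction, the two corresponding full points of $\Omega(\mathbf q)$ differ first at coordinate $N$ and lie in the \emph{same} sub-group $\Omega(q_{N,s})$, so $d_{\mathbf q}$ is defined and equals $[w(\bar q_1)\cdots w(\bar q_{N-1})]^{-1}\cdot[l_N^{1/4}q_{N,s}]^{-1}$. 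The super-Lipschitz estimate (squared!) then yields
\[
\|\widehat E(\gamma)\|\leq\frac{C}{\sqrt{l_1\cdots l_N}}\cdot\frac{1}{[w(\bar q_1)\cdots w(\bar q_{N-1})]^2}\cdot\frac{1}{l_N^{1/2}\,q_{N,s}^2}.
\]

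Now I would count characters with fixed terminal data $(N,s)$: for each $n<N$ there are $\sum_{s_n}q_{n,s_n}=w(\bar q_n)$ possibilities for the pair $(s_n,\gamma_{s_n})$, while for $n=N$ there are at most $q_{N,s}$ possibilities for $\gamma_{s_N}\in\Gamma(q_{N,s})\setminus\{0\}$. Multiplying by the above bound and summing first over $s\in\{1,\dots,l_N\}$ using $\sum_s 1/q_{N,s}\leq l_N/2$ gives
\[
\sum_{\gamma\text{ of length }N}\|\widehat E(\gamma)\|\leq\frac{C/2}{\sqrt{l_1\cdots l_{N-1}}\cdot w(\bar q_1)\cdots w(\bar q_{N-1})}.
\]
Since each factor $1/[\sqrt{l_n}w(\bar q_n)]\leq 1/2$ (because $l_n\geq 1$ and $w(\bar q_n)\geq 2$), summing over $N\geq 1$ produces a convergent geometric series (and the empty character contributes a single finite term), which proves absolute summability.

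The main obstacle is precisely the arithmetic balancing in the last paragraph: one must see why the definition of $d_{\mathbf q}$ — in particular the mysterious exponent $\tfrac14$ on $l_n$ — is exactly what is needed. The factors arise from three independent sources: the $\sqrt{l_1\cdots l_N}$ from the normalization of the basis functions $e_{\gamma_{s_n}}$, the character count, which for the terminal level contributes $l_N\cdot q_{N,s}$ (one $l_N$ from choosing the branch $s$, and the rest from choosing $\gamma$), and finally the \emph{square} of $d_{\mathbf q}$ coming from the super-Lipschitz inequality. The $l_N^{1/4}$ ensures that, after squaring, the $l_N^{1/2}$ in the denominator of the estimate combines with $\sqrt{l_N}$ from the normalization to cancel the $l_N$ from the counting step, leaving an integrable tail.
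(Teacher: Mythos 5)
Your proof is correct and follows essentially the same route as the paper: exploit the vanishing mean of the nontrivial terminal character to replace $E$ by a difference of values at points where $d_{\mathbf q}$ is defined, apply the super-Lipschitz bound, and then count the characters with given terminal data $(N,s)$ so that the factors of $l_N$, $q_{N,s}$ and $w(\bar q_n)$ cancel. (Your per-character estimate is in fact slightly sharper, since you restrict the integral to the support of $e_\gamma$ rather than to the slice $\{\omega_N=\mathbf 1\}$, but this changes nothing essential.)
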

\begin{proof} Throughout the proof we put $\mathbf q=(\bar q_n)_{n\geq 1}$, and each $\bar q_n$ is written as $\bar q_n=(q_{n,1},\dots ,q_{n, l_n})$. 

For notational simplicity, we write $\Gamma$ instead of $\Gamma (\mathbf q)$. We denote by $\vert\gamma\vert$ the length of a sequence $\gamma\in\Gamma$. That is, $\vert\emptyset\vert=0$ and $\vert \gamma\vert =N$ if 
$\gamma=(\gamma_{s_1}, \dots ,\gamma_{s_N})$ with $\gamma_{s_n}\in \Gamma (q_{n,s_n})$ for all $n$ and $\gamma_{N, s_N}\neq 0$.  Then $\Gamma$ is partitioned as 
$$\Gamma=\bigcup_{N=0}^\infty \Gamma_N\, $$
where $\Gamma_N=\{\gamma\in\Gamma;\; \vert\gamma\vert=N\}$. 

For any $\gamma=(\gamma_{s_1},\dots ,\gamma_{s_N})\in\Gamma_N$, we put $s(\gamma)=s_N$. In other words, $s(\gamma)$
 is the unique $s\in\{ 1,\dots ,l_N\}$ such that the $N$-th coordinate of $\gamma$ belongs to 
$\Omega(q_{N, s})$.

\begin{fact*} If $E:\Omega(\mathbf q)\to X$ is super-Lipschitz with constant $C$, then 
$$\Vert\widehat E(\gamma)\Vert\leq \frac{C}{l_N}\,\sqrt{l_1\cdots l_{N-1}} \prod_{n\leq N-1} w(\bar q_n)^{-2}\times \frac{1}{q_{N,s(\gamma)}^2}$$
for every $\gamma\in \Gamma_N$, $N\geq 1$.
\end{fact*}

\begin{proof}[Proof of Fact] Let us fix $\gamma=(\gamma_{s_1},\dots ,\gamma_{s_N})\in\Gamma_N$, so that $s(\gamma)=s_N$. For notational simplicity (again), we put $q_\gamma=q_{N, s_N}=q_{N,s(\gamma)}$. For any $\xi\in\Omega (q_{\gamma})$ and $\omega\in\Omega(\mathbf q)$ with $\omega_N\in\Omega (q_{\gamma})$, 
let us denote by $\xi\omega$ the element $\omega'$ of $\Omega(\mathbf q)$ defined by $\omega'_n=\omega_n$ if $n\neq N$ and 
$\omega'_N=\xi\omega_N$. 
Then 
\begin{eqnarray*}
\widehat E(\gamma)&=&\int_{\{\omega;\;\omega_N\in\Omega(q_{\gamma})\}} e_\gamma (\omega)\, E(\omega)\, dm_{\bf q}(\omega)\\
&=&\sum_{\xi\in\Omega(q_{\gamma})}\int_{\{\omega;\; \omega_N=\xi\}}e_\gamma (\omega)\, E(\omega)\, dm_{\bf q}(\omega)\\
&=&\sum_{\xi\in\Omega(q_{\gamma})} \int_{\{\omega;\; \omega_N=\mathbf 1_{\Omega (q_\gamma)}\}} e_\gamma( \xi\omega)\, E(\xi\omega)\, dm_{\bf q}(\omega)\\
&=&\int_{\{\omega;\;\omega_N=\mathbf 1_{\Omega (q_\gamma)}\}} e_\gamma (\omega)\times\left(\sum_{\xi\in\Omega(q_{\gamma})} {\langle\gamma_{s_N}, \xi\rangle}E(\xi\omega)\right) dm_{\bf q}(\omega).
\end{eqnarray*}

Now, $\gamma_{s_N}$ is assumed to be a non-trivial character of $\Omega(q_{\gamma})$. So we have $$\sum_{\xi\in\Omega (q_{\gamma})}  {\langle\gamma_{s_N}, \xi\rangle}=0\, ,$$ and it follows that 
$$\widehat E(\gamma)=\int_{\{\omega;\;\omega_N=\mathbf 1_{\Omega (q_\gamma)}\}} e_\gamma (\omega)\times\left[\sum_{\xi\in\Omega(q_{\gamma})} {\langle\gamma_{s_N}, \xi\rangle}\, \Bigl(E(\xi\omega)-E(\omega)\Bigr)\right] dm_{\mathbf q}(\omega)\, .$$

 By the definition of a super-Lipschitz map, since $\vert e_\gamma (\omega)\vert\leq \sqrt{l_1\cdots l_N}$ and since the set $\{\omega;\; \omega_N=\mathbf 1_{\Omega(q_\gamma)}\}$ has $m_{\bf q}$-measure $1/l_Nq_{\gamma}$, we conclude that 
 \begin{eqnarray*}\Vert \widehat E(\gamma)\Vert&\leq &\sqrt{l_1\cdots l_N}\times \frac{1}{l_N}\times C\prod_{n\leq N-1} w( \bar q_n)^{-2}\times \frac{1}{l_N^{1/2}\, q_{N,s(\gamma)}^2}\, ,  \end{eqnarray*}
 which is the required estimate.
\end{proof}

It is now easy to conclude the proof of the lemma. For each $N\geq 1$ and every $s\in\{ 1,\dots ,l_N\}$, the set $\Gamma_{N,s}=\{ \gamma\in\Gamma_N;\; s(\gamma)=s\}$ has cardinality
$$ \vert\Gamma_{N,s}\vert=\prod_{n\leq N-1} w( \bar q_n)\times q_{N,s}\, .$$
By the above fact and since $w(\bar q_n)=q_{n,1}+\dots +q_{n,l_n}\geq 2l_n$ for all $n$, it follows that

\begin{eqnarray*}
\sum_{\gamma\in\Gamma_{N, s}}  \Vert \widehat E(\gamma)\Vert&\leq&\frac{C}{l_N}\,\sqrt{l_1\cdots l_{N-1}}\,\prod_{n\leq N-1} w(\bar q_n)^{-1}\times \frac{1}{q_{N,s}}\\
&\leq& \frac{C}{l_N}\, 2^{-N}
\end{eqnarray*}
for each $s\in\{ 1,\dots ,l_N\}$. Hence, we get 
$\sum\limits_{\gamma\in\Gamma_{N}}  \Vert \widehat E(\gamma)\Vert\leq C\, 2^{-N}$
for every $N\geq 1$, and the result follows.

\end{proof}

\medskip
\subsection{Construction of suitable $\TT$-eigenfields}\label{bordel2}
We now turn to the second part of our program, which is the most technical one. Our aim is to prove the following lemma.

\begin{lemma}\label{superkey} Put $\mathbf V :=\{ (x,\lambda)\in X\times \TT;\; x\neq 0\;{\rm and}\; T(x)=\lambda x\}$, and let $\bf Z$ be a closed subset of $\mathbf V$. Assume that for any (relatively) open set $O\subset\mathbf V$ such that 
$O\cap\mathbf Z\neq\emptyset$, the set 
$\{ \lambda\in \TT;\;\exists x\, :\, (x,\lambda)\in O\}$ is not $\mathbf S$-small. {Then,} {for any
$(x_0,\lambda_0)$ in $\bf Z$ and $\veps >0$,} {one can construct a $\TT$-eigenfield $(E,\phi)$ 
for $T$ on some $(\Omega(\mathbf q), m_{\mathbf q})$} {such that}
\begin{itemize}
\item[\rm (1)] $\Vert E(\omega )-x_0\Vert<\veps$ for all $\omega\in \Omega(\mathbf q)$;
{\item[\rm (2)] $E:\Omega(\mathbf q)\to X$ is super-Lipschitz;}
{\item[\rm (3)] $\phi:\Omega(\mathbf q)\to\TT$ is a homeomorphic embedding, and the image measure $\sigma=m_{\bf q}\circ\phi^{-1}$ is $\mathbf S$-continuous.}

\end{itemize}
\end{lemma}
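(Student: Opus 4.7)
The proof is a Cantor-scheme construction analogous to Lemma \ref{superkey0}, but with the branching at level $n$ governed by a tuple $\bar q_n \in \mathfrak Q$ rather than a simple binary split, and with substantially more bookkeeping to guarantee $\mathbf S$-continuity of the image measure. I will inductively choose $\mathbf q = (\bar q_n)_{n\geq 1}$ and, for each $n\geq 0$ and each admissible word $\omega^{(n)} = (\omega_1,\dots,\omega_n)$ with $\omega_k \in \Omega(\bar q_k)$, a point $(x_{\omega^{(n)}}, \lambda_{\omega^{(n)}}) \in \mathbf Z$ together with open neighborhoods $U_{\omega^{(n)}} \ni x_{\omega^{(n)}}$ in $X$ and $V_{\omega^{(n)}} \ni \lambda_{\omega^{(n)}}$ in $\TT$. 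These will be nested in $\omega^{(n)}$, with $\overline{V_{\omega^{(n-1)},\omega_n}}$ pairwise disjoint as $\omega_n$ varies in $\Omega(\bar q_n)$. Then $E(\omega)$ and $\phi(\omega)$ will arise as the unique points of $\bigcap_n U_{\omega^{(n)}}$ and $\bigcap_n V_{\omega^{(n)}}$; disjointness of the $V$-neighborhoods will make $\phi$ a homeomorphic embedding, and starting with $U_\emptyset$ of diameter $<\varepsilon$ around $x_0$ will yield (1).

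For the super-Lipschitz condition (2), I will impose the diameter bound
\[
\mathrm{diam}(U_{\omega^{(n-1)}}) \;\leq\; \frac{C}{[w(\bar q_1)\cdots w(\bar q_{n-1})]^{2}\,\sqrt{l_n}\,(\max_{s} q_{n,s})^{2}}
\]
only \emph{after} $\bar q_n$ has been chosen, retroactively shrinking $U_{\omega^{(n-1)}}$ if needed. Then for $\omega,\omega' \in \Omega(\mathbf q)$ first differing at level $n$ with $\omega_n,\omega'_n \in \Omega(q_{n,s})$, both $E(\omega)$ and $E(\omega')$ lie in $U_{\omega^{(n-1)}}$, so that $\|E(\omega)-E(\omega')\| \leq C\,d_{\mathbf q}(\omega,\omega')^2$.

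The crux is (3). At the inductive step, the projection $\Lambda_{\omega^{(n-1)}} := \{\lambda \in V_{\omega^{(n-1)}} : \exists x, (x,\lambda)\in (U_{\omega^{(n-1)}}\times V_{\omega^{(n-1)}})\cap\mathbf Z\}$ is, by the hypothesis on $\mathbf Z$, not $\mathbf S$-small. Because $\mathbf S$ is a norm-closed translation-invariant ideal, $\mathcal F_+^{-1}(\mathbf S)$ is hereditary with respect to absolute continuity (Lemma \ref{hereditary}), and Lemma \ref{Bperfect} then yields an $\mathbf S$-continuous probability measure $\mu_{\omega^{(n-1)}}$ supported in $\overline{\Lambda_{\omega^{(n-1)}}}$ charging $\Lambda_{\omega^{(n-1)}}$. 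Using the $w^*$-continuity of the semi-norms $(\Phi_k)_{k\geq 0}$ certifying the $c_0$-like character of $\mathbf S$, each $\mu_{\omega^{(n-1)}}$ is approximated by a finitely supported, rational-weight probability measure $\tilde\mu_{\omega^{(n-1)}}$ on $\Lambda_{\omega^{(n-1)}}$, so closely in $w^*$ that $\Phi_k(\widehat{\mu_{\omega^{(n-1)}}-\tilde\mu_{\omega^{(n-1)}}}) < \eta_n$ for all $k\leq K_n$, with prescribed $\eta_n\searrow 0$ and $K_n\nearrow\infty$. A single tuple $\bar q_n$, common to all nodes at level $n-1$, is then chosen so that the available weights $\frac{1}{l_n q_{n,s}}$ collectively realize the rational weights of every $\tilde\mu_{\omega^{(n-1)}}$; the atoms of each $\tilde\mu_{\omega^{(n-1)}}$ determine the children $\lambda_{\omega^{(n-1)},\omega_n}$, and one picks $(x_{\omega^{(n)}},\lambda_{\omega^{(n)}})\in\mathbf Z$ nearby, surrounded by $U_{\omega^{(n)}}, V_{\omega^{(n)}}$ honoring the nesting, diameter, and disjointness conditions. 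To verify $\mathbf S$-continuity of $\sigma = \phi_* m_{\mathbf q}$, decompose
\[
\sigma \;=\; \sum_{\omega^{(n-1)}} m_{\mathbf q}([\omega^{(n-1)}])\,\sigma_{\omega^{(n-1)}},
\]
where $\sigma_{\omega^{(n-1)}}$ is the conditional push-forward on $V_{\omega^{(n-1)}}$. The shrinkage of $V$-neighborhoods at level $n$ makes $\sigma_{\omega^{(n-1)}}$ $w^*$-close to $\tilde\mu_{\omega^{(n-1)}}$, hence to $\mu_{\omega^{(n-1)}}$ in the $\Phi_k$ with $k\leq K_n$. Convex combinations of $\mathbf S$-continuous measures remain in $\mathcal F_+^{-1}(\mathbf S)$ since it is a linear subspace, and since $K_n \to \infty$, this yields $\Phi_k(\hat\sigma|_{\ZZ_+}) \to 0$, whence $\sigma$ is $\mathbf S$-continuous.

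\textbf{Main obstacle.} The hardest part is the simultaneous control of three competing constraints at each inductive step: the rigid rational-weight structure imposed by a single $\bar q_n$ used uniformly across all level-$(n-1)$ nodes (needed to approximate every $\mu_{\omega^{(n-1)}}$); the severe diameter shrinkage on $U_{\omega^{(n)}}$ required for the super-Lipschitz estimate, which becomes more restrictive as $\bar q_{n+1}$ grows; and the Fourier-theoretic control, via the $\Phi_k$, needed for $\mathbf S$-continuity. Larger $\bar q_n$ makes the measure-theoretic approximation easier but tightens the subsequent geometric constraints, so the bulk of the technical work will lie in balancing these opposing pressures.
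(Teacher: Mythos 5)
Your overall architecture (Cantor scheme on $\Omega(\mathbf q)$ with nested neighborhoods, the super-Lipschitz estimate obtained by bounding the diameter of the common parent $U_{\omega^{(n-1)}}$ after $\bar q_n$ is known, and the use of $w^*$-continuity of the $\Phi_k$ together with discrete rational-weight approximations) matches the paper's, and your treatment of (1) and (2) is fine. The gap is in (3): your argument that $\sigma=\phi_*m_{\mathbf q}$ is $\mathbf S$-continuous does not close. At stage $n$ you control $\Phi_k(\widehat\sigma-\widehat{\mu^{(n)}})$ only for $k\leq K_n$, where $\mu^{(n)}=\sum_{\omega^{(n-1)}}m_{\mathbf q}([\omega^{(n-1)}])\,\mu_{\omega^{(n-1)}}$. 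To deduce $\Phi_k(\widehat\sigma)\to 0$ you would also need $\Phi_k(\widehat{\mu^{(n)}})$ to be small on the very window of frequencies where stage $n$ is the stage you invoke, and this fails: each $\mu_{\omega^{(n-1)}}$ is a probability measure supported in a set of diameter $\delta_{n-1}:=\max\operatorname{diam}(V_{\omega^{(n-1)}})\to 0$, so (already in the Rajchman case) $\vert\widehat{\mu_{\omega^{(n-1)}}}(k)\vert\geq 1/2$ for all $k\lesssim 1/\delta_{n-1}$; its Fourier decay cannot begin before frequencies of order $1/\delta_{n-1}$. On the other hand the range $K_n$ on which $\sigma$ approximates $\mu^{(n)}$ is itself of order $\eta_n/\delta_{n}$ at best. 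Chasing the constants, the ``controlled'' windows of consecutive stages do not overlap: there is an interval of frequencies between stages $n$ and $n+1$ on which neither $\Phi_k(\widehat\sigma-\widehat{\mu^{(n)}})$ nor $\Phi_k(\widehat{\mu^{(n+1)}})$ is small, and nothing in your construction bounds $\Phi_k(\widehat\sigma)$ there. The sentence ``since $K_n\to\infty$, this yields $\Phi_k(\widehat\sigma)\to 0$'' is therefore a non sequitur: for a fixed large $k$ you must use a stage $n$ with $K_n\geq k$, and for that stage $\Phi_k(\widehat{\mu^{(n)}})$ need not be small.

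This missing step is exactly what the paper's Fact 2 supplies, and it is the real reason for the disjoint-union structure $\Omega(\bar q)=\bigsqcup_{s=1}^{l}\Omega(q_s)$ with weights $\frac1{l q_s}$, which your proposal treats as incidental bookkeeping. In the paper, the level-$(n+1)$ measure is an \emph{average} $\sigma_{n+1}=\frac1l\sum_{s=1}^{l}\sigma'_s$ of $l$ measures constructed sequentially at staggered Fourier scales $N_0<N_1<\dots<N_l$: for every fixed $k$, the summands with $s\leq s(k)$ are small because $k$ exceeds their decay threshold, the summands with $s>s(k)+1$ are controlled by the low-frequency approximation, and the single remaining summand contributes at most $2/l$. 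This produces a bound $\sup_{k\geq 0}\Phi_k(\widehat\sigma_{n+1}-\widehat\sigma_n)\leq 2^{-(n+1)}$ that is \emph{uniform in $k$}, which telescopes to $\sup_k\Phi_k(\widehat\sigma-\widehat\sigma_n)\leq 2^{-n}$ and, combined with the $\mathbf S$-continuity of each $\sigma_n$, gives $\limsup_k\Phi_k(\widehat\sigma)\leq 2^{-n}$ for every $n$. Without an averaging device of this kind (or some substitute producing a uniform-in-$k$ estimate on successive differences), the single-tuple-per-level scheme you describe cannot be completed.
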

\begin{proof} Throughout the proof, we denote (as usual) by $\mathcal M(\TT)$ the space of all complex measures on $\TT$ endowed with the total variation norm $\Vert\hskip 0.6mm\cdot\hskip 0.6mm\Vert_{}$. We recall that $\mathcal M(\TT)$ is the dual space of $\mathcal C(\TT)$ (the space of all continuous complex-valued functions on $\TT$), so we can use the $w^*$ topology on $\mathcal M(\TT)$. Since our family $\mathbf S$ is $c_0$-like, we may fix a uniformly bounded sequence of $w^*$-$\,$continuous semi-norms 
$(\Phi_k)_{k\geq 0}$ on $\ell^\infty (\ZZ_+)$ such that 
$$\mathbf S=\left\{ a\in\ell^\infty (\ZZ_+);\; \Phi_k(a)\xrightarrow{k\to\infty} 0\right\}\, .$$
Without loss of generality, we may assume that $\Phi_k(a)\leq \Vert a\Vert_\infty$ for all $k$ and every $a\in\ell^\infty (\ZZ_+)$. Moreover, upon replacing $\Phi_k(a)$ by $\max(\vert a_k\vert, \Phi_k(a))$, we may also assume that $\Phi_k(a)\geq \vert a_k\vert$ 
for every $a\in\ell^\infty(\ZZ_+)$.

Finally, to avoid notational heaviness, we denote by $\widehat\sigma$ the \emph{positive part} of the Fourier transform of a measure 
$\sigma\in\mathcal M(\TT)$, \mbox{i.e.} we write $\widehat\sigma$ instead of $\widehat\sigma_{\vert \ZZ_+}$ or $\mathcal F_+(\sigma)$. We then have 
$$\vert\widehat\sigma(k)\vert\leq \Phi_k(\widehat\sigma)\leq \Vert\widehat\sigma\Vert_\infty\leq \Vert \sigma\Vert$$
for all $k$ and every $\sigma\in\mathcal M(\TT)$. In particular, if a bounded sequence $(\sigma_n)\subset\mathcal M(\TT)$ is such that the sequence $(\widehat\sigma_n)$ is Cauchy with respect to 
each semi-norm $\Phi_k$, then $(\sigma_n)$ is $w^*$ convergent in $\mathcal M(\TT)$.

\medskip
Let us introduce some terminology. By an \emph{admissible sequence of open sets in $\TT$}, we shall mean a finite sequence of open sets $(V_i)_{i\in I}\subset\TT$ such  that the $V_i$ have pairwise disjoint closures. An admissible 
sequence $(W_j)_{j\in J}$ is \emph{finer} than an admissible sequence $(V_i)_{i\in I}$ if the following hold: 
\begin{itemize}
\item for every $j\in J$, one can find $i\in I$ such that 
$\ov{W_j}\subset V_i$; then $j$ is called a \emph{successor} of $i$;
\item all $i\in I$ have the same number of successors $j\in J$. 
\end{itemize}
In this situation, we write $i\prec j$ when $j\in J$ is a successor of $i\in I$. We define in the same way admissible sequences of open sets $(U_i)_{i\in I}$ in $X$ and the corresponding refinement relation.

An \emph{admissible pair} is a pair $(\sigma, (V_i)_{i\in I})$ where $(V_i)_{i\in I}$ is an admissible sequence of open sets in $\TT$ and $\sigma$ is a positive $\mathbf S$-continuous measure such that 
\begin{itemize}
\item $\supp(\sigma)\subset\bigcup_{i\in I} V_i$;
\item  all open sets $V_i$ have the same $\sigma$-measure.
\end{itemize}
An admissible pair $(\sigma, (V_i)_{i\in I})$ is \emph{normalized} if $\sigma$ is a probability measure.

\medskip
\begin{fact1} Let $(\sigma, (V_i)_{i\in I})$ be a normalized admissible pair. Given $N\in\NN$ and $\eta >0$, one can find two admissible sequences of open sets 
$(V_j')_{j\in J}$ and $(W_j)_{j\in J}$ (with the same index set $J$) and an $\mathbf S$-continuous probability measure 
$\nu$ such that
\begin{itemize}
\item $(V_j')_{j\in J}$ is finer than $(V_i)_{i\in I}$ and $\diam(V_j')<\eta$ for all $j\in J$;
\item $(\nu, (W_j)_{j\in J})$ is an admissible pair finer than $(\sigma, (V_i)_{i\in I})$;
\item $\supp(\nu)\subset\supp(\sigma)$;
\item $\supp(\sigma)\cap V_j'\neq\emptyset$ for all $j\in J$;
\item ${\bigcup_j \ov{V_j'}}\cap{\bigcup_j\ov{W_j}}=\emptyset$;
\item $\Vert\nu-\sigma\Vert<\eta$;
\item whenever $\sigma'$ is a probability measure such that $(\sigma', (V'_j)_{j\in J})$ is an admissible pair, it follows that 
$\Phi_k(\widehat{\sigma}'-\widehat\nu)<\eta$ for all $k\leq N$.
\end{itemize}
\end{fact1}
\begin{proof}[Proof of Fact 1] Let $\eta'$ and $\eta''$ be small positive numbers to be chosen later.

First, we note that since the measure $\sigma$ is \emph{continuous} by assumption on $\mathbf S$, 
one can partition $\supp(\sigma)\cap V_i$, $i\in I$ into Borel sets $A_{i,1},\dots ,A_{i, K_i}$ with $\diam(A_{i,s})<\eta'$ and $\sigma(A_{i,1})=\dots =\sigma(A_{i,K_i})$. This can be done as follows. Split $\supp(\sigma)\cap V_i$ into finitely many Borel sets 
$B_1,\dots, B_N$ with diameters less than $\eta'/2$ and positive $\sigma$-measure. Using the continuity of $\sigma$, choose a Borel set $B'_1\subset B_1$ such that $\sigma (B'_1)>0$ and $\sigma (B'_1)/\sigma (V_i)$ is a \emph{rational} number. Then choose 
a Borel set $B'_2$ such that $B_1\setminus B'_1\subset B'_2\subset (B_1\setminus B'_1)\cup B_2$ and $\sigma (B'_2)/\sigma (V_i)$ is a positive rational number, and so on. This gives a partition of $\supp(\sigma)\cap V_i$ into pairwise disjoint Borel sets 
$B'_1,\dots ,B'_N$ with diameter less than $\eta'$ such that $\sigma (B'_k)/\sigma (V_i)$ is a positive rational number for each $k$, say $\sigma (B'_k)=\frac{p_k}{q}\, \sigma (V_i)$ where $p_k, q\in\NN$ (the same $q$ for all $k$). Finally, use again the continuity of 
$\sigma$ to split each $B'_k$ into $p_k$ Borel sets $B''_{k,1},\dots ,B''_{k,p_k}$ with $\sigma (B''_{k,s})=\frac{\sigma(V_i)}{q}$, and relabel the collection of all these sets $B''_{k,s}$ as $A_{i,1},\dots ,A_{i, K_i}$. That all the splittings can indeed be made follows from the 
($1$-dimensional case of the) classical \emph{Liapounoff convexity theorem}; see \mbox{e.g.} \cite[Theorem 5.5]{Rudin}. 

Next, given any positive number $m_i$, one can partition further 
each $A_{i,s}$ into $m_i$ Borel sets $B$ with the same $\sigma$-measure. Taking $m_i:=\prod_{j\neq i} K_j$, we then have the same number of Borel sets $B$ inside each open set $V_i$. Thus, we may in fact assume from the beginning 
that we have the same number of sets $A_{i,s}$ inside each $V_i$. We denote this number by $K$, and we put $J:=I\times\{ 1,\dots ,K\}$. We note that $\sigma(A_{i,s})=\frac{1}{\vert J\vert}$ for all $(i,s)\in J$.

Since the measure $\sigma$ is regular and continuous, one can pick, for each $(i,s)\in J$, a compact set $C_{i,s}\subset A_{i,s}$ such that $0<\sigma(A_{i,s}\setminus C_{i,s})<\eta''$, and then a point $a_{i,s}\in (A_{i,s}\setminus C_{i,s})\cap\supp(\sigma)$. 
Then we may choose open sets $W_{i,s}\supset C_{i,s}$ with pairwise disjoint 
closures contained in $V_i$, and open sets $V_{i,s}'\ni a_{i,s}$ with pairwise disjoint closures contained in $V_i$ and $\diam(V'_{i,s})<\eta'$, in such a way that ${\bigcup_{j\in J} \ov{V_j'}}\cap{\bigcup_{j\in J}\ov{W_j}}=\emptyset$.

If $\eta''$ is small enough, then the probability measure 
$$\nu =\frac 1{\vert J\vert}\, \sum_{(i,s)\in J}\frac{\sigma_{\vert C_{i,s}}}{\sigma (C_{i,s})}$$
satisfies $\Vert\sigma-\nu\Vert <\eta$. Moreover, $(\nu ,(W_j)_{j\in J})$ is an admissible pair finer than $(\sigma, (V_i)_{i\in I})$.

Let us denote by $\omega_f$ the modulus of continuity of a function $f\in\mathcal C(\TT)$:
$$\omega_f(\delta)=\sup\{ \vert f(u)-f(v)\vert;\; \vert u-v\vert<\delta\}\, .$$
 Since the sets $C_{i,s}$, $(i,s)\in J$ form a partition of $\supp(\nu)$ with $\nu(C_{i,s})=\frac{1}{\vert J\vert}$, and since $\vert z-a_{i,s}\vert\leq \diam(A_i{,s})<\eta'$ for all $z\in C_{i,s}$, we have 
$$\left\vert\int_\TT f\, d\nu-\frac{1}{\vert J\vert}\sum_{(i,s)\in J} f(a_{i,s})\right\vert\leq \omega_f(\eta')$$
for any $f\in\mathcal C(\TT)$. Similarly, if $\sigma'$ is any probability measure with $\supp(\sigma')\subset\bigcup_{j\in J} V'_j$ and $\sigma'(V_j')=\frac{1}{\vert J\vert}$ for all $j\in J$, then 
$$\left\vert\int_\TT f\, d\sigma'-\frac{1}{\vert J\vert}\sum_{(i,s)\in J} f(a_{i,s})\right\vert\leq \omega_f(\eta')\, .$$
Hence, we see that $\sigma'$ is close to $\nu$ in the $w^*$ topology of $\mathcal M(\TT)$ if $\eta'$ is sufficiently small.
Since each semi-norm $\Phi_k$ is $w^*$-$\,$continuous and since the Fourier transformation $\mathcal F_+:\mathcal M(\TT)\to\ell^\infty(\ZZ_+)$ is $(w^*,w^*)\,$-$\,$continuous on bounded sets, it follows that if $\eta'$ is small enough then, for any $\sigma'$ such that $(\sigma', (V_j')_{j\in J})$ is an admissible pair, we do have 
$\Phi_k(\widehat{\sigma}'-\widehat\nu)<\eta$ for all $k\leq N$.

\end{proof}

\begin{remark*} When considering two sequences of open sets $(V_j')_{j\in J}$ and $(W_j)_{j\in J}$ both finer than a given sequence $(V_i)_{i\in I}$ and with the same index set $J$, we will always assume 
that the corresponding ``extension" relations $\prec$ on $I\times J$ are in fact the same.
\end{remark*}

\smallskip
At this point, we need to introduce some more terminology. By a \emph{convenient triple}, we mean a triple $(\sigma, (U_i)_{i\in I}, (V_i)_{i\in I})$, where $(U_i)\subset X$ and $(V_i)\subset\TT$ are admissible sequences of open sets (with the same index set $I$), and $\sigma$ is a positive $\mathbf S$-continuous measure such that 
the pair $(\sigma , (V_i)_{i\in I})$ is admissible and the following holds: for each $i\in I$ one can find a closed set $F_i\subset X$ such that
$$F_i\subset U_i\setminus\{ 0\}\;\;{\rm and}\;\;\supp (\sigma)\subset\bigcup_{i\in I}\{ \lambda\in V_i;\; \exists x\in F_i\, :\, (x,\lambda)\in \mathbf Z\}\, .$$

There is a natural notion of refinement for convenient triples, which is defined exactly as for admissible pairs. Moreover, we shall say that two convenient triples $(\sigma, (U_i), (V_i))$ and $(\sigma', (U_{i'}), (V_{i'}))$ are \emph{disjoint} if $\bigcup_i \ov{U_i}\cap \bigcup_{i'} \ov{U_{i'}}=\emptyset$ and $\bigcup_i \ov{V_i}\cap \bigcup_{i'} \ov{V_{i'}}=\emptyset$. 

\smallskip
The following fact is the key point in order to prove Lemma \ref{superkey}: it is the basic inductive step towards the construction of the measure space $(\Omega(\mathbf q),m_{\mathbf q})$ and the map $\phi$. 
The technical difficulty comes essentially from condition (c), which is here to ensure (3) in Lemma \ref{superkey}.

\begin{fact2} Let $(\sigma, (U_i)_{i\in I}, (V_i)_{i\in I})$ be a normalized convenient triple, and let $\eta >0$. Then one can find a positive integer $l=l(\eta)$ and a finite sequence of pairwise disjoint normalized convenient triples $(\sigma'_1, (U_j')_{j\in J_1}, (V_j')_{j\in J_1}),
\dots,(\sigma'_l, (U_j')_{j\in J_l}, (V_j')_{j\in J_l})$ 
 finer than $(\sigma, (U_i),(V_i))$ such that 
\begin{itemize}
\item[\rm (a)] $\diam(V_j')<\eta$ for all $s\in\{ 1,\dots ,l\}$ and every $j\in J_s$;
\item[\rm (b)] $\diam(U_j')<\frac{\eta}{l(\eta)^{1/2}\vert J_s\vert^2}$ for all $s$ and every $j\in J_s$;
\item[\rm (c)] If we put 
$$\sigma':=\frac{1}{l}\sum_{s=1}^l \sigma'_s $$
then $\sup_{k\geq 0}\,\Phi_k(\widehat{\sigma}'-\widehat\sigma)\leq\eta$.
\end{itemize}

\end{fact2}
\begin{proof}[Proof of Fact 2] Put $\nu_0=\sigma=\sigma'_0$, $J_0=I$ and $W_j=V_j$, $j\in J_0$. 
Let also $\eta^*$ be a positive number to be chosen later but depending only on $\eta$, and let $l$ be the smallest integer such that $l>1/\eta^*$.

Since $\nu_0$ is $\mathbf S$-continuous, one can find $N_0\in\NN$ such that $\Phi_k(\widehat{\nu}_0)<\eta^*$ for all $k>N_0$. 

Applying Fact 1 to the pair $(\sigma, (V_i)_{i\in I})=(\nu_0, (V_j)_{j\in J_0})$, we find an admissible sequence of open sets $(V'_j)_{j\in J_1}$ and a normalized admissible pair $(\nu_1, (W_j)_{j\in J_1})$ such that
\begin{itemize}
\item $(V_j')_{j\in J_1}$ is finer than $(W_j)_{j\in J_{0}}$ and $\diam(V_j')<\eta^*$ for all $j\in J_1$;
\item $(\nu_1, (W_{j})_{j\in J_1})$ is finer than $(\nu_0, (W_{j})_{j\in J_0})$;
\item $\supp(\nu_1)\subset\supp(\nu_0)$;
\item $\supp(\nu_0)\cap V_{j}'\neq\emptyset$ for all $j\in J_1$;
\item ${\bigcup_{j\in J_1} \ov{V'_{j}}}\cap{\bigcup_{j\in J_1}\ov{W_{j}}}=\emptyset$;
\item $\Vert\nu_1-\nu_{0}\Vert<\eta^*/l$;
\item whenever $\sigma'$ is a probability measure such that $(\sigma', (V'_j)_{j\in J_1})$ is an admissible pair, it follows that 
$\Phi_k(\widehat{\sigma}'-\widehat{\nu}_1)<\eta^*$ for all $k\leq N_{0}$.
\end{itemize}

Each $j\in J_1$ has a unique ``predecessor" $i\in J_0$. In what follows, we denote this predecessor by $j^-$.

Since $V'_j\cap\supp(\nu_0)\neq\emptyset$ and since $(\nu_0, (U_i)_{i\in J_0}, (V_i)_{i\in J_0})$ is a convenient 
triple, one can pick, for each $j\in J_1$, a point $\lambda_j\in V'_j\cap\supp(\nu_0)$ and a point $x_j\in U'_{j^-}$ such that $(x_j,\lambda_j)\in \mathbf Z$. The points $x_j$ are pairwise distinct because $x_j\neq 0$ and 
$T(x_j)=\lambda_j x_j$.

Using again the fact that $(\nu_0, (U_i)_{i\in J_0}, (V_i)_{i\in J_0})$ is a convenient 
triple, one can find closed sets $F_{i,0}\subset X$ with $F_{i,0}\subset U_i\setminus\{ 0\}$ such that 
$$ \supp (\nu_0)\subset\bigcup_{i\in I}\{ \lambda\in V_i;\; \exists x\in F_{i,0}\, :\, (x,\lambda)\in \mathbf Z\}\, .$$

Since $\mathbf Z$ is closed in $(X\setminus\{ 0\})\times \TT$, the set 
$$F_{j}=\left\{ x\in F_{j^-, 0};\,\exists \lambda\in \supp(\nu_1)\cap \ov{W_j}\, :\, (x,\lambda)\in \mathbf Z\right\}$$
is closed in $X$, for each $j\in J_1$. Moreover, the sets $F_{j}$ are pairwise disjoint and we have $\bigcup_{j\in J_1}F_{j}\cap\{ x_j;\; j\in J_1\}=\emptyset$, because $\bigcup_{j\in J_1}\ov{V_j'}\cap\bigcup_{j\in J_1} \ov{W_j}=\emptyset$. 
It follows that one can find 
open sets $U_j\subset X$, $j\in J_1$ with pairwise disjoint closures and  open sets $U_{j}'\subset X$ with pairwise disjoint closures, such that
\begin{itemize}
\item  $\diam(U_j')<\frac{\eta^*}{l^{1/2}\vert J_1\vert^2}$;
\item $\ov{U_{j}}\subset U_{j^-}$ and $\ov{U'_j}\subset U_{j^-}$;
\item $x_j\in U'_j$ and $F_{j}\subset U_{j}$;
\item $\bigcup_{j\in J_1} \ov{U_{j}}\cap\bigcup_{j\in J_1}\ov{U'_{j}}=\emptyset$.
\end{itemize}

We note that $(\nu_1, (U_{j})_{j\in J_1}, (W_j)_{j\in J_1})$ is a convenient triple, by the very definition of the closed sets $F_{j}$.

Now, we use the assumption on $\mathbf Z$:  since $\mathbf Z\cap (U_j'\times V'_j)\neq\emptyset$, the set $$A_j=\{ \lambda\in V'_j;\;\exists x\in U_j'\,:\, (x,\lambda)\in \mathbf Z\}$$ is not $\mathbf S$-small, for any $j\in J_1$. Moreover, $A_j$ is an {analytic set} because $Z$ is a Borel subset of $X\times\TT$; in particular, $A_j$ is universally measurable (see \cite{K}), and hence it contains a \emph{compact} set which is not $\mathbf S$-small. Since the family of $\mathbf S$-continuous measures is hereditary with respect to absolute continuity, it follows that one can find, for each $j\in J_1$, an $\mathbf S$-continuous probability measure 
$\widetilde\sigma_j$ such that 
$$\supp(\widetilde\sigma_j)\subset \{ \lambda\in V'_j;\;\exists x\in U_j'\,:\, (x,\lambda)\in \mathbf Z\}\, .$$
Moreover, since $U'_j\setminus\{ 0\}$ is an $F_\sigma$ set in $X$, we may in fact assume that there is a closed set $F_j'\subset X$ with $F_j'\subset U_j'\setminus\{ 0\}$ such that 
$$\supp(\widetilde\sigma_j)\subset \{ \lambda\in V'_j;\;\exists x\in F_j'\,:\, (x,\lambda)\in \mathbf Z\}\, .$$

If we put 
$$\sigma'_1:=\frac{1}{\vert J_1\vert}\,\sum_{j\in J_1} \widetilde\sigma_j\, ,$$
it follows that $(\sigma'_1, (U_j')_{j\in J_1}, (V_j')_{j\in J_1})$ is a convenient triple. In particular, the pair $(\sigma'_1, (V_j)_{j\in J_1})$ is admissible, so that 
$\Phi_k(\widehat\sigma'_1-\widehat{\nu}_1)<\eta^*$ for all $k\leq N_0$.

\smallskip
Let us summarize what we have done up to now. Starting with the convenient triple $(\sigma, (U_i)_{i\in I}, (V_i)_{i\in I})=(\nu_0, (U_j)_{j\in J_0}, (W_j)_{j\in J_0})$ and a positive integer $N_0$ such that 
$\Phi_k(\widehat\nu_0)<\eta^*$ for all $k>N_0$, we have found two convenient triples $(\nu_1, (U_{j})_{j\in J_1}, (W_j)_{j\in J_1})$ and $(\sigma_1', (U'_j)_{j\in J_1}, (V'_j)_{j\in J_1})$ both finer than  
$(\sigma, (U_i), (V_i))=(\nu_0, (U_j)_{j\in J_0}, (W_j)_{j\in J_0})$, such that
\begin{itemize}
\item $\bigcup_j \ov{V'_j}\cap\bigcup_j \ov{W_j}=\emptyset$;
\item $\Vert\nu_1-\nu_0\Vert<\eta^*/l$;
\item $\Phi_k(\widehat\sigma'_1-\widehat\nu_1)<\eta^*$ for all $k\leq N_0$.
\end{itemize}

Now, since $\nu_1$ and $\sigma'_1$ are $\mathbf S$-continuous, we can choose $N_1>N_0$ such that $\Phi_k(\widehat{\nu}_1 )<\eta^*$ and $\Phi_k(\widehat{\sigma}'_1)<\eta^*$ for all $k>N_1$, and we repeat the whole procedure with 
$(\nu_1, (U_{j})_{j\in J_1}, (W_j)_{j\in J_1})$ in place of $(\nu_0, (U_j)_{j\in J_0}, (W_j)_{j\in J_0})$. This produces two new normalized convenient triples $(\nu_2, (U_{j})_{j\in J_2}, (W_j)_{j\in J_2})$ and $(\sigma_2', (U'_j)_{j\in J_2}, (V'_j)_{j\in J_2})$. Then
we start again with $\nu_2$ and a positive integer $N_2>N_1$ witnessing that $\nu_2$ and $\sigma'_2$ are $\mathbf S$-continuous, and so on.

After $l$ steps, we will have constructed positive integers $N_0<N_1<\dots <N_l$ and, for each $s\in\{1,\dots ,l\}$, two normalized convenient triples $(\nu_2, (U_{j})_{j\in J_s}, (W_j)_{j\in J_s})$ and $(\sigma_s', (U'_j)_{j\in J_s}, (V'_j)_{j\in J_s})$, such that the following properties hold:
\begin{itemize}
\item both triples $(\sigma'_s, (U'_j)_{j\in J_s}, (V_j')_{j\in J_s})$  and $(\nu_s, (U_j)_{j\in J_s}, (W_j)_{i\in J_{s}})$ are finer than $(\nu_{s-1}, (U_j)_{j\in J_{s-1}}, (W_j)_{i\in J_{s-1}})$;
\item $\diam (U'_j)<\frac{\eta^*}{l^{1/2}\vert J_s\vert^2}$ and $\diam(V_j')<\eta^*$;
\item ${\bigcup_{j\in J_s} \ov{V'_{j}}}\cap{\bigcup_{j\in J_s}\ov{W_{j}}}=\emptyset$;
\item $\Vert\nu_l-\nu_{l-1}\Vert<\eta^*/l$;
\item $\Phi_k(\widehat\sigma'_s-\widehat\nu_{l})<\eta^*$ for all $k\leq N_{s-1}$;
\item $\Phi_k(\widehat\nu_s)<\eta^*$ and $\Phi_k(\widehat\sigma'_s)<\eta^*$ for all $k>N_s$.
\end{itemize}

Then (a) and (b) hold, and we have to check (c). That is, we have to show that the measure
$$\sigma'=\frac{1}{l}\sum_{s=1}^l \sigma'_s\, ,$$
satisfies $\Phi_k( \widehat{\sigma}'-\widehat\sigma)\leq\eta$ for all $k\geq 0$ if $\eta^*$ is small enough (depending on $\eta$ only) and $l>1/\eta^*$.

First, we note that $\Vert{\nu_s}-\sigma\Vert=\Vert\nu_s-\nu_0\Vert<s\eta^*/l\leq \eta^*$ for every $s\in\{ 1,\dots ,l\}$. Since  $\Phi_k(\widehat{\nu}_s-\widehat\sigma)\leq \Vert{\nu_s}-\sigma\Vert$, it follows that 
$$\Phi_k(\widehat{\sigma}'-\widehat\sigma)\leq \eta^*+\frac{1}{l}\,\sum_{s=1}^l\Phi_k(\widehat{\sigma}'_s-\widehat{\nu}_s)$$
for all $k\geq 0$.

If $k\leq N_0$ then $\Phi_k(\widehat\sigma'_s-\widehat\nu_s)<\veps$ for every $s\in\{ 1,\dots, l\}$, and hence 
$$\Phi_k(\widehat\sigma'-\widehat\sigma)\leq 2\eta^*\, .$$ 

If $k>N_0$, let us denote by $s(k)$ the largest $s\in\{ 0,\dots ,l\}$ such that 
$k>N_s$. Then $\Phi_k(\widehat\sigma'_s-\widehat\nu_s)<\eta^*$ if $s>s(k)+1$ because $k\leq N_{s(k)+1}\leq N_{s-1}$; and 
$\Phi_k(\widehat\sigma'_s-\widehat\nu_s)\leq\Phi_k(\widehat\sigma'_s) +\Phi_k(\widehat\nu_s)<2\eta^*$ if $s\leq s(k)$, because $k >N_s$. 
So we get 
\begin{eqnarray*}
\Phi_k(\widehat\sigma'_s-\widehat\sigma)&\leq&\eta^*+ \frac{1}{l} \left(\sum_{s\leq s(k)} 2\eta^*+\Vert \widehat\sigma'_{s(k)+1}-\widehat\nu_{s(k)+1}\Vert_\infty+\sum_{s>s(k)+1}\eta^*\right)\\
&\leq&4\eta^*+\frac{2}{l}\, \cdot
\end{eqnarray*}

Taking $\eta^*=\eta/6$, this concludes the proof of Fact 2.
\end{proof}

We can now start the actual proof of Lemma \ref{superkey}.

\medskip
Recall that $\mathfrak Q$ is the set of all finite sequences of integers $\bar q=(q_1,\dots ,q_l)$ with $q_s\geq 2$ for all $s$.  We denote by $\mathfrak Q^{<\NN}$ the set of all finite sequences 
$\mathbf s=(\bar q_1,\dots ,\bar q_n)$ with $\bar q_j\in\mathfrak Q$, plus the empty sequence $\emptyset$. We denote 
by $\vert \mathbf s\vert$ the length of a sequence $\mathbf s\in \mathfrak Q^{<\omega}$ (the empty sequence $\emptyset$ has length $0$), and by 
$\prec$ the natural extension ordering on $\mathfrak Q^{<\omega}$.

 If $\mathbf s=(\bar q_1,\dots ,\bar q_n)\in\mathfrak Q^{<\NN}$ we put $$\Omega (\mathbf s):=\prod_{j=1}^n \Omega( \bar q_j)\, ,$$
 and we denote by $m_{\mathbf s}$ the product measure $\otimes_{j=1}^n m_{\bar q_j}$. We also put $\Omega(\emptyset)=\{\emptyset\}$, and $m_\emptyset =\delta_{\emptyset}$.

\medskip
Let us fix $(x_0,\lambda_0)\in \mathbf Z$ and $\veps >0$. Put $V_\emptyset:=\TT$, and choose an open set $U_\emptyset\subset X\setminus\{ 0\}$ such that $\diam(U_\emptyset)<\veps$
 and $(x_0,\lambda_0)\in U_\emptyset\times V_\emptyset$. Let also pick an open set $U'_\emptyset\subset X$ such that $x_\emptyset\in U'_\emptyset$ and 
$\ov{U'_\emptyset}\subset U_\emptyset$. By assumption, one can find an $\mathbf S$-continuous probability measure $\sigma_0$ such that 
$\supp(\sigma_0)\subset\{ \lambda\in V_\emptyset;\;\exists x\in U'_\emptyset\, :\, (x,\lambda)\in Z\}\, .$ Then $(\sigma_\emptyset, U_\emptyset, V_\emptyset)$ is a convenient triple. 

We construct by induction a sequence $(\mathbf s_n)_{n\geq 0}\subset \mathfrak Q^{<\omega}$, a sequence of probability $\mathbf S$-continuous measures 
$(\sigma_n)_{n\geq 0}$ and, for each $n\geq 0$, two sequences of open sets 
$(U_\xi)_{\xi\in\Omega (\mathbf s_n)}\subset X$ and $(V_\xi)_{\xi\in\Omega(\mathbf s_n)}\subset\TT$. If $n\geq 1$, we write $\mathbf s_n=(\bar q_1,\dots ,\bar q_n)$  and 
$\bar q_j=(q_{j,1},\dots ,q_{j, l_j}\}$. If $\xi=(\xi_1,\dots ,\xi_{n-1})\in\Omega(\mathbf s_{n-1})$ and $\tau\in\Omega (\bar q_n)$, we denote by $\xi\tau$ the sequence $(\xi_1,\dots ,\xi_{n-1},\tau)\in\Omega (\mathbf s_n)$ (if $n=1$ then $\xi\tau=\emptyset\tau=\tau$). Finally, we put $\veps_0=1$ and 
$$\veps_n:=\frac{1}{w(\bar q_1)\cdots w(\bar q_n)}$$ if $n\geq 1$. The following requirements have to be fulfilled for all $n\geq 0$.

\smallskip
\begin{enumerate}[(i)]
\item $\vert \mathbf s_n\vert=n$, and $\mathbf s_{n-1}\prec \mathbf s_n$ if $n\geq 1$.

\item If $n\geq 1$ and $\xi\in\Omega(\mathbf s_{n-1})$, then 

\smallskip
\begin{itemize}
\item $\ov{U_{\xi\tau}}\subset U_\xi$ and $\ov{V_{\xi\tau}}\subset V_\xi$ for every $\tau\in\Omega (\bar q_n)$;
\item $\ov{U_{\xi\tau}}\cap \ov{U_{\xi\tau'}}=\emptyset$ and $\ov{V_{\xi\tau}}\cap\ov{V_{\xi\tau}}=\emptyset$ if $\tau\neq \tau'$;
\item $\diam(U_{\xi\tau})\leq\frac 12\diam (U_\xi)$ and $\diam(V_{\xi\tau})\leq\frac 12\diam (V_\xi)$.
\end{itemize}

\smallskip
\item If $n\geq 1$ and $\xi\in\Omega(\mathbf s_{n-1})$, then $$\diam (U_{\xi\tau})<\frac{\veps_{n-1}}{l_n^{1/2} \, q_{n,s}^2}$$ for all $s\in\{ 1,\dots ,l_{n}\}$ and every $\tau\in\Omega(q_{n,s})$.
\item One can find closed sets $F_\xi\subset X$, $\xi\in\Omega (\mathbf s_n)$ with $F_\xi\subset U_\xi\setminus\{ 0\}$ and such that $\supp(\sigma_n)\subset \bigcup_{\xi\in\Omega (\mathbf s_n)} \{\lambda\in V_\xi;\;\exists x\in F_\xi\, :\, (x,\lambda)\in \mathbf Z\}$. In particular:
$$\supp(\sigma_n)\subset \bigcup_{\xi\in\Omega (\mathbf s_n)} \{\lambda\in V_\xi;\;\exists x\in U_\xi\, :\, (x,\lambda)\in \mathbf Z\}\, .$$
\item If $i\leq n$ and $\xi\in\Omega (\mathbf s_i)$, then $\sigma_n(V_\xi)=m_{\mathbf s_i}(\{ \xi\}).$
\item If $n\geq 1$ then $\sup_{k\geq 0}\,\Phi_k(\widehat\sigma_{n}-\widehat\sigma_{n-1})<2^{-n}$.
\end{enumerate}

\smallskip
Since $(\sigma_0, U_\emptyset, V_\emptyset)$ is a convenient triple, condition (iv) is satisfied for $n=0$; and the other conditions are (trivially) satisfied as well.

Applying Fact 2 with the convenient triple $(\sigma_0, U_\emptyset, V_\emptyset)$ and a small enough $\eta>0$, we get a positive integer $l$ and 
a finite sequence of pairwise disjoint convenient triples $(\sigma'_1, (U_j')_{j\in J_1}, (V_j')_{j\in J_1}),
\dots,$ $(\sigma'_l, (U_j')_{j\in J_l}, (V_j')_{j\in J_l})$ 
 finer than $(\sigma_0, U_\emptyset, V_\emptyset)$
 such that 
 \begin{itemize}
 \item $\diam(U_j')<\frac 12\diam(U_\emptyset)$ and $\diam(V'_j)<\frac 12\diam(V_\emptyset)$ for all $s\in\{ 1,\dots ,l\}$ and every $j\in J_s$;
\item $\diam(U_j')<\frac{\veps_0}{l^{1/2}\vert J_s\vert^2}$ for all $s$ and every $j\in J_s$;
\item $\sup_{k\geq 0}\,\Phi_k(\widehat{\sigma}'-\widehat\sigma_0)\leq 2^{-1}$, where $\sigma'=\frac{1}{l}\sum_{s=1}^l \sigma'_s\, . $
\end{itemize}

 If we put $q_{1,s}:=\vert J_s\vert$, $s\in\{ 1,\dots ,l\}$ and $\bar q_1=(q_{1,1},\dots ,q_{1,l})$, we may enumerate in the obvious way the open sets $U'_j$, $V'_j$ as 
 $U_{\xi}$, $V_\xi$, $\xi\in\Omega (\bar q_1)$. Then (i),$\,\dots$,(vi) are clearly satisfied for $n=1$ with $\sigma_1=\sigma'$.
 
 The general inductive step is very much the same. Assume that everything has been constructed up to some stage $n\geq 1$. Then, for every $\tilde\xi\in\Omega (\mathbf s_{n-1})$, the triple 
 $\mathcal T_{\tilde\xi}=((\sigma_n)_{\vert V_{\tilde\xi}}, (U_{\tilde\xi\tau})_{\tau\in\Omega (\bar q_n)}, (V_{\tilde\xi\tau})_{\tau\in\Omega (\bar q_n)})$ is a (non-normalized) convenient triple. Given $\eta>0$, it is not hard 
 to see (by examining the proof) that we can apply Fact 2 
 simultaneously to all triples $\mathcal T_{\tilde\xi}$, $\tilde\xi\in\Omega (\mathbf s_{n-1})$, with the same $l=l(\eta)$ and the same index set $J$. As above, we may take $J=\Omega(\bar q_n)\times \Omega (\bar q)$, for some $\bar q=(q_1,\dots ,q_l)$. This gives positive $\mathbf S$-continuous measures $\sigma_{\tilde\xi}$ and open sets $U_{\tilde\xi\tau'}$, $V_{\tilde\xi\tau'}$, $\tau'\in\Omega (\bar q_n)\times\Omega(\bar q)$. Then, if $\eta$ is small enough, conditions (i),$\,\dots$,(vi) will be met at stage $n+1$ with 
 $\mathbf s_{n-1}=\mathbf s_n\bar q$, $\sigma_{n+1}=\sum_{\tilde\xi}\sigma_{\tilde\xi}$ and the open sets $U_\xi,V_\xi$ for $\xi\in\Omega(\mathbf s_{n+1})=\Omega(\mathbf s_{n-1})\times\Omega(\bar q_n)\times \Omega (\bar q)$.

\medskip
Let us denote by $\mathbf q$ the ``limit" of the increasing sequence $(\mathbf s_n)$, \mbox{i.e.} the infinite sequence $(\bar q_n)_{n\geq 1}\in\mathfrak Q^\NN$.

 It follows from (ii)  that for any $\omega=(\omega_n)_{n\geq 1}\in\Omega (\mathbf q)$, the intersection $\bigcap_{n\geq 1} U_{\omega_{\vert n}}$ is a single point $\{ E(\omega )\}$, the intersection $\bigcap_{n\geq 1} V_{\omega_{\vert n}}$ 
is a single point $\{ \phi(\omega)\} $, and the maps $\phi :\Omega (\mathbf q)\to\TT$ and $E:\Omega (\mathbf q)\to X$ are homeomorphic embeddings. Moreover, condition (iii) says exactly that $E$ is super-Lipschitz.
And since $x_0\in U_\emptyset$ and $\diam (U_\emptyset)<\veps$, we have $\Vert E(\omega)-x_0\Vert<\veps$ for every $\omega\in \Omega(\mathbf q)$.

For each $\xi\in\bigcup_{n\geq 0}\Omega (\mathbf s_n)$, let us pick a point $\lambda_\xi\in V_\xi\cap\supp(\sigma_n)$, where $n$ is the length of $\xi$, \mbox{i.e.} $\xi\in\Omega (\mathbf s_n)$. By (iv), one can find $x_\xi\in U_\xi$ such that $(x_\xi,\lambda_\xi)\in \mathbf Z$. In particular, we have 
$T(x_\xi)=\lambda_\xi x_\xi$ for every $\xi\in\bigcup_{n\geq 0}\Omega (\mathbf s_n)$. Since $x_{\omega_{\vert n}}\to E(\omega)$ and $\lambda_{\omega_{\vert n}}\to\phi(\omega)$ as $n\to\infty$, it follows that $TE(\omega )=\phi(\omega)\, E(\omega)$ 
for every $\omega\in\Omega (\mathbf q)$. In other words, $(E,\phi)$ is a $\TT$-eigenfield for $T$.

By (vi), the sequence $(\sigma_n)$ converges $w^*$ to a probability $\mathbf S$-continuous measure $\sigma$. To conclude the proof, it remains to show that $\sigma$ is the image measure of the measure $m_{\mathbf q}$ on $\Omega(\mathbf q)$
under the embedding $\phi :\Omega (\mathbf q)\to\TT$. That is, we want to check that

$$\int_{\Omega (\mathbf q)} f\circ \phi (\omega)\, dm_{\mathbf q}(\omega)=\int_\TT f\, d\sigma$$
for any $f\in\mathcal C(\TT)$. Let us fix $f$.

For each $\xi\in\bigcup_{n\geq 0}\Omega (\mathbf s_n)$, let us (again) pick a point $\lambda_\xi\in V_\xi$. Then $\lambda_{\omega_{\vert n}}\to \phi(\omega)$ as $n\to\infty$, for every $\omega\in\Omega (\mathbf q)$. 
Setting $\Omega_\xi:=\{ \omega\in\Omega (\mathbf q);\; \xi\subset \omega\}$ and using Lebesgue's theorem, it follows that

\begin{eqnarray*} \int_{\Omega(\mathbf q)} f\circ\phi (\omega)\, dm_{\mathbf q}(\omega)&=&\lim_{n\to\infty} \sum_{\xi\in\Omega (\mathbf s_n)} f(\lambda_\xi)\times m_{\mathbf q} (\Omega_\xi)\\
&=&\lim_{n\to\infty}\,\sum_{\xi\in\Omega (\mathbf s_n)} f(\lambda_\xi)\times m_{\mathbf s_n} (\{\xi\})\, .
\end{eqnarray*}

On the other hand,  by the definition of $\sigma$ and since $\diam (V_\xi)\to 0$ as $\vert \xi\vert\to\infty$ we also have
\begin{eqnarray*} \int_\TT f\, d\sigma&=&\lim_{n\to\infty} \int_\TT f\, d\sigma_n\\
&=&\lim_{n\to\infty}\sum_{\xi\in\Omega (\mathbf s_n)} f(\lambda_\xi)\times \sigma_n(V_\xi)\, .
\end{eqnarray*}

By condition (v), this concludes the proof.

\end{proof}
\medskip
\subsection{The proof}\label{realproof} Assume that the $\TT$-eigenvectors are $\mathbf S$-perfectly spanning, and let us show that $T$ is $\mathbf S$-mixing in the Gaussian sense.
We recall that, since $\mathbf S$ is $c_0$-like, the $\mathbb T$-eigenvectors are in fact $\mathbf S$-perfectly spanning for analytic sets (see Remark 1 just after Corollary \ref{Smix=span}).

Using Lemma \ref{halfkey}, Lemma \ref{superkey} and proceeding exactly as in sub-section \ref{proofWM}, we find a sequence of $\TT$-eigenfields 
$(E_i,\phi_i)$ defined on some $\Omega (\mathbf{q}_i)$, such that
\begin{itemize}
\item[\rm (1)] each operator $K_{E_i} :L^2(\Omega ({\mathbf q_i}), m_{{\mathbf q_i}})\to X$ is gamma-radonifying;
\item[\rm (2)] each $E_i$ is continuous and $\overline{\rm span}\left(\bigcup_{i\in\NN} {\rm ran}\, (E_i)\right)=X$;
\item[\rm (3)] each $\phi_i$ is a homeomorphic embedding, and $\sigma_i=m_{{\mathbf q_i}}\circ\phi_i^{-1}$ is $\mathbf S$-continuous.
\end{itemize}

Put $(\Omega_i, m_i):=(\Omega(\mathbf q_i), m_{\mathbf q_i})$ and let $(\Omega, m)$ be the disjoint union of the measure spaces $(\Omega_i,m_i)$. 
Choose a sequence of small positive numbers $(\alpha_i)_{i\in\NN}$, and define a $\TT$-eigenfield 
$(E,\phi)$ on $(\Omega, m)$ as expected: $E(\omega_i)=\alpha_i E_i(\omega_i)$ and $\phi(\omega_i)=\phi_i(\omega_i)$ for each $i$ and every $\omega_i\in\Omega_i$. By (1), the operator $K_E:L^2(\Omega, m)\to X$ is gamma-radonifying 
if the $\alpha_i$ are small enough. By (2) and since $m$ has full support, the vector field $E$ is $m$-spanning and hence the operator $K_E$ has dense range. The intertwining equation 
$TK_E=K_EM_\phi$ holds by the definition of $E$. Finally, it follows from (3) that the measure $(f_im_{i})\circ\phi_i^{-1}$ is $\mathbf S$-continuous for each $i$ and every $f_i\in L^1(\Omega_i, m_i)$. Hence, the measure 
$\sigma_f=(fm)\circ\phi^{-1}$ is $\mathbf S$-continuous for every $f\in L^1(\Omega, m)$. 
By Proposition \ref{background}, this shows that $T$ is $\mathbf S$-mixing in the Gaussian sense.

\subsection{Fr\'echet spaces} The above proof can be reproduced almost word for word 
in the Fr\'echet space setting. More precisely, the following changes should be made.

\begin{itemize}
\item Modify the definition of ``super-Lipschitz": a map $E:\Omega(\mathbf q)\to X$ is super-Lipschitz if $E:\Omega(\mathbf q)\to (X,\Vert\hskip 0.6mm\cdot\hskip 0.6mm\Vert_{})$ is super-Lipschitz for any continuous semi-norm $\Vert\hskip 0.6mm\cdot\hskip 0.6mm\Vert$ on $X$.
\item In Lemma \ref{halfkey}, add ``for any continuous semi-norm $\Vert\hskip 0.6mm\cdot\hskip 0.6mm\Vert$ on $X$".
\item In Lemma \ref{superkey}, fix a nondecreasing sequence of semi-norms $(\Vert\hskip 0.6mm\cdot\hskip 0.6mm\Vert_i)_{i\in\NN}$ generating the topology of $X$, and put \mbox{e.g.}
$$\Vert x\Vert=\sum_{i\in\NN}2^{-i} \min(\Vert x\Vert_i, 1)\, .$$
(Of course this is not even a semi-norm, but the notation is convenient anyway). Then perform exactly the same construction.
\item Do the same when starting the proof of Theorem \ref{abstract}.
\end{itemize}

\section{proof of the abstract results (2)}\label{proofabstract2}

\subsection{Proof of theorem \ref{abstracteasy}} Let the Banach space $X$ have type 2.  
Let $T\in\mathfrak L(X)$ and assume that the $\TT$-eigenvectors of $T$ are $\mathbf S$-perfectly spanning for analytic sets. By Lemma \ref{hereditary} and Proposition \ref{perfect}, one can find a  countable family of {continuous} $\TT$-eigenvector fields $(E_i)_{i\in I}$ for $T$, where $E_i:\Lambda_i\to X$ is defined on some $\mathbf S$-perfect set $\Lambda_i\subset\TT$, such that 
${\rm span}\left(\bigcup_{i\in I} E_i(\Lambda_i)\right)$ is dense in $X$. By Lemma \ref{Bperfect}, each $\Lambda_i$ is the support of some $\mathbf S$-continuous probability measure $\sigma_i$. 
If we put $\phi_i(\lambda)=\lambda$, this gives a continuous $\TT$-eigenfield $(E_i,\phi_i)$ for $T$ on the measure space $(\Lambda_i,\sigma_i)$ such that the image measure $\sigma_i\circ \phi_i^{-1}=\sigma_i$ is $\mathbf S$-continuous; and by Proposition \ref{typecotype}, the operator $K_{E_i}$ is gamma-radonifying because $X$ has type 2. So the proof can be completed exactly as in sub-section \ref{realproof} above.

\subsection{Proof of Proposition \ref{converse}}  Let the Banach space $X$ have cotype 2, assume that $T\in\mathfrak L(X)$ is $\mathbf S$-mixing with respect to some Gaussian measure $\mu$ with full support, and let us show that the $\TT$-eigenvectors of $T$ are perfectly spanning for analytic sets. 

\smallskip
We start with the following fact, which follows rather easily from Lemma \ref{back1} (1) (see the beginning of the proof of Theorem 5.46 in \cite{BM}).

\begin{fact*} One can find a gamma-radonifying operator $K:\mathcal H\to X$ such that $\mu=\mu_K$ and a \emph{unitary} operator $M=\mathcal H\to\mathcal H$ such that $TK=KM$.
\end{fact*}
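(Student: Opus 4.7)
The plan is to produce $K$ and $M$ by starting from \emph{any} representation $\mu=\mu_{K_{0}}$ provided by the definition of a Gaussian measure, and then enlarging the Hilbert space in order to unitarize the companion operator supplied by Lemma \ref{back1}(1). First I would arrange that $K_{0}$ is injective: replacing $\mathcal{H}_{0}$ by $\mathcal{H}_{0}\ominus\ker(K_{0})$ and $K_{0}$ by its restriction does not change the Fourier transform $\widehat{\mu_{K_{0}}}(x^{*})=\exp(-\tfrac{1}{4}\|K_{0}^{*}x^{*}\|^{2})$, because $K_{0}^{*}X^{*}\subset (\ker K_{0})^{\perp}$, and the restricted operator is still gamma-radonifying. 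With $K_{0}$ injective, Lemma \ref{back1}(1) applied to the $T$-invariance of $\mu$ produces $M_{0}\in\mathfrak{L}(\mathcal{H}_{0})$ with $TK_{0}=K_{0}M_{0}$ and such that $V:=M_{0}^{*}$ is an isometry on all of $\mathcal{H}_{0}$.

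The remaining task is thus to dilate the isometry $V$ to a unitary. Let $\mathcal{L}:=\mathcal{H}_{0}\ominus V\mathcal{H}_{0}$ be the defect space, set
$$\mathcal{H}:=\mathcal{H}_{0}\oplus\ell^{2}(\mathbb{N},\mathcal{L}),$$
and define $U:\mathcal{H}\to\mathcal{H}$ by
$$U\bigl(h,\,(l_{n})_{n\geq 0}\bigr)=\bigl(Vh+l_{0},\,(l_{n+1})_{n\geq 0}\bigr).$$
Using the orthogonal decomposition $\mathcal{H}_{0}=V\mathcal{H}_{0}\oplus\mathcal{L}$, a direct calculation shows that $U$ is isometric, and surjectivity follows by uniquely writing any $h'\in\mathcal{H}_{0}$ as $Vh+l$ with $h\in\mathcal{H}_0$ and $l\in\mathcal L$. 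Hence $U$ is unitary. Set $M:=U^{*}$, and extend $K_{0}$ by zero: $K(h,v):=K_{0}h$ for $h\in\mathcal{H}_{0}$ and $v\in\ell^{2}(\mathbb{N},\mathcal{L})$.

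I then need to check three things. (a) \emph{$K$ is gamma-radonifying and $\mu_{K}=\mu$}: choose an orthonormal basis of $\mathcal{H}$ obtained by concatenating an orthonormal basis $(e_{i})$ of $\mathcal{H}_{0}$ with one of $\ell^{2}(\mathbb{N},\mathcal{L})$; the Gaussian series associated with $K$ reduces to $\sum g_{i}K_{0}e_{i}$, which converges almost surely and has distribution $\mu_{K_{0}}=\mu$. (b) \emph{$M$ is unitary}: automatic since $U$ is. (c) \emph{$TK=KM$}: writing out $\langle M(h,0),(h',(l'_{n}))\rangle_{\mathcal H}=\langle (h,0),U(h',(l'_{n}))\rangle_{\mathcal H}$ one finds $M(h,0)=(M_{0}h,\,P_{\mathcal{L}}h,\,0,0,\ldots)$ for $h\in\mathcal{H}_{0}$, so $KM(h,0)=K_{0}M_{0}h=TK_{0}h=TK(h,0)$; and an analogous computation shows $M(0,(l_{n}))=(0,(0,l_{0},l_{1},\ldots))$, which lies in $\{0\}\oplus\ell^{2}(\mathbb{N},\mathcal{L})$ where both $TK$ and $KM$ vanish.

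The only nontrivial step is the unitarization of $V$, which is the standard minimal unitary extension of an isometry. The key bookkeeping point --- and what I expect to require the most care in a full write-up --- is that the enlargement is harmless: because $K$ is extended by zero on the auxiliary summand $\ell^{2}(\mathbb{N},\mathcal{L})$, neither the measure $\mu_{K}$ nor the intertwining relation with $T$ is disturbed by the passage from $\mathcal{H}_{0}$ to $\mathcal{H}$.
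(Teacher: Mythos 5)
Your argument is correct and follows essentially the same route as the paper, which proves this Fact by invoking Lemma \ref{back1}(1) and the standard unitary extension of an isometry (deferring the details to the beginning of the proof of Theorem 5.46 in \cite{BM}). Your reduction to an injective $K_0$, the dilation of $V=M_0^*$ on $\mathcal H_0\oplus\ell^2(\mathbb{N},\mathcal L)$, and the extension of $K_0$ by zero on the auxiliary summand are exactly the expected bookkeeping, and all three verifications (a)--(c) check out.
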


\smallskip
By the spectral theorem, we may assume that $\mathcal H=L^2(\Omega ,m)$ for some measure space $(\Omega, m)$ and that $M$ is a multiplication operator, $M=M_\phi$ for some measurable 
map $\phi :\Omega\to\TT$. Then we use the cotype 2 assumption on $X$: by Proposition \ref{typecotype}, the gamma-radonifying operator $K:L^2(\Omega, m)\to X$ is in fact given by a vector field; that is, 
$K=K_E$ for some vector field $E:(\Omega,m)\to X$. 

Now, let $D\subset \TT$ be an analytic $\mathbf S$-small set. By the remark just after Lemma \ref{back1} and by Lemma \ref{back2} (with $\mathcal H_1=\mathcal H\ominus\ker(K_E)$), we know that $\mathbf 1_{\{ \phi\in D\}}h\in\ker (K_E)$ 
for any $h\in L^2(\Omega, m)$. In other words, we have 
$$\int_{\{\phi\in D\}} h(\omega) E(\omega)\, dm(\omega)=0$$
for every $h\in L^2(\Omega, m)$. It follows at once that $E$ is almost everywhere $0$ on the set 
$\{ \phi\in D\}$; and since $E$ is $m$-spanning (because $\mu=\mu_{K_E}$ has full support), this implies that the linear span of $\{ E(\omega);\; \omega\in \Omega\setminus\phi^{-1}(D)\}$ is dense in $X$. By the very definition 
of a $\TT$-eigenfield, it follows that the linear span of $\bigcup_{\lambda\in\TT\setminus D} \ker(T-\lambda)$ is dense in $X$, and we conclude that the $\TT$-eigenvectors of $T$ are $\mathbf S$-perfectly spanning.


\section{Miscellaneous remarks}\label{final}

\subsection{Examples} 
Theorem \ref{WS} is a theoretical statement. However, it is extremly useful to give concrete examples of mixing operators, including new ones (\mbox{e.g.} the backward shift operators on $c_0$, or the translation semigroups in sub-section \ref{Semigroups} below). In this sub-section, we review several examples that were already known to be mixing in the Gaussian sense, either by an application of the results of \cite{BG3}, \cite{BG2} or \cite{BM}, or by using an ad-hoc argument. What we want to point out is that Theorem \ref{WS} now makes the proofs completely straightforward.  In all cases, Theorem \ref{WS} is used through the following immediate consequence.

\begin{proposition}\label{mainbis} 
Let $T$ be an operator acting on a complex separable Fr\'echet space $X$. Assume that one can find a $\TT$-eigenfield $(E,\phi)$ for $T$ on some topological measure space $(\Omega, m)$ such that the measure $m$ has full support, $E$ is continuous with $\overline{\rm span}\, E(\Omega)=X$, and $(fm)\circ\phi^{-1}$ is a Rajchman measure for every $f\in L^1(m)$. Then $T$ is strongly mixing in 
the Gaussian sense.
\end{proposition}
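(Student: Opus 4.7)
The plan is to deduce Proposition \ref{mainbis} directly from part (2) of Theorem \ref{WS}: under the stated hypotheses I will verify that the $\TT$-eigenvectors of $T$ are $\mathcal U_0$-perfectly spanning, after which the strong mixing conclusion follows immediately. Note in particular that I deliberately avoid trying to invoke Proposition \ref{background} directly, because its gamma-radonifying hypothesis (a) is \emph{not} assumed in the proposition at hand.

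Fix an arbitrary Borel set of extended uniqueness $D\subset\TT$. Since $(E,\phi)$ is a $\TT$-eigenfield, every vector $E(\omega)$ lies in $\ker(T-\phi(\omega))$, and consequently
$$\{E(\omega):\omega\in\Omega\setminus\phi^{-1}(D)\}\subset\bigcup_{\lambda\in\TT\setminus D}\ker(T-\lambda).$$
It therefore suffices to show that the closed linear span of the left-hand side is all of $X$.

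Suppose towards a contradiction that it is not. By Hahn--Banach there exists a nonzero $x^*\in X^*$ such that $\langle x^*,E(\omega)\rangle=0$ for every $\omega\in\Omega\setminus\phi^{-1}(D)$. Set $\psi(\omega):=\langle x^*,E(\omega)\rangle$. Then $\psi$ is continuous on $\Omega$, vanishes outside $\phi^{-1}(D)$, and lies in $L^2(\Omega,m)$ since $E$ is a vector field, so in particular $f:=|\psi|^2$ belongs to $L^1(\Omega,m)$. By hypothesis, the push-forward $\nu:=(fm)\circ\phi^{-1}$ is then a Rajchman measure on $\TT$. The crucial point is that $\nu$ is \emph{positive}, and since $\psi$ is zero outside $\phi^{-1}(D)$ we have $\nu(\TT\setminus D)=0$. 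On the other hand, $D$ being a Borel set of extended uniqueness, every positive Rajchman measure vanishes on $D$, so $\nu(D)=0$ as well. Hence $\nu\equiv 0$, meaning $\int_\Omega|\psi|^2\,dm=0$, so $\psi=0$ $m$-a.e. Because $\psi$ is continuous and $m$ has full support, this forces $\psi\equiv 0$ on $\Omega$; but then $\langle x^*,E(\omega)\rangle=0$ for all $\omega\in\Omega$, contradicting both $x^*\ne 0$ and $\overline{\operatorname{span}}\,E(\Omega)=X$.

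The proof is essentially frictionless: there is no real obstacle, only the small trick of pairing $E$ against the Hahn--Banach functional and then taking the modulus squared, which turns the $L^1$-hypothesis on push-forwards into a \emph{positive} Rajchman measure to which the defining property of $\mathcal U_0$-sets can be applied. Everything else is an immediate consequence of continuity of $\psi$ together with the fact that $m$ has full support.
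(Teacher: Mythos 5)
Your proof is correct and follows essentially the same route as the paper's: both reduce the statement to verifying that the $\TT$-eigenvectors are $\mathcal U_0$-perfectly spanning (and then invoke Theorem \ref{WS}(2)), with the key input in each case being that a positive Rajchman measure of the form $(fm)\circ\phi^{-1}$ annihilates every Borel set of extended uniqueness. The only cosmetic difference is that the paper argues directly that $m(\phi^{-1}(D))=0$ and concludes by density of $\Omega\setminus\phi^{-1}(D)$ together with continuity of $E$, whereas you package the same fact as a Hahn--Banach contradiction with $f=|\langle x^*,E(\cdot)\rangle|^2$.
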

\begin{proof} If $D\subset\TT$ is a Borel set of extended uniqueness, then $m(\phi^{-1}(D))=0$ because $(fm)\circ\phi^{-1}(D)=0$ for every $f\in L^1(m)$. Since $m$ has full support, it follows that $\Omega\setminus\phi^{-1}(D)$ is dense in $\Omega$ and hence that 
the linear span of $E(\Omega\setminus\phi^{-1}(D))$ is dense in $X$ because $E$ is assumed to be continuous. Since $E(\Omega\setminus \phi^{-1}(D))\subset \bigcup_{\lambda\in\TT\setminus D}\ker(T-\lambda)$, this shows that the $\TT$-eigenvectors of $T$ 
are $\mathcal U_0$-perfectly spanning.
\end{proof}

\begin{remark*} When the measure $m$ is finite,
the third condition in Proposition \ref{mainbis} just means that $m\circ\phi^{-1}$ is a Rajchman measure. 
\end{remark*}

\smallskip
\subsubsection{Weighted shifts again} Weighted backward shifts on $X_p=\ell^p(\NN)$, $1\leq p<\infty$ 
or $X_\infty=c_0(\NN)$ have already been considered in the introduction (Example 1): a weighted shift $B_{\mathbf w}$ is strongly mixing in the Gaussian sense as soon as the sequence $\left(\frac{1}{w_0\cdots w_n}\right)_{n\geq 0}$ is in $X_p$, due to the existence of a a continuous $\TT$-eigenvector field $E:\TT\to X_p$ such that $\overline{\rm span}\, E(\TT)=X_p$, namely $$E(\lambda):=\sum_{n=0}^\infty \frac{\lambda^n}{w_0\cdots w_n}\, e_n\, .$$

Here, we just would like to point out one curious fact. If $p<\infty$, the operator $K_E:L^2(\TT)\to X_p$ turns out to be gamma-radonifying (see \mbox{e.g.} \cite{BM}) and hence there is a very natural explicit ergodic Gaussian measure for $B_{\mathbf w}$, namely  
the distribution of the random variable $\xi=\sum_0^\infty \frac{g_n}{w_0\cdots w_n}\, e_n$. As shown in \cite[Example 3.13]{BG2}, this need not be true when $p=\infty$, \mbox{i.e.} $X=c_0$; that is, the weight sequence can be chosen in such a way that $K_E$ is 
not gamma-radonifying. So there is no ``obvious"  Gaussian measure in this case.

\subsubsection{Composition operators} Let $\alpha:\DD\to\DD$ be an automorphism of the unit disk $\DD$ without fixed points in $\DD$, and let $C_\alpha$ be the associated composition operator acting on $X=H^p(\DD)$, $1\leq p<\infty$:
$$C_\alpha (f)=f\circ\alpha\, .$$ 
As shown in \cite{BG3},  there is a natural continuous $\TT$-eigenfield $(E,\phi)$ for $C_\alpha$, defined on $\Omega=\RR_+$ in the parabolic case and $\Omega=[0,2\pi)$ in the hyperbolic case (endowed with Lebesgue measure $m$) such that $\ov{\rm span}\, E(\Omega)= X$ and the image measure $m\circ\phi^{-1}$ 
is absolutely continuous with respect to Lebesgue measure on $\TT$. By Proposition \ref{mainbis}, it follows that $C_\alpha$ is strongly mixing in the Gaussian sense. This was shown in \cite{BG2}, with a longer proof because the authors had to play 
with the regularity of the vector field $E$ and the geometry of the space $X$ to show that $K_E$ is gamma-radonifying; but the proof of \cite{BG2} is also more informative since it provides an explicit Gaussian measure for $C_\alpha$.

\subsubsection{Operators with analytic $\TT$-eigenvector fields} The following consequence of Proposition \ref{mainbis} is worth stating explicitely.
\begin{lemma} Let $X$ be a complex separable Fr\'echet space, and let $T\in\mathfrak L(X)$. Assume that $T$ is not a scalar multiple of the identity and that one can find a map $E:U\to X$ defined on some connected open set $U\subset \CC$ such that 
$E$ is holomorphic or ``anti-holomorphic" (\mbox{i.e.} $E(\bar s)$ is holomorphic), each $E(s)$ is an eigenvector for $T$, the associated eigenvalue has modulus 1 for at least one $s\in U$ and $\overline{\rm span}\, E(U)=X$. Then $T$ is 
strongly mixing in the Gaussian sense.
\end{lemma}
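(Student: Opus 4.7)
The plan is to invoke Corollary \ref{eigvectfield}(ii) with a single continuous $\TT$-eigenvector field defined on a nontrivial closed arc $J\subset\TT$ and having dense linear span in $X$. I shall carry out the argument in the holomorphic case; the anti-holomorphic case reduces to it by passing to $F(t):=E(\bar t)$, which is holomorphic on the conjugate domain $\bar U$ with the same range as $E$ and an eigenvalue function obtained by conjugation.

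First, since each $E(s)$ is a nonzero eigenvector, there is a well-defined eigenvalue $\lambda(s)\in\CC$ with $TE(s)=\lambda(s)E(s)$. The local formula
\[
\lambda(s)=\frac{\langle x^*,TE(s)\rangle}{\langle x^*,E(s)\rangle},
\]
valid on a neighborhood of any $s_0\in U$ once $x^*\in X^*$ is chosen with $\langle x^*,E(s_0)\rangle\neq 0$, shows that $\lambda:U\to\CC$ is holomorphic. If $\lambda$ were constant with value $c$, then $T$ would act as $cI$ on $E(U)$ and hence on $\overline{\rm span}\,E(U)=X$, contradicting the assumption that $T$ is not a scalar multiple of the identity. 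So $\lambda$ is non-constant; by the open mapping theorem it is an open map and its critical set $\{s:\lambda'(s)=0\}$ is closed and discrete in $U$.

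Second, I want to produce $s_0\in U$ with $|\lambda(s_0)|=1$ and $\lambda'(s_0)\neq 0$. By hypothesis there is $s^*\in U$ with $|\lambda(s^*)|=1$. If $\lambda'(s^*)\neq 0$, take $s_0:=s^*$. Otherwise write locally $\lambda(s)-\lambda(s^*)=(s-s^*)^k h(s)$ with $h(s^*)\neq 0$ and $k\geq 2$: the level set $\{|\lambda|=1\}$ near $s^*$ is then the union of $k$ real-analytic arcs meeting at $s^*$, along each of which $\lambda'$ vanishes only at $s^*$ itself, so a suitable nearby $s_0$ exists. The inverse function theorem then supplies a holomorphic local inverse $\gamma$ of $\lambda$ on a neighborhood $W\subset\CC$ of $\lambda(s_0)$; I choose a nontrivial closed arc $J\subset W\cap\TT$ through $\lambda(s_0)$ and set
\[
\tilde E(\mu):=E(\gamma(\mu)),\qquad \mu\in J.
\]
Then $\tilde E:J\to X$ is continuous and $T\tilde E(\mu)=\lambda(\gamma(\mu))E(\gamma(\mu))=\mu\tilde E(\mu)$, so $\tilde E$ is a continuous $\TT$-eigenvector field for $T$ on $J$.

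Finally, I claim that $\overline{\rm span}\,\tilde E(J)=X$. Indeed, if $x^*\in X^*$ annihilates $\tilde E(J)$, then the scalar-valued holomorphic function $f(s):=\langle x^*,E(s)\rangle$ vanishes on the real-analytic arc $\gamma(J)\subset U$, whose points accumulate in the connected open set $U$; the identity principle yields $f\equiv 0$ on $U$, and since $\overline{\rm span}\,E(U)=X$ this forces $x^*=0$. Because every nontrivial closed arc of $\TT$ is $\mathcal U_0$-perfect (as recorded just before Corollary \ref{eigvectfield}), Corollary \ref{eigvectfield}(ii) applied to $\tilde E$ concludes that $T$ is strongly mixing in the Gaussian sense. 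The only step requiring thought is the second one: if the given level-one point $s^*$ is a critical point of $\lambda$, one must use the local branching picture of a holomorphic map to slide to a nearby non-critical point on $\{|\lambda|=1\}$; after that, the identity-principle argument for density and the appeal to Corollary \ref{eigvectfield}(ii) are entirely routine.
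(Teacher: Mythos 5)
Your proof is correct and follows essentially the same route as the paper's: show the eigenvalue function is holomorphic and non-constant, use the open mapping/inverse function theorem to extract a nontrivial arc of $\TT$ over which one has a continuous eigenvector field, and get density of its span from the identity principle plus Hahn--Banach, concluding via the $\mathcal U_0$-perfectness of arcs. The only differences are cosmetic (you reparametrize by the arc and invoke Corollary \ref{eigvectfield}(ii), while the paper keeps the eigenfield on $\phi^{-1}(\Lambda)$ with the pushforward of Lebesgue measure and invokes Proposition \ref{mainbis}), and you are in fact more careful than the paper about the case where the given modulus-one point is a critical point of the eigenvalue function.
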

\begin{proof} Let us denote by $\phi(s)$ the eigenvalue associated with $E(s)$. Using the Hahn-Banach theorem, it is easily seen that $\phi :U\to\CC$ is holomorphic or anti-holomorphic. Moreover, $\phi$ is non-constant since $T$ is not scalar. 
By the inverse function theorem, it follows that $\phi (U)\cap \TT$ contains a non trivial arc $\Lambda$ such that  the restriction of $\phi$ to $\Omega:=\phi^{-1}(\Lambda)$ is a homeomorphism from $\Omega$ onto $\Lambda$. By the Hahn-Banach theorem and the identity principle for holomorphic (or anti-holomorphic) functions, the linear span of $E(\Omega)$ is dense in $X$. Hence, if we denote by $m$ the image of the Lebesgue measure on $\Lambda$ by $\phi^{-1}$, the $\TT$-eigenfield 
$(E,\phi)$ restricted to $\Omega$ satisfies the assumptions of Proposition \ref{mainbis}.

One could also have used Theorem \ref{WS} directly. Indeed, if $D\subset\TT$ is a Borel $\mathcal U_0$-set, then $\phi^{-1}(\TT\setminus D)$ is an uncountable Borel set (because $\phi(U)$ contains a nontrivial arc by the open mapping theorem), and as such 
it contains an uncountable compact set $K$. Then $\overline{\rm{span}}\, E(K)=X$ by the identity principle, and since $E(K)\subset\bigcup_{\lambda\in\TT\setminus D}\ker(T-\lambda)$, it follows  
that the $\TT$-eigenvectors of $T$ are $\mathcal U_0$-perfectly spanning.
\end{proof}

\smallskip
This lemma may be applied, for example, in the following two cases.
\begin{enumerate}
\item[$\bullet$] $T=M_\phi^*$, where $M_\phi$ is a (non-scalar) multiplication operator on some reproducing kernel Hilbert space of analytic functions on a connected open set $U\subset\CC$, and $\phi(U)\cap\TT\neq\emptyset$.
\item[$\bullet$] $T$ is a (non-scalar) operator on the space of entire functions $H(\CC)$ commuting with all translation operators.
\end{enumerate}

\smallskip
In the first case one may take 
$E(s):=k_s$, the reproducing kernel at $s\in U$ (which depends anti-holomorphically on $s$ and satisfies $M_\phi^*(k_s)=\overline{\phi (s)}\, k_s$).  In the second case the assumptions of the lemma are satisfied with $E(s)=e_s$, where $e_s(z)=e^{sz}$ (denoting by $\tau_z$ the operator of translation by $z$, use the relation $\tau_ze_s=e_s(z)e_s$ to show that $Te_s=(Te_s(0))\times e_s$).

\subsection{Non Gaussian measures} It is natural to ask whether one gets a more general notion of mixing for an operator $T\in\mathfrak L(X)$ by requiring only that $T$ should be mixing with respect to \emph{some} probability measure 
$\mu$ on $X$ with full support. The following remark (essentially contained in \cite{R}, and also in \cite{BG3}) provides a partial answer.

\begin{remark} Let $X$ be a separable Banach space, and assume that $X$ has \emph{type $2$}. Let also $\mu$ be a centred Borel probability measure on $X$ such that 
$\int_X\Vert x\Vert^2d\mu (x)<\infty$. Then there is a unique Gaussian measure $\nu$ on $X$ such that 
$$\Vert x^*\Vert_{L^2(\nu)}=\Vert x^*\Vert_{L^2(\mu)}$$ for every $x^*\in X^*$. Moreover, 
\begin{itemize}
\item[\rm (1)] if $\mu$ has full support then so does $\nu$;
\item[\rm (2)]  if $\mu$ is $T$-invariant for some $T\in\mathfrak L(X)$, then so is $\nu$;
\item[\rm (3)] if $T\in\mathfrak L(X)$ is weakly mixing or strongly mixing with respect to $\mu$, then the same is true with respect to $\nu$.
\end{itemize}
\end{remark}
\begin{proof} Put $\mathcal H:=L^2(\mu)$. 
By assumption on $\mu$, there is a well defined (conjugate-linear) ``inclusion" operator $J:X^*\to \mathcal H$, namely $J(x^*)=\overline{x^*}$ considered as an element of $L^2(\mu)$. Moreover, it follows from Lebesgue's theorem and the $w^*$-$\,$metrizability of $B_{X^*}$ that $J$ is 
$(w^*,w^*)\,$-$\,$continuous on $B_{X^*}$. Hence, $J$ is the adjoint of a bounded operator $K:\mathcal H\to X$. By definition, this means that $K^*x^*=\overline{x^*}$ considered as an element of $L^2(\mu)$, so we have in particular
$$\Vert K^*x^*\Vert_{\mathcal H}=\Vert x^*\Vert_{L^2(\mu)}\, .$$

It is fairly easy to show that the ``inclusion" operator $K^*=J:X^*\to L^2(\mu)$ is \emph{absolutely $2$-summing} (see \cite{AK} for the definition). Since $X$ has type 2, it follows that $K$ is gamma-radonifying (see \cite{CTV}, or \cite[Proposition 5.19]{BM}). If we denote by $\nu=\mu_K$ the associated Gaussian measure, then $\Vert x^*\Vert_{L^2(\nu)}=\Vert x^*\Vert_{L^2(\mu)}$ for every $x^*\in X^*$ by the very definition of $\nu$. Moreover, if $\nu'$ is another Gaussian measure with the same properties, then $\nu$ and $\nu'$ have 
the same Fourier transform and hence $\nu=\nu'$.

To prove (1), assume that $\mu$ has full support. Then the operator $K^*=J:X^*\to \mathcal H$ is one-to-one because a continuous function on $X$ is $0$ in $L^2(\mu)$ if and only if it is identically $0$. It follows that $K$ has dense range, and hence 
that $\nu=\mu_K$ has full support.

To prove (2), assume that $\mu$ is $T$-invariant. Then $\Vert T^*x^*\Vert_{L^2(\mu)}=\Vert x^*\Vert_{L^2(\mu)}$ and hence $\Vert K^*(T^*x^*)\Vert_{L^2(\nu)}=\Vert K^*(x^*)\Vert_{L^2(\nu)}$ for every $x^*\in X^*$. By the proof of Lemma \ref{back1}, this shows that $\nu$ is $T$-invariant

Finally, (3) follows from the following observations : (i) weak mixing or strong mixing of $T$ with respect to $\mu$ is characterized by a certain behaviour (B) of the sequence 
$(\langle f\circ T^n,g\rangle_{L^2(\mu)})_{n\geq 0}$, for any $f,g\in L^2_0(\mu)$; (ii) when specialized to linear functionals, this gives that the sequence $(\langle T^{*n}x^*,y^*\rangle_{L^2(\mu)})$ satisfies (B) for any $x,y^*\in X^*$; (iii) by the definition of $\nu$, this means that 
 $(\langle T^{*n}x^*,y^*\rangle_{L^2(\nu)})$ satisfies (B) for any $x^*,y^*$; (iv) since $\nu$ is a Gaussian measure, this is enough to ensure weak mixing or strong mixing of $T$ with respect to $\nu$, by Rudnicki \cite{R} or Bayart-Grivaux \cite{BG3}.
\end{proof}

When $X$ is  a Hilbert space, it follows from this remark (and from Theorem \ref{WS}) that an operator $T\in\mathfrak L(X)$ is \mbox{e.g.} strongly mixing with respect to some centred probability measure $\mu$ on $X$ with full support such that $\int_X\Vert x\Vert^2d\mu (x)<\infty$  if and only if the $T$-eigenvectors of $T$ are $\mathcal U_0$-perfectly spanning, in which case the measure $\mu$ can be assumed to be Gaussian. It would be interesting to know if this remains true without any a priori assumption on the measure $\mu$.

\subsection{A characterization of $\mathcal U_0$-sets}\label{bizarre} Our results easily yield the following curious characterization of sets of extended uniqueness.

\begin{remark} Let $D$ be an analytic subset of $\TT$. Then $D$ is a set of extended uniqueness if an only if the following holds: for every Hilbert space operator $T$ which is strongly mixing in the Gaussian sense, the linear span of 
$\bigcup_{\lambda\in\TT\setminus D}\ker(T-\lambda)$ is dense in the underlying Hilbert space.
\end{remark}
\begin{proof} The ``only if" part follows immediately from Proposition \ref{converse}. Conversely, assume that $D\not\in\mathcal U_0$. Then, since $D$ is universally measurable, 
it contains a compact set $\Lambda$ which is the support of some Rajchman probability measure, \mbox{i.e.} $\Lambda$ is {$\mathcal U_0$-perfect}. Let $T_\Lambda:H_\Lambda\to H_\Lambda$ be the Kalisch operator from Example 2 in the introduction. Then $T_\Lambda$ is strongly mixing in the Gaussian sense, but ${\rm span}\left(\bigcup_{\lambda\in\TT\setminus D}\ker(T-\lambda)\right)$ is certainly not dense in $H$ since it is $\{ 0\}$ (recall that $\sigma_p(T)=\Lambda$ and $\Lambda\subset D$).
\end{proof}

\subsection{The non-ergodicity index} Loosely speaking, Corollary \ref{characexistergod} is a kind of ``perfect set theorem" for ergodicity. This can be made precise as follows. Let $T\in\mathfrak L(X)$ (where $X$ is a Banach space with cotype 2), and consider the following ``derivation" on closed, $T$-invariant subspaces of $X$: for any such subspace $E$, set 
$$\mathcal D_T(E):=\bigcap_{D}\overline{\rm span}\, \bigcup_{\lambda\in\TT\setminus D}\ker (T_{\vert E}-\lambda)\, ,$$ where the intersection ranges over all countable sets $D\subset\TT$. By transfinite induction, one defines the 
iterates $\mathcal D_T^\alpha(X)$ for every ordinal $\alpha$, in the obvious way:
$$\displaylines{
\mathcal D_T^{\alpha+1}(X)=\mathcal D_T[\mathcal D_T^\alpha(X)]\, ,\cr
\mathcal D_T^\lambda(X)=\displaystyle\bigcap_{\alpha <\lambda} \mathcal D_T^\alpha(X)\;\;\;\;\hbox{($\lambda$ limit).}
}
$$

Since $X$ is a Polish space, the process must stabilize at some countable ordinal $\alpha(T)$ and hence $\mathcal D_T^\infty (X):=\bigcap_{\alpha} \mathcal D_T^\alpha (X)$ is well-defined 
and is a fixed point of $\mathcal D_T$ (in fact, the largest fixed point). Then, Corollary \ref{characexistergod} says that $T$ admits a nontrivial ergodic Gaussian measure iff $\mathcal D_T^\infty(X)\neq\{ 0\}$, in which case one can find an ergodic measure with support $\mathcal D_T^\infty(X)$. The subspace $\mathcal D_T^\infty (X)$ is the ``perfect kernel" associated with the derivation $\mathcal D_T$, a canonical witness of the ergodicity of $T$.

\smallskip
Let us say that $T$ is \emph{totally non-ergodic} if it does not admit any nontrivial ergodic Gaussian measure. Then the ordinal $\alpha(T)$ may be called the ``non-ergodicity index" of $T$. It is quite natural to wonder whether 
this index can be arbitrarily large: given any countable ordinal $\alpha$, is it possible to construct a totally non-ergodic operator $T$ with $\alpha (T)>\alpha$?

\subsection{The scope of the abstract results} Some comments are in order regarding the assumptions made on the family $\mathbf S$ in Theorems \ref{abstract} and \ref{abstracteasy}.

\medskip
The main trouble with Theorem \ref{abstract} is that we have no idea of how to prove it without assuming that the family $\mathbf S$ is $c_0$-like. Perhaps unexpectedly, what makes the definition of a $c_0$-like family very restrictive 
is the uniform boundedness assumption of the sequence of semi-norms $(\Phi_n)$. For example, any growth condition of the form $$a_n=o(\veps_n)\, ,$$
where $\bar\veps=(\veps_n)$ is a sequence of positive numbers tending to $0$ and satisfying 
$\veps_{n\pm k}\leq C_k \, \veps_n$, defines a translation-invariant ideal $\mathbf S_{\bar\veps}\subset\ell^\infty(\ZZ_+)$ which has the correct form \emph{except} for this uniform boundedness condition (just put $\Phi_n(a)=\frac1{\veps_n}\, \vert a_n\vert$). In fact,  
in this case one cannot hope for positive results of any kind: as observed by V. Devinck (\cite{Vincent}) it follows from a result of C. Badea and V. M\"uller (\cite{BadMull}) that $\mathbf S_{\bar\veps}\,$-mixing operators just do not exist at all (at least on a Hilbert space).

\smallskip
In the same spirit, it can be shown that it is impossible to find any $c_0$-like description for $\mathcal B=L^1(m)$, the family of all measures absolutely continuous with respect to Lebesgue measure on $\TT$  
 (see \cite{MZ}). One may also note that $\mathcal FL^1(m)$ is not an ideal of $\ell^\infty(\ZZ)$ (see \mbox{e.g.} \cite[Chapter 5, Proposition 6]{JPK}).

\smallskip
As a more extreme example, the well studied notion of \emph{mild mixing} (see \mbox{e.g.} \cite{Aa}) does not fit at all into the framework. Indeed, in this case the relevant family of measures $\mathcal B$ is the following: a measure $\sigma$ is in $\mathcal B$ iff it annihilates every \emph{weak Dirichlet set} (a Borel set $D\subset \TT$ is weak Dirichlet 
if, for any measure $\mu$ supported on $D$, one can find an increasing sequence of integer $(n_k)$ such that $z^{n_k}\to\mathbf 1$ in $L^2(\mu)$). By a result of S. Kahane (\cite{SK}), the family $\mathcal B$ is extremely complicated, namely \emph{non Borel} in $(\mathcal M(\TT),w^*)$. So there is absolutely no hope of finding a $c_0$-like description for $\mathcal B$.

\medskip
Now, if one is interested in Hilbert spaces only, Theorem \ref{abstracteasy} is arguably rather general since the $c_0$-like property is not required. However, that the family $\mathbf S$ should be \emph{norm-closed} in $\ell^\infty(\ZZ_+)$ is already quite a restrictive condition: indeed, this implies that the strongest mixing property that can be reached is, precisely, strong mixing. In particular, Theorem \ref{abstracteasy} cannot be applied to any ``summability" condition on the Fourier coefficients. On the other hand, the following example can be handled: let $\mathcal F$ be any translation-invariant filter on $\ZZ_+$, and take as $\mathbf S$ the family of all sequences $(a_n)\in\ell^\infty (\ZZ_+)$ tending to $0$ along $\mathcal F$. 

\smallskip
Perhaps more importantly, there are quite natural (norm-closed) families $\mathbf S$ which are not ideals of $\ell^\infty (\ZZ_+)$.  
The most irritating example is the ergodic case. As already observed a set $D\subset \TT$ is $\mathbf S_{\rm erg}$-small 
if and only if $D\subset\{ \mathbf 1\}$. Hence, if Theorem \ref{abstracteasy} could be applied  in this case, the result would read as follows: \emph{if $T\in\mathfrak L(X)$ and 
if ${\rm span}\left(\bigcup_{\lambda\in\TT\setminus\{\mathbf 1\}} \ker(T-\lambda)\right)$ is dense in $X$, then $T$ is ergodic in the Gaussian sense.} But this is clearly not true since for example $T=- id$ satisfies the assumption and is not even hypercyclic.

\smallskip
In spite of that, it might happen that an operator $T\in\mathfrak L(X)$ is ergodic in the Gaussian sense as soon as \emph{it is hypercyclic} and the $\TT$-eigenvectors span $X$; but the proof of such a result would require at least one new idea. This is, of course, related to the following well known open problem: is it true that every \emph{chaotic} operator (\mbox{i.e.} a hypercylic operator with a dense set of periodic points) is frequently hypercyclic? 

\subsection{Semigroups}\label{Semigroups} The proof of Theorem \ref{WS} can be easily adapted to get analogous results in the continuous case, \mbox{i.e.} for one-parameter semigroups of operators $(T_t)_{t\geq 0}$. Indeed, the mixing properties make perfect sense in the continuous case, and the sets of extended uniqueness are well defined on $\RR$ just like on any locally compact abelian group. The only ``difference" is that, according to the (point-)spectral mapping theorem for $C_0$-semigroups, the 
unimodular eigenvalues of the single operator $T$ from the discrete case should be replaced with the purely imaginary eigenvalues of the semigroup generator in the continuous case. Hence, the continuous analogue of Theorem \ref{WS} reads as follows.

\begin{theorem}\label{THMSG}
Let $X$ be a separable Banach space and let $\mathcal T=(T_t)_{t\geq 0}$ be a $C_0$-semigroup in $\mathfrak L(X)$ with infinitesimal generator $A$. 
\begin{enumerate}
\item[\rm (1)] If the linear span of $\bigcup_{\theta\in\RR\setminus D}\ker (A-i\theta)$ is dense in $X$ for any countable set $D\subset\RR$, then $\mathcal T$ is weakly mixing in the Gaussian sense.
\item[\rm (2)] If the linear span of $\bigcup_{\theta\in\RR\setminus D}\ker (A-i\theta)$ is dense in $X$ for any $\mathcal U_0$-set $D\subset\RR$, then $\mathcal T$ is  strongly mixing in the Gaussian sense.
\end{enumerate}
\end{theorem}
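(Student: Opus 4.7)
The plan is to transport the proof of Theorem \ref{WS}(1)--(2) to the continuous-time setting through three systematic substitutions: the circle $\TT$ becomes the line $\RR$, the iterated operator $T^n$ becomes the flow $T_t$, and the unimodular eigenvalues $\lambda\in\TT$ of $T$ become the imaginary eigenvalues $i\theta$ of the generator $A$ (recall that $Ax=i\theta x$ implies $T_tx=e^{it\theta}x$ for all $t\ge 0$). The spectral apparatus of Section~2 transports verbatim: to any pair $f,g\in L^2_0(\mu)$ one attaches a finite Borel measure $\sigma_{f,g}$ on $\RR$ through $\widehat\sigma_{f,g}(t)=\langle f\circ T_t,g\rangle_{L^2(\mu)}$ for $t\ge 0$ (with complex-conjugate extension to $t<0$), and by the $\RR$-versions of Wiener's and Rajchman's lemmas, $(T_t)$ is weakly (resp.\ strongly) mixing with respect to $\mu$ iff every $\sigma_{f,g}$ is continuous (resp.\ a Rajchman measure on $\RR$).

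Next I adapt the notion of a $\TT$-eigenfield: in the continuous setting an \emph{$i\RR$-eigenfield} for $(T_t)$ on $(\Omega,m)$ is a pair $(E,\phi)$ with $E:\Omega\to X$ a vector field in $L^2(\Omega,m,X)$ and $\phi:\Omega\to\RR$ measurable, satisfying $T_tE(\omega)=e^{it\phi(\omega)}E(\omega)$ for every $\omega$ and $t\ge 0$. The intertwining equation becomes $T_tK_E=K_EU_t$, where $(U_t)_{t\in\RR}$ is the unitary $C_0$-group on $L^2(\Omega,m)$ defined by $U_tf=e^{it\phi}f$. The analogues of Lemma \ref{back1}, Lemma \ref{back2} and Proposition \ref{background} then go through mutatis mutandis: $\mu_{K_E}$ is $(T_t)$-invariant, has full support when $E$ is $m$-spanning, and $(T_t)$ is weakly (resp.\ strongly) mixing with respect to $\mu_{K_E}$ as soon as $(fm)\circ\phi^{-1}$ is continuous (resp.\ Rajchman) on $\RR$ for every $f\in L^1(m)$.

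The construction of suitable eigenfields then proceeds exactly as in sections \ref{WMcase} and \ref{proofabstract1}. First, apply the exhaustion lemma (Lemma \ref{exhaust}) to the set $\mathbf V:=\{(x,\theta)\in X\times\RR;\; x\neq 0\ \text{and}\ Ax=i\theta x\}$ and to the family of continuous (resp.\ Rajchman) measures on $\RR$; this produces a closed $\mathbf Z\subset\mathbf V$ whose relatively open subsets project onto uncountable (resp.\ non-$\mathcal U_0$) subsets of $\RR$, while $\overline{\rm span}\,\pi_1(\mathbf Z)=X$. Then run the Cantor-type inductive constructions of sub-section \ref{bordel02} (for weak mixing) or of sub-section \ref{bordel2} (for strong mixing) literally unchanged, with the open sets in the ``frequency" coordinate now taken in $\RR$ rather than $\TT$. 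The super-Lipschitz estimates of Lemmas \ref{halfkey0} and \ref{halfkey} make no reference to the ambient group of eigenvalues, so the operators $K_{E_i}:L^2(\Omega_i,m_i)\to X$ produced on Cantor-like measure spaces $(\Omega_i,m_i)$ are gamma-radonifying; the image measures $m_i\circ\phi_i^{-1}$ are continuous in the weak mixing case (because $(\Omega_i,m_i)$ is atomless and $\phi_i$ is an embedding) and $\mathbf S$-continuous, i.e.\ Rajchman on $\RR$, in the strong mixing case (by the Fact~1 / Fact~2 arguments of sub-section \ref{bordel2}).

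Finally, glue the eigenfields together as in sub-section \ref{proofWM} (resp.\ sub-section \ref{realproof}), by taking the disjoint union of the $(\Omega_i,m_i)$ and setting $E(\omega_i):=\alpha_iE_i(\omega_i)$, $\phi(\omega_i):=\phi_i(\omega_i)$ with the $\alpha_i>0$ small enough to preserve the gamma-radonifying property of $K_E$. The resulting $(E,\phi)$ satisfies the hypotheses of the semigroup version of Proposition \ref{background}, which yields (1) and (2) of Theorem \ref{THMSG}. The only step requiring a moment of care is the strong-mixing inductive construction on $\RR$: since measures on $\RR$ are not automatically tight, one should start with a \emph{bounded} open set $V_\emptyset\subset\RR$ (any bounded neighbourhood of the initial eigenvalue $\theta_0\in\pi_2(\mathbf Z)$) so that all the measures appearing in the induction have supports in a fixed compact set, making $w^*$-compactness arguments in $\mathcal M(\RR)$ applicable; this is the main (but rather minor) obstacle, and everything else follows the discrete scheme line by line.
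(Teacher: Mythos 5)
Your proposal is correct and follows exactly the route the paper intends: the authors themselves give no details for Theorem \ref{THMSG}, stating only that the proof of Theorem \ref{WS} adapts by replacing $\TT$ with $\RR$, $T^n$ with $T_t$, and unimodular eigenvalues of $T$ with imaginary eigenvalues of the generator $A$, which is precisely the substitution scheme you carry out through the eigenfield/intertwining machinery of Proposition \ref{background} and the Cantor-type constructions of sections \ref{WMcase} and \ref{proofabstract1}. Your closing observation --- that one should confine the frequency coordinate to a bounded open set $V_\emptyset\subset\RR$ so that all measures in the inductive construction live on a fixed compact set and the $w^*$-continuity arguments for the semi-norms $\Phi_k$ remain valid --- is the one genuine point the paper's ``matter of changing the notation'' remark glosses over, and you handle it correctly.
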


Since the proof would essentially be a matter of changing the notation, we shall not give any detail. We note, however, that the semigroup case is obviously of some interest in view of its connections 
with partial differential equations. See \mbox{e.g.} \cite{BK}, \cite{Las} or \cite{R2} for more on these matters.

\smallskip
In another direction, a perhaps ambitious program would be to consider linear representations of more general semigroups; in other words, to establish results like Theorem \ref{WS} for semigroups of 
operators $(T_\gamma)_{\gamma\in\Gamma}$ with $\Gamma$ no longer equal to $\NN$ or $\RR_+$. There is no difficulty in defining ergodicity or strong mixing in this setting, and the spectral approach makes sense \mbox{e.g.} if $\Gamma$ is a locally compact abelian group. However, it is not clear what the correct ``perfect spanning" property should be.

\smallskip
\subsubsection{Translation semigroups} 
To illustrate Theorem \ref{THMSG}, let us give a new and very simple example of a strongly mixing $C_0$-semigroup, that cannot be reached by applying the results of \cite{BG3}, \cite{BG2} or \cite{BM}. 

\smallskip
Let $\rho:\mathbb R_+\to(0,\infty)$ be a locally bounded positive function on $\RR_+$, and let $$\mathcal C_{0}(\RR_+,\rho):=\left\{f\in \mathcal C(\mathbb R_+); \lim_{x\to+\infty} f(x)\rho(x)=0\right\}$$ endowed with 
 its natural norm, $\|f\|=\|f \rho\|_\infty$. Moreover, assume that $\rho$ is an \emph{admissible weight} in the sense of \cite{DSW}, which means that $$C(t):=\sup_{x\in\RR_+}\frac{\rho(x)}{\rho(x+t)}<\infty$$ for any $t\geq 0$ and $C(t)$ is locally bounded on $\RR_+$. Then the \emph{translation semigroup} $\mathcal T=(T_t)_{t\geq 0}$ defined by 
 $$T_tf(x)= f(x+t)$$ is a $C_0$-semigroup on $\mathcal C_{0}(\RR_+,\rho)$.
 
 \smallskip
 It is proved in \cite{BBCP} that $\mathcal T$ is strongly mixing in the topological sense if and only if $\rho(x)\xrightarrow{x\to\infty} 0$ as $x\to\infty$. We now show that this is in fact equivalent to strong mixing in the Gaussian sense.

\begin{proposition} The translation semigroup $\mathcal T$ is strongly mixing in the Gaussian sense on $\mathcal C_{0}(\RR_+,\rho)$ if and only if
$\rho(x)\to 0$ as $x\to+\infty$.
 \end{proposition}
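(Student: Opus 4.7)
The plan is to handle the two implications separately, with Theorem \ref{THMSG}(2) doing the work on the substantive side.

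For the \emph{only if} direction, I would first observe that if $\mathcal T$ is strongly mixing with respect to some Gaussian measure $\mu$ of full support, then for any nonempty open $U,V\subset X$ one has $\mu(U),\mu(V)>0$ and $\mu(U\cap T_t^{-1}(V))\to\mu(U)\mu(V)>0$ as $t\to\infty$, so $U\cap T_t^{-1}(V)\neq\emptyset$ for all $t$ large. Hence $\mathcal T$ is topologically mixing, and the result of \cite{BBCP} cited just above forces $\rho(x)\to 0$.

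For the \emph{if} direction, assume $\rho(x)\to 0$. Since $\rho$ is locally bounded and vanishes at infinity, it is in fact bounded on $\RR_+$. The generator of $\mathcal T$ is $A=d/dx$, and for each $\theta\in\RR$ the function $E(\theta):x\mapsto e^{i\theta x}$ satisfies $|E(\theta)(x)\rho(x)|=\rho(x)\to 0$, so $E(\theta)\in X:=\mathcal C_0(\RR_+,\rho)$ and $AE(\theta)=i\theta\, E(\theta)$. A routine ``split at a large $M$'' argument, using that $|e^{i\theta x}-e^{i\theta_0 x}|\leq |\theta-\theta_0|\, x$ on $[0,M]$ and $\rho$ is small past $M$, shows $E:\RR\to X$ is continuous.

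The crucial step is the $\mathcal U_0$-perfect spanning property. Fix a Borel set of extended uniqueness $D\subset\RR$; by Hahn--Banach, it suffices to check that every $\phi\in X^*$ vanishing on $\{E(\theta):\theta\in\RR\setminus D\}$ is zero. The isometric isomorphism $f\mapsto f\rho$ from $X$ onto $\mathcal C_0(\RR_+)$ identifies $X^*$ with the space of bounded regular Borel measures $\nu$ on $\RR_+$, the pairing being $\phi(f)=\int f\rho\, d\nu$. Since $\rho$ is bounded, $\rho\, d\nu$ is a finite measure on $\RR_+$ which I view as a finite measure on $\RR$ by trivial extension. The hypothesis on $\phi$ reads
\[
\widehat{\rho\nu}(-\theta)=\int_{\RR_+}e^{i\theta x}\rho(x)\, d\nu(x)=0\qquad(\theta\in\RR\setminus D).
\]
Now $D$ has Lebesgue measure zero: on $\RR$, the restriction of Lebesgue measure to any bounded interval is a Rajchman measure (its Fourier transform is $(e^{ib\xi}-e^{ia\xi})/(i\xi)$), so it must annihilate $D$. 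Since $\widehat{\rho\nu}$ is a continuous function vanishing off a Lebesgue-null set, it vanishes on a dense subset of $\RR$ and hence identically. Injectivity of the Fourier transform on finite measures gives $\rho\nu=0$, and since $\rho>0$ everywhere, $\nu=0$, so $\phi=0$. This establishes $\mathcal U_0$-perfect spanning of the purely imaginary eigenvectors of $A$, and Theorem \ref{THMSG}(2) concludes.

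The only real point of care is the duality identification and the observation that continuity of $\widehat{\rho\nu}$ together with the Lebesgue-nullity of any $\mathcal U_0$-set on $\RR$ makes the argument almost automatic — we actually obtain the spanning property relative to every Lebesgue-null set, which is formally stronger than needed. No genuine obstacle arises once the framework of Theorem \ref{THMSG} is in place.
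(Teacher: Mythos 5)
Your proof is correct and follows essentially the same route as the paper's: the ``only if'' direction comes from full support plus the topological characterization of \cite{BBCP}, and the ``if'' direction from exhibiting the continuous eigenvector field $\theta\mapsto e^{i\theta\,\cdot}$ and deducing the $\mathcal U_0$-spanning property from the injectivity of the Fourier transform before invoking Theorem \ref{THMSG}. The paper is terser (it merely asserts density of the span of all $e_\theta$ and lets the continuity of the field handle the removal of a $\mathcal U_0$-set), but your explicit verification -- using that $\mathcal U_0$-subsets of $\RR$ are Lebesgue-null, so the continuous function $\widehat{\rho\nu}$ vanishes on a dense set -- is the same argument in substance.
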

\begin{proof} Assume that $\rho(x)\to 0$ as $x\to+\infty$. The infinitesimal generator of $\mathcal T$ is the derivation operator (denoted by $A$), whose domain includes all $\mathcal C^1$ functions on $\RR_+$ 
with a bounded and uniformly continuous derivative. For any $\theta\in\RR$, the function $e_{\theta}(x)=e^{i\theta x}$ is in $\ker(A-i\theta)$, and the map
$\theta\mapsto e_{\theta}$ is clearly continuous from $\RR$ into $\mathcal C_{0}(\RR_+,\rho)$. Moreover, it follows easily from (the Hahn-Banach theorem and) the injectivity of the Fourier transformation that the linear span of the functions $e_{\theta}$ is dense in $\mathcal C_{0}(\RR_+,\rho)$. By Theorem \ref{THMSG}, we conclude that $\mathcal T$ is strongly-mixing in the Gaussian sense.
\end{proof}

\begin{remark*} One may also consider the weighted $L^p$ spaces $L^p(\RR_+,\rho)$ ($1\leq p<\infty$) defined by the condition
$$\int_0^\infty \vert f(x)\vert^p\, \rho(x)\, dx<\infty\, .$$
With exactly the same proof as above, one gets that the translation semigroup is strongly mixing in the Gaussian sense on $L^p(\RR_+,\rho)$ as soon as 
$$\int_0^\infty \rho (x)\, dx<\infty\, .$$
\end{remark*}

\subsection{The size of the set of hypercyclic vectors}

It is well known that if $T$ is a hypercyclic operator acting on a separable Fr\'echet space $X$, then $HC(T)$ (the set of hypercyclic vectors for $T$) is a dense $G_\delta$ subset of $X$. Moreover, as observed in the introduction, if 
$T$  happens to be ergodic with respect to some probability measure $\mu$ with full support, then $\mu$-almost every $x\in X$ is a hypercyclic vector for $T$. Thus, $HC(T)$ is large both in the Baire category sense and in a measure-theoretic sense.

\smallskip
Now, there are many other natural notions of ``largeness" in analysis. A quite popular one is that of \emph{prevalence}, which is discussed at length in \cite{HSY}. In a Polish abelian group $G$, a set is prevalent if its complement $A$ is Haar-null in the 
sense of Christensen \cite{Chr}, \mbox{i.e.} one can find a Borel probability measure $\nu$ on $G$ such that $\nu (A+g)=0$ for every $g\in G$. Some results concerning prevalence and hypercyclicity are proved in \cite{BMM}, and much more spectacular results 
regarding the size of the set of hypercyclic vectors are to be found in \cite{GR}.

\smallskip
For some reasons, it is not incongruous to expect that if an operator $T$ is ergodic in the Gaussian sense, then $HC(T)$ is \emph{not} prevalent and even Haar-null. We are not able to prove this, but this is indeed true for a large class of ergodic weighted shifts, as shown by the following result.

\begin{proposition}\label{PROPHAARNULL}
Let $X$ be a Banach space, and let $T\in\mathcal L(X)$. Assume that one can find $u\in X$ such that $$\sum_{n=0}^\infty \frac1{\|T^n (u)\|}<\infty\, .$$
 Then $HC(T)$ is Haar-null.
\end{proposition}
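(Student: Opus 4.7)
The plan is to exhibit a transverse Borel probability measure witnessing directly that $HC(T)$ is Haar-null. First note that the hypothesis forces $u\neq 0$. Let $\nu$ be the push-forward onto $X$ of the normalized planar Lebesgue measure on the unit disk $\{z\in\CC:|z|\leq 1\}$ under the continuous map $z\mapsto zu$; this is a Borel probability measure on $X$. Since $X$ must be separable (any hypercyclic vector has a countable dense orbit; otherwise $HC(T)=\emptyset$ and there is nothing to prove), we have $HC(T)=\bigcap_{k}\bigcup_n T^{-n}(V_k)$ for a countable basis $(V_k)$ of nonempty open sets, so $HC(T)$ is a Borel (in fact $G_\delta$) set and $\nu(HC(T)+g)$ makes sense for every $g\in X$. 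It is enough to show that this quantity vanishes; after replacing $g$ by $-g$, this amounts to proving that, for every $g\in X$, the set
$$A_g:=\{z\in\CC:zu+g\in HC(T)\}$$
has two-dimensional Lebesgue measure zero.

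The key estimate comes from Hahn--Banach. For each $n\geq 0$, pick $x_n^*\in X^*$ with $\|x_n^*\|=1$ and $x_n^*(T^n u)=\|T^n u\|$. If $z\in A_g$ then the orbit of $zu+g$ is dense in $X$, so $T^n(zu+g)\in B_X(0,1)$ for infinitely many $n$. Writing $T^n(zu+g)=zT^n u+T^n g$, the condition $\|zT^n u+T^n g\|<1$ then forces
$$\bigl|z\,\|T^n u\|+x_n^*(T^n g)\bigr|<1,$$
which means that $z$ lies in the disk $\Delta_n(g)\subset\CC$ centred at $-x_n^*(T^n g)/\|T^n u\|$ with radius $1/\|T^n u\|$. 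Hence
$$A_g\ \subset\ \bigcap_{N\geq 0}\bigcup_{n\geq N}\Delta_n(g).$$

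To finish, I would invoke the Borel--Cantelli lemma for planar Lebesgue measure. The disk $\Delta_n(g)$ has area $\pi/\|T^n u\|^2$; the summability $\sum_n 1/\|T^n u\|<\infty$ forces $\|T^n u\|\to\infty$, so $1/\|T^n u\|^2\leq 1/\|T^n u\|$ for all sufficiently large $n$, and therefore $\sum_n \pi/\|T^n u\|^2<\infty$. Borel--Cantelli then gives that the $\limsup$ above has two-dimensional Lebesgue measure zero, and consequently so does $A_g$. Since $g\in X$ was arbitrary, $\nu(HC(T)+g)=0$ for every $g\in X$, which is the definition of Haar-nullness. There is no serious obstacle here; the only idea worth singling out is that the Hahn--Banach functional $x_n^*$ collapses the a priori infinite-dimensional constraint $\|zT^n u+T^n g\|<1$ to a one-dimensional planar estimate on $z$, which is precisely what makes the Borel--Cantelli bookkeeping possible.
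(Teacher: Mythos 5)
Your proof is correct and follows essentially the same route as the paper's: both use the line through $u$ as a probe (equivalently, a transverse measure supported on that line) and convert the fact that a hypercyclic orbit visits the unit ball infinitely often into a scalar constraint confining the parameter to sets of radius $1/\|T^n u\|$, which the summability hypothesis forces to be Lebesgue-null. The only cosmetic differences are that the paper works with the real line $\RR u$ and the reverse triangle inequality, where you use the complex disk, a Hahn--Banach norming functional, and a Borel--Cantelli bound on areas.
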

\begin{proof} Considering only the real-linear structure of $X$, we may assume that $X$ is a real Banach space.

\smallskip
An efficient way of proving that a set $A\subset X$ is Haar-null is to exhibit some finite-dimensional subspace $V$ of $X$ such that
$$\forall x\in X\;\forall^{a.e.} v\in V\; :\; x+v\not\in A\, ,$$
where $\forall^{a.e.}$ refers to Lebesgue measure on $V$. 
In the terminology of \cite{HSY}, such a subspace $V$ is called a \emph{probe} for $A$.

We show that the one-dimensional subspace $V=\RR u$ is a probe for $HC(T)$. So let $x\in X$ be arbitrary, set 
$\Lambda:=\{\lambda\in\RR;\ x+\lambda u\in HC(T)\}$, and let us prove that $\Lambda$ has Lebesgue measure $0$.

For any $\lambda\in \Lambda$, one can find arbitrary large $n\in\NN$ such that
$$\|T^n (x)+\lambda T^n (u)\|\leq 1,$$
and hence such that $$\left\vert  \lambda-({\|T^n(x)\|}/{\|T^n (u)\|})\right\vert\leq \frac1{\|T^n (u)\|}\,\cdot$$
Putting $a_n:= {\|T^n(x)\|}/{\|T^n (u)\|}$, it follows that
$$|\lambda|\in\bigcap_{N\in\NN}\, \bigcup_{n\geq N}\left[a_n-\frac1{\|T^n (u)\|},a_n+\frac1{\|T^n (u)\|}\right]$$
for every $\lambda\in\Lambda$. In particular, the Lebesgue measure of $\Lambda$ is not greater than
$$\inf_{N\in\NN} \; 2 \sum_{n\geq N}\frac1{\|T^n u\|}\, ,$$
which is a complicated way to write $0$.
\end{proof}

\begin{corollary} Let $B_{\mathbf w}$ be a weighted backward shift on $X_p=\ell^p(\NN)$, $1\leq p<\infty$ or $X_\infty =c_0(\NN)$. Assume that the weight sequence $\mathbf w=(w_n)_{n\geq 1}$ satisfies
$$\sum_{n=1}^\infty \frac{1}{\vert w_1\cdots w_n\vert^{p/p+1}}<\infty\, .$$
Then $HC(B_{\mathbf w})$ is Haar-null. This holds for example if $\liminf\limits_{n\to\infty} \vert w_n\vert>1$.
\end{corollary}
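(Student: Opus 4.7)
The plan is to reduce the corollary to Proposition~\ref{PROPHAARNULL} by building, for each of the spaces $X_p$, a single explicit $u \in X_p$ with $\sum_n 1/\|B_{\mathbf w}^n u\|<\infty$. Writing $\beta_k:=|w_1\cdots w_k|$ (with $\beta_0=1$) and expanding $u=\sum_{k\geq 0}a_k e_k$ in the canonical basis, the shift formula
$$B_{\mathbf w}^n u=\sum_{m\geq 0}a_{m+n}\,(w_{m+1}\cdots w_{m+n})\,e_m$$
gives the crude but crucial lower bound $\|B_{\mathbf w}^n u\|\geq |a_n|\beta_n$ (in every $X_p$, including $c_0$, by looking at the $m=0$ coordinate). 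The whole game is to pick $a_k$ so that $u$ sits in the space and $|a_n|\beta_n$ grows fast enough.

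For $1\leq p<\infty$, I will set $a_k:=\beta_k^{-1/(p+1)}$. Then $|a_k|^p=\beta_k^{-p/(p+1)}$, so the hypothesis $\sum_k \beta_k^{-p/(p+1)}<\infty$ says both that $u\in\ell^p(\NN)$ and, via the lower bound $\|B_{\mathbf w}^n u\|_p\geq \beta_n^{p/(p+1)}$, that $\sum_n 1/\|B_{\mathbf w}^n u\|_p<\infty$. So Proposition~\ref{PROPHAARNULL} applies.

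For $X_\infty=c_0(\NN)$ (formal $p=\infty$, hypothesis $\sum_n 1/\beta_n<\infty$, whence $\beta_n\to\infty$), a naive single-parameter choice $a_k=\beta_k^{\varepsilon-1}$ is not robust enough: one can build weight sequences (with $\beta_n\sim n(\log n)^2$ for instance) where no such fixed power works. I expect this to be the one step requiring some actual care. My plan is to use a tail-of-series construction. Let $R_n:=\sum_{k\geq n}1/\beta_k$, which is positive and strictly decreases to $0$, and take $a_k:=\sqrt{R_k}$. Then $a_k\to 0$, so $u\in c_0$, and concavity of $\sqrt{\,\cdot\,}$ gives $\sqrt{R_n}-\sqrt{R_{n+1}}\geq (R_n-R_{n+1})/(2\sqrt{R_n})$, i.e.
$$\frac{1}{|a_n|\,\beta_n}=\frac{R_n-R_{n+1}}{\sqrt{R_n}}\leq 2\bigl(\sqrt{R_n}-\sqrt{R_{n+1}}\bigr),$$
a telescoping upper bound with total mass at most $2\sqrt{R_0}<\infty$. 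Combined with $\|B_{\mathbf w}^n u\|_\infty\geq |a_n|\beta_n$, this once again lets Proposition~\ref{PROPHAARNULL} conclude.

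Finally, when $\liminf|w_n|>1$, one has $\beta_n\gtrsim \rho^n$ for some $\rho>1$, so $\sum_n \beta_n^{-p/(p+1)}$ (and $\sum_n 1/\beta_n$ in the $c_0$ case) converge geometrically, and the previous paragraphs apply; this disposes of the stated special case. The only real subtlety in the whole argument is the $c_0$ step, where the tail trick is needed because the hypothesis $\sum 1/\beta_n<\infty$ just barely allows one to insert an auxiliary factor tending to infinity.
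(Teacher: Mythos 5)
Your proof is correct and follows essentially the same route as the paper: both reduce to Proposition \ref{PROPHAARNULL} by exhibiting $u=\sum_k \beta_k^{-1/(p+1)}e_k$ (where $\beta_k=\vert w_1\cdots w_k\vert$) for $p<\infty$, and $u=\sum_k (A_k/\beta_k)e_k$ with $A_k=o(\beta_k)$ and $\sum_k 1/A_k<\infty$ for $c_0$, using the lower bound $\Vert B_{\mathbf w}^n u\Vert\geq \vert a_n\vert\beta_n$ coming from the zeroth coordinate. The only difference is that you make the auxiliary sequence in the $c_0$ case explicit (taking $A_k=\beta_k\sqrt{R_k}$ and telescoping), whereas the paper merely asserts that some such sequence exists.
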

\begin{proof} Set $w_0:=0$ and denote by $(e_k)_{k\geq 0}$ the canonical basis of $X_p$. If $p<\infty$, consider the vector 
$$u:=\sum_{k=0}^\infty \frac{1}{\vert w_0\cdots w_k\vert^{1/p+1}}\, e_k\, .$$
If $p=\infty$, put 
$$u:=\sum_{k=0}^\infty \frac{A_k}{w_0\cdots w_k}\, e_k\, ,$$
where $(A_k)$ is any sequence of positive numbers such that $A_k=o(w_0\cdots w_k)$ and $\sum_0^\infty 1/A_k<\infty$.
\end{proof}

\smallskip
Thus, we see that if $B$ is the usual (unweighted) backward shift, then $HC(\lambda B)$ is Haar-null in $X_p$ for any $p\in[1,\infty]$ if $\vert\lambda\vert>1$.

\subsection{Some questions} To conclude the paper, we list a few ``natural" questions, most of which have already been raised.

\begin{enumerate}
\item Is Theorem \ref{abstract} true without assuming that the family $\mathbf S$ is $c_0$-like?
\item Is Theorem \ref{abstracteasy} true for $L^1(m)$-mixing, or in the mild mixing case?
\item Let $T$ be a hypercyclic operator on $X$ whose $\TT$-eigenvectors span a dense subspace of $X$. Is $T$ ergodic in the Gaussian sense? Is $T$ frequently hypercyclic?
\item\label{truc} Let $T\in\mathcal L(X)$, and assume that for any set $D\subset\TT$ \emph{with empty interior}, the linear span of $\bigcup_{\lambda\in\TT\setminus D}\ker(T-\lambda)$ is dense in $X$. Is it possible to find 
a countable family of continuous $\TT$-eigenvector fields $(E_i)_{i\in I}$, where each $E_i$ is defined on some nontrivial closed arc $\Lambda_i\subset\TT$, such that ${\rm span}\left(\bigcup_{i\in I} E_i(\Lambda_i)\right)$ is dense in $X$?
\item Let $H$ be a Hilbert space, and let $T\in\mathfrak L(H)$ be mixing with respect to some Borel measure on $H$ with full support. Is $T$ mixing in the Gaussian sense?
\item On which Banach spaces is it possible to find ergodic operators (in the Gaussian sense) with no eigenvalues? The space $X$ should of course not have cotype 2, but this is not enough: for example, if $X$ is hereditarily indecomposable 
then there are no frequently hypercyclic operators at all on it, and hence no ergodic operators either (\cite{Shk}). In fact, the question splits into two separate problems: (i) on which Banach spaces is it possible to find ergodic operators? (ii) what can be said if the space does not have cotype 2? We refer to \cite{GRosetc} for general results regarding the first problem.
\item Is there an ergodic weighted shift on $c_0(\NN)$ with no unimodular eigenvalues?
\item Let $T\in\mathfrak L(X)$ be ergodic in the Gaussian sense. Are the ergodic measures with full support dense in the set of all $T$-invariant measures (endowed with the usual Prokhorov topology)? See \cite{Sig} for a positive answer in a completely 
different situation, and \cite{CS} for more in that direction.
\item Is it true that if $T$ is an ergodic operator, then $HC(T)$ is Haar-null? Is this true at least for weighted shifts?
\end{enumerate}

\end{document}